\def\elsartstyle{%
    \def\normalsize{\@setfontsize\normalsize\@xiipt{14.5}}
    \def\small{\@setfontsize\small\@xipt{13.6}}
    \let\footnotesize=\small
    \def\large{\@setfontsize\large\@xivpt{18}}
    \def\Large{\@setfontsize\Large\@xviipt{22}}
    \skip\@mpfootins = 18\p@ \@plus 2\p@
    \normalsize
} \@ifundefined{square}{}{}
\newtheorem{theorem}{Theorem}[section]
\newtheorem{assp}{Assumption}
\newtheorem{lemma}[theorem]{Lemma}
\newtheorem{rem}[theorem]{Remark}
\newtheorem{expl}[theorem]{Example}
\newtheorem{cor}[theorem]{Corollary}
\makeatletter \@addtoreset{equation}{section}
\newcommand{\E}{\mathbb{E}}
\newcommand{\PP}{\mathbb{P}}
\newcommand{\RR}{\mathbb{R}}
\newcommand{\Se}{\mathbb{S}}
\newcommand{\dis}{\displaystyle}
\def\nn{\nonumber}
\def\F{{\cal F}}
\def\trace{\hbox{\rm trace}}
 \def\f{\varphi} \def\r{\rho}\def\e{\varepsilon}
\def\lf{\left} \def\rt{\right}\def\t{\triangle} \def\ra{\rightarrow}
\def\la{\label}\def\be{\begin{equation}}  \def\ee{\end{equation}}
\def\SS{{\mathbb{S}}}
\begin{document}
%=========================== file beginning ================================================

%\renewcommand\baselinestretch{1.2}
% \vspace {1cm}

\title{Explicit approximations for nonlinear switching diffusion systems in finite and infinite horizons\thanks{This work was supported by National Natural Science Foundation of China (11171056, 11471071, 11671072), the Natural Science Foundation of Jilin Province (No.20170101044JC), the Education Department of Jilin Province (No.JJKH20170904KJ).}}
\author{Hongfu Yang, Xiaoyue Li\thanks{Corresponding author,~~Email address: lixy209@nenu.edu.cn. (Xiaoyue Li)}}%\thanks{School of Mathematics and Statistics, Northeast Normal University, 5268 Renmin St., Changchun, Jilin, 130024, China (yanghf783@nenu.edu.cn).}
 \date{School of Mathematics and Statistics, Northeast Normal University,
Changchun, Jilin, 130024, China}
\maketitle

\begin{abstract}
Focusing on hybrid diffusion dynamics involving continuous dynamics as well as discrete events, this article investigates the explicit approximations for nonlinear switching diffusion systems   modulated by a Markov chain. Different kinds of easily implementable explicit schemes have been proposed to approximate the dynamical behaviors of switching diffusion systems with local Lipschitz continuous drift and diffusion coefficients in both finite and infinite intervals. Without additional restriction conditions except those which guarantee the exact solutions posses their dynamical properties, the numerical solutions converge  strongly to the exact solutions in finite horizon, moreover, realize the approximation of long-time dynamical properties including the moment boundedness, stability and ergodicity. Some simulations and examples are provided to support the theoretical results and demonstrate the validity of the approach.

\vskip 0.2 in
\noindent
{\bf Keywords:} Explicit scheme; Switching diffusion systems; Local Lipschitz condition; Strong convergence; Stability; Invariant measure
\end{abstract}

\section{Introduction}\label{s-w}

The switching diffusion systems (SDSs) modulated by  Markov chains involving
   continuous dynamics and discrete events, have drawn more and more attention to many researches. Much of the study originated from applications arising from
  biological systems, financial engineering, manufacturing systems, wireless communications  (see, e.g., \cite{Shen2013, Mao06, So, Ta, yz09}  and the references therein). Compared with those of the subsystems the  dynamics of SDSs are seemingly   much different. For instance, considering a  predator-prey ecosystem switching between two environments randomly,  Takeuchi et al. in \cite{Ta} revealed that both subsystems  develop periodically but switching between them makes them neither permanent nor dissipative. Pinsky and his coauthors in \cite{Pin1, Pin2} provided several interesting examples to show that the switching system is recurrent (resp. transient) even if its subsystems are transient (resp. recurrent). Due to the coexistence of continuous dynamics and discrete events, the dynamics of SDSs  are  full of uncertainty  and challenge.

Since solving SDSs is almost unavailable, numerical scheme or approximation techniques  become   viable alternatives.
The explicit  Euler-Maruyama (EM)  scheme  is  popular for approximating  diffusion systems   and   SDSs
 with global Lipschitz  coefficients \cite{Mao06,  Kloeden, yz09}. However, the coefficients of many important diffusion systems and SDSs are only locally Lipschitz  and  superlinear (see, e.g., \cite{Hutzenthaler15, 16, yz09, So} and the references therein).  Hutzenthaler et al. \cite[Theorem 2.1]{16} showed that the  absolute moments of the EM approximation for a large class of diffusion systems with superlinear  growth coefficients  diverge to infinity at a finite time point $T\in (0, \infty)$. The implicit EM scheme is better than the explicit EM scheme in that its numerical solutions  converge strongly to the exact solutions of diffusion and switching diffusion systems with the one-sided Lipschitz drift coefficient and global Lipschitz diffusion coefficient (see Higham et al. \cite{Hi02}, Mao and Yuan \cite[p.134-153]{Mao06}). Nevertheless, additional computational efforts are required for its implementation since the solution of an algebraic equation has to be found  before each iteration.
Due to the    advantages of explicit schemes (e.g., simple  structure and cheap computational cost),  a few modified EM methods
have been developed for   diffusion systems with nonlinear  coefficients including the tamed EM method  \cite{Hutzenthaler12,Hutzenthaler15,Sabanis13,Sabanis16},   the tamed Milstein method \cite{Wang13},  the stopped EM method \cite{LiuW13} and the truncated EM method  \cite{Mao20152}. These modified EM methods have shown their abilities to approximate the solutions of nonlinear diffusion systems.
 However, to the best of our knowledge these  methods are not developed, even unavailable for a large class of nonlinear SDSs.
 For instance, Hamilton \cite{James}  remarked that the economy may either be in a fast growth or slow growth phase  with the regime switching   governed by the outcome of a Markov   chain.  Consider a two-dimensional nonlinear stochastic volatility model  switching randomly between   a fast growth  phase
\begin{align}\la{sex1}
\mathrm{d}X(t)=2.5X(t)\Big(1-|X(t)|\Big)\mathrm{d}t+\left(
  \begin{array}{ccc}
    -1 & \sqrt{2}\\
     \sqrt{2}  & 1\\
  \end{array}
\right)|X(t)|^{3/2}\mathrm{d}B(t),
\end{align}
 and a slow growth phase
\begin{align}\la{sex2}
\mathrm{d}X(t)= \Big((1, 2)^T- X(t)\Big)\mathrm{d}t+ \left(
  \begin{array}{ccc}
    0.2 & -0.5\\
    1 &  0.4\\
  \end{array}
\right)|X(t)|\mathrm{d}B(t),~~~~~~
\end{align}
modulated by a Markov chain $r(t)$.
This SDS
is one of popular volatility models  used for  pricing option in Finance  (see, e.g., \cite{James, Shen2013, Goard, So} and the references therein),
especially, for pricing VIX options.  Based on its importance and no closed-form,
one of our aims is to construct the explicit scheme available for this kind of  SDSs and to study its convergence in the $p$th moment as well as the rate of convergence.

  On the other hand,  long-time behaviors of SDSs are also the major concerns in stochastic processes, systems theory, control  and optimization (see, e.g., the monographs \cite{Mao06,yz09} and the references therein).
 So far, the dynamical properties of SDSs are investigated deeply including stochastic stability (see, e.g., \cite{Mao06,yz09,Khasminskii2007,shao}), invariant densities (see, e.g., \cite{Bakhtin12,Bakhtin15}),  recurrence and transience (see, e.g., \cite{yz09, Shao15}) and so on.  Although the finite-time convergence is one of the fundamental concerns,  how to approximate  long-time behaviors of SDSs  is   significant and challenging.
Recently,  Higham et al. \cite{Higham07} by using the EM scheme, Mao and Yuan \cite[pp.229-249]{Mao06}  by using the backward EM (BEM) scheme showed respectively that  the numerical solutions  preserve the mean-square exponential stability of SDSs with  globally  Lipschitz continuous coefficients.   Mao et al. \cite{Mao2011} gave a counter-example  that the EM numerical solutions  don't share the underlying
almost surely exponential stability of  SDSs  with the nonlinear growth drift term,
but the  BEM numerical solutions do.
Mao et al. \cite{Mao2005}, Yuan and Mao \cite{Yuan3} and  Bao et al. \cite{Bao2016} made use of  the EM  method with a constant stepsize to approximate the underlying invariant measure of SDSs with linear growth  coefficients while
Yin and Zhu \cite[p.159-179]{yz09} did that by using  the EM scheme with the  decreasing step sizes.
  In the above mentioned works,  the diffusion coefficients of   SDSs are always required to be globally Lipschitz continuous.   For the further development of numerical schemes for SDSs, we refer  readers to \cite{Nguyen17}, for example, and the references therein. %Furthermore,
%by utilizing the EM scheme and modified Milstein scheme respectively, Feng et al. \cite{Feng17} approximated the random  periodic solutions of periodic diffusion systems with globally Lipschitz continuous coefficients, and    revealed that the numerical periodic measures weakly converge to the underlying periodic measure.
  Accommodating  many applications although these systems are more realistically addressing the demands, the nontraditional setup makes the discrete approximations of the long-time behaviors  for    nonlinear SDSs  more difficult.
  Thus,  in order to close the  gap, the other  aim  is  to construct an appropriate explicit  scheme   for  a large class of nonlinear SDSs  such that the numerical solutions realize  the underlying infinite-time dynamical properties, such as the $p$th moment ($p>0$) boundedness,  stability   as well as  approximate the underlying  invariant measure.

 Motivated by  the truncation idea in \cite{Mao20152},  together with the novel approximation technique, we have constructed a new  explicit scheme  and get the convergence with $1/2$ order rate  in  finite  time interval.  Then  we go further to improve the scheme according to the structure of the SDSs %condition of {ergodicity (resp. stability)}
  such that it is easily  implementable for approximating    the underlying invariant measure{ (resp. stability)}.
 The schemes proposed in this paper  are obviously different from those  of
  \cite{Mao06,Sabanis13, Mao20152,Zhou2015}.
  More precisely, the numerical solutions at the grid points are modified before each iteration according to the growth rate of the drift and diffusion coefficients such that the numerical solutions keep  the underlying excellent properties of the exact solutions of SDSs. Our contributions  are as follows.
\begin{itemize}
\item[$\bullet$]  We construct an easily implementable scheme  for the SDSs with only local Lipschitz  drift and diffusion coefficients and establish finite-time  moment convergence results.  The rate of convergence is also estimated under slightly stronger conditions.

\item[$\bullet$] Using novel techniques (i.e., combining the Lemma \ref{L:1} with the Perron-Frobenius theorem (see \cite[p.6]{Chen2007}) to construct appropriate Lyapunov functions depending on the states and analyzing their asymptotic properties),
 we obtain the criterion on the existence and uniqueness of  invariant measures of the SDSs as well as the $p$th moment ($p>0$) and the almost surely exponential stability; see  Theorem  \ref{yth3.1} and Corollary \ref{CT:F_0}.

\item[$\bullet$]  We reconstruct the explicit scheme which numerical solutions admit a unique  invariant measure  converging  to the underlying one in the Wasserstein distance.  The restrictions on the  coefficients by using the EM scheme
(c.f. \cite{Yuan3,Bao2016,Liu}) are relaxed. Thus this scheme is more suitable for the measure approximation of   nonlinear SDSs.

%\item[$\bullet$] We go a further step to construct another explicit scheme such that its numerical solutions keep  the underlying stability in the $p$th moment  and in probability $1$.
% Compared with those in \cite{Pang2008,Mao2011, Zong}, the new scheme   is more easily implementable and wider applications since the restrictions on the coefficients required by
% \cite{Pang2008,Mao2011,Zong} are relaxed.

 \item[$\bullet$]  Without extra restrictions the numerical solutions of the appropriate  explicit scheme   stay in step of dynamical properties with the exact solutions.
\end{itemize}

The rest of the paper is arranged as follows. Section \ref{n-p} begins with notations and preliminaries on  the properties of the exact solutions. Section \ref{s-c}  constructs an explicit scheme,  and yields the convergence in $p$th moment and the  rate.  Two examples are given to illustrate  the availability of this scheme.
 Section \ref{Inv} focuses on the analysis of  invariant measures.
The other explicit scheme is constructed preserving the stability in distribution, which numerical invariant measure approximates  the underlying one in the Wasserstein distance.
 Several numerical experiments are presented  to illustrate the results.
 Section \ref{s6} gives some concluding remarks to conclude the paper.

%Then we reconstruct  an explicit scheme preserving the underlying stability and invariant measure.  Then we go further to obtain the existence and uniqueness of the invariant measure of the exact and numerical solutions for nonlinear SDSs, and   reveals that the numerical invariant measure  converges to the underlying exact one in the Wasserstein distance.
%Moreover, we present two examples and numerical experiments to illustrate our results. Section \ref{s6} gives some concluding remarks to conclude the paper.

\section{Notations and preliminaries}\label{n-p}
Throughout this paper,  we  use the following notations. Let $d$,  $m$ and $n$ denote finite positive integers,  $|\cdot|$ denote the Euclidean norm in $\RR^n:=\RR^{n\times 1}$ and the trace norm in $\RR^{n\times d}$.
 If $A$ is a vector or matrix, its transpose is denoted by $A^T$ and its trace norm is denoted by $|A|=\sqrt{\mathrm{trace}(A^TA)}$.  For vectors  or matrixes $A$ and $B$ with compatible dimensions, $A B$ denotes the usual matrix multiplication. If $A$ is a symmetric matrix, denote by $\lambda_{\max}(A)$ and $\lambda_{\min}(A)$ its largest and smallest eigenvalue, respectively.
 For any
$c=(c(1), \dots, c(m))\left(\mathrm{ or }=(c_1, \dots, c_m)\right)$
 define $\dis\hat{c}=\min_{1\leq i\leq m}c(i)  (   c_i$) and $\dis\check{c}=\max_{1\leq i\leq m}c(i)( c_i)$. For any $a, b\in \mathbb{R}$,
 $a\vee b:=\max\{a,b\}$, and $a\wedge b:=\min\{a,b\}$.
If $\mathbb{D}$ is a set, its indicator function is denoted by $I_{\mathbb{D}}$, namely $I_{\mathbb{D}}(x)=1$ if $x\in \mathbb{D}$ and $0$ otherwise.  Let $C_i$ and $C$ denote two generic positive real constants respectively, whose value may change in different appearances, where $C_i$ is  dependent on $i$ and $C$ is independent of  $i$.

 Let $( \Omega,~\cal{F}$,~$\PP )$ be a complete  probability space, and $\mathbb{E}$ denotes the expectation corresponding to $\PP$. Let~$B(t)$ be a $d$-dimensional Brownian motion defined on this probability space. Suppose that  $\{r(t)\}_{t\geq0}$ is a right-continuous Markov chain with finite  state space $\mathbb{S}=\{1, 2, \dots, m\}$ and independent of the Brownian motion $B(\cdot)$. Suppose ${\{ {\cal{F}}_{t}\}} _{t \geq 0}$ is a filtration defined on this probability space satisfying the usual conditions (i.e., it is right continuous and $\mathcal{F}_0$ contains all $\mathbb{P}$-null sets) such that $B(t)$ and $r(t)$
are ${\cal{F}}_{t}$ adapted.
  The   generator of $\{r(t)\}_{t\geq0}$ is  denoted by $\Gamma=(\gamma_{ij})_{m\times m}$,  so that for a sufficiently small $\delta>0$,
\begin{align*}
 \mathbb{P}\{r(t+\delta)=j|r(t)=i\}=\left\{
\begin{array}{ll}
\gamma_{ij}\delta+o(\delta),&~~\mathrm{if}~~i\neq j,\\
1+\gamma_{ii}\delta+o(\delta),&~~\mathrm{if}~~i=j,
\end{array}
\right.
\end{align*}
where $ {o}(\delta)$ satifies $\lim_{\delta\rightarrow 0} o(\delta)/\delta=0$. Here $\gamma_{ij}\geq 0$ is the transition rate from $i$ to $j$
if $i\neq j$ while
  $
\gamma_{ii}=-\sum_{i\neq j}\gamma_{ij}.
$
It is well known that almost every sample path  of $r(t)$ is  right-continuous step functions with a finite number of simple jumps in any finite interval of   $\mathbb{R}_+:=[0, +\infty)$(c.f. \cite{Mao2011}).

In this paper, we consider the  $n$-dimensional  SDS described by a  hybrid stochastic differential equation (HSDE)
\be\label{e1}
\mathrm{d}X(t)=f(X(t),r(t))\mathrm{d}t +g(X(t),r(t))\mathrm{d}B(t)
\ee
  with an initial value $(X(0), r(0))=(x_0, \ell)\in \mathbb{R}^n\times \mathbb{S}$, where the drift and diffusion terms
$$
  f: ~\RR^n\times \mathbb{S}    \rightarrow \RR^{n}, ~~~~ ~~~~g: ~\RR^n\times \mathbb{S}  \rightarrow \RR^{n\times d},
$$
are  locally Lipschitz continuous, that is,
for any $N>0$  there exists a positive   constant $C_N$ such that, for any $x, y\in \RR ^n  $ with  $|x |\vee |y|\leq N$ and any $i\in \mathbb{S}$,
  $$|f(x, i)-f(y, i)|\vee |g(x, i)-g(y, i)|\leq C_N |x-y|.$$

Let
  $\mathcal{C}^{2} (\RR^n \times \mathbb{S};  {{\RR}}_+)$ denote the family of all nonnegative functions
 $V(x, i)$ on $\RR^n  \times \mathbb{S}$ which are continuously twice
 differentiable in $x$.
 For each $V(x, i)\in \mathcal{C}^{2} (\RR^n  \times \mathbb{S};   \RR_+)$, define an
operator ${\cal{L}}V$ from
 $  \RR^n  \times \mathbb{S}$ to $  \RR$ by
\begin{align}\la{1e1.2}
{\cal{L}}V(x,  i ) =& V_x(x, i)f(x, i)
 +\frac{1}{2} \hbox{trace}\left(g^T(x, i)V_{xx}(x, i)g(x, i)\right)+\sum_{j=1}^{m}\gamma_{ij}V(x,  j),
\end{align}
where
$$ V_x(x,  i)=\left(\frac{\partial V(x,  i)}{\partial x_1},
 \dots,\frac{\partial V(x,  i)}{\partial x_n}\right),~~~~ V_{xx}(x,  i)=\left(\frac{\partial^2
V(x,  i)}{\partial x_j  \partial x_l}\right)_{n\times n}.$$

In order for the existence of exact regular solutions  we impose the following assumption.
  \begin{assp}\label{a1}
For some  $\bar{p}>0$ and each $ i \in \mathbb{S}$, there exists a  symmetric positive-definite matrix  $Q_i\in \RR^{n\times n}$  and a  constant  $\alpha_i$ such that
\begin{align}\label{e2}
\limsup_{|x|\rightarrow \infty } \displaystyle\frac{(1+x^TQ_ix)\psi(x,i)-(2-\bar{p})|x^TQ_ig(x, i)|^2}{(x^TQ_ix)^2}\leq  \alpha_i,~~~~\forall x  \in\RR^{n},
\end{align}
 where
$
\psi(x,i):=2x^T Q_{i}f(x, i)+\trace{\left[g^T(x, i)Q_{i}g(x, i)\right]}.
$
  \end{assp}

Now we give the  regularity of  exact solutions as well as the  estimation on their $p$th moment.
  \begin{theorem}\la{T:1}
Under Assumption   \ref{a1}, SDS (\ref{e1}) with any initial value $(x_0, \ell)\in \mathbb{R}^n\times \mathbb{S}$ has a unique regular solution $(X(t),r(t))$ satisfying
  \be\la{e0p}\sup_{0\leq t\leq T}
   \E|X(t)|^{p  } \leq C
   \ee
for any $p\in(0, \bar{p}]$, $T> 0$, where the positive constant $\bar{p}$ is given by  Assumption   \ref{a1}.
  \end{theorem}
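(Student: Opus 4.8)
The plan is to follow the classical Khasminskii-type Lyapunov argument, adapted to the switching setting. Since $f$ and $g$ are locally Lipschitz, a standard truncation procedure yields a unique maximal local solution $(X(t),r(t))$ up to an explosion time $\tau_\infty=\lim_{N\to\infty}\tau_N$, where $\tau_N=\inf\{t\ge 0:|X(t)|\ge N\}$. Both assertions of the theorem — regularity (i.e. $\tau_\infty=\infty$ a.s.) and the uniform moment bound (\ref{e0p}) — will follow once I exhibit a Lyapunov function whose generator grows at most linearly in the function itself.

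The candidate I would take is $V(x,i)=(1+x^TQ_ix)^{p/2}$ for $p\in(0,\bar p]$. Writing $U:=1+x^TQ_ix$ and computing $\mathcal{L}V$ from the definition (\ref{1e1.2}), a direct differentiation gives
\begin{align*}
\mathcal{L}V(x,i)=\frac{p}{2}U^{p/2-2}\Big[U\psi(x,i)-(2-p)|x^TQ_ig(x,i)|^2\Big]+\sum_{j=1}^m\gamma_{ij}(1+x^TQ_jx)^{p/2},
\end{align*}
in which $\psi$ is precisely the quantity appearing in Assumption \ref{a1}. The key step is then to convert Assumption \ref{a1} into a global bound of the form $\mathcal{L}V(x,i)\le K\,V(x,i)$. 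Two observations drive this. First, since $|x^TQ_ig|^2\ge 0$ and $p\le\bar p$, one has $U\psi-(2-p)|x^TQ_ig|^2\le U\psi-(2-\bar p)|x^TQ_ig|^2$, so the bracket is controlled by the numerator of (\ref{e2}); then, using $(x^TQ_ix)^2\le U^2$ and hence $U^{p/2-2}(x^TQ_ix)^2\le U^{p/2}$, the $\limsup$ condition bounds the first term by $\frac{p}{2}(\alpha_i+1)V(x,i)$ once $|x|$ is large. Second, the switching sum is handled by comparing the distinct matrices $Q_j$: each being symmetric positive-definite, $\hat\lambda|x|^2\le x^TQ_jx\le\check\lambda|x|^2$ with $0<\hat\lambda\le\check\lambda$, so every $V(x,j)$ is comparable to $V(x,i)$ up to a constant and $\sum_{j}\gamma_{ij}V(x,j)\le C\,V(x,i)$. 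On the bounded region $\{|x|\le R\}$ the continuity of $\mathcal{L}V$ together with $V\ge 1$ absorbs any residual terms. Combining these yields a single constant $K$ with $\mathcal{L}V(x,i)\le K\,V(x,i)$ for all $(x,i)\in\RR^n\times\SS$.

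With this differential inequality the rest is routine. Applying the generalized It\^o formula for switching diffusions to $V(X(t\wedge\tau_N),r(t\wedge\tau_N))$, taking expectations and using $\mathcal{L}V\le KV$ gives, via Gronwall's inequality, $\E V(X(t\wedge\tau_N),r(t\wedge\tau_N))\le V(x_0,\ell)e^{Kt}$ uniformly in $N$. Since $V(X(\tau_N),r(\tau_N))\ge(\hat\lambda N^2)^{p/2}$ on $\{\tau_N\le t\}$, Chebyshev's inequality forces $\PP(\tau_N\le t)\le V(x_0,\ell)e^{Kt}/(\hat\lambda N^2)^{p/2}\to 0$ as $N\to\infty$, whence $\tau_\infty=\infty$ a.s. and the solution is regular. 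Letting $N\to\infty$ and invoking Fatou's lemma gives $\E V(X(t),r(t))\le V(x_0,\ell)e^{KT}$ on $[0,T]$; since $V(x,i)\ge\hat\lambda^{p/2}|x|^p$, this yields $\sup_{0\le t\le T}\E|X(t)|^p\le C$, namely (\ref{e0p}).

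The main obstacle I anticipate is the bookkeeping that turns the purely asymptotic $\limsup$ condition of Assumption \ref{a1} into a clean global inequality — in particular exploiting the monotonicity in $p$ of the coefficient $(2-p)$ so that one Lyapunov function serves all $p\in(0,\bar p]$, and controlling the state-dependent switching term $\sum_j\gamma_{ij}V(x,j)$ across the different matrices $Q_j$, which has no counterpart in the non-switching case.
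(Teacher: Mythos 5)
Your proposal is correct and takes essentially the same route as the paper: the same Lyapunov function $V(x,i)=(1+x^TQ_ix)^{p/2}$, the same conversion of the asymptotic condition (\ref{e2}) into a global bound of the form $\mathcal{L}V\le K V$ (the paper keeps an explicit additive constant $C_i$ via Young's inequality while you absorb it using $V\ge 1$, and both arguments compare the matrices $Q_j$ through the ratio $(\check{\lambda}\hat{\lambda}^{-1})^{p/2}$). The concluding stopping-time/Gronwall/Fatou argument you spell out is exactly the standard Khasminskii-type step that the paper cites to Mao and Yuan and omits.
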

\begin{proof}
For any $i\in \mathbb{S}$ and  $ 0<\kappa_i< 1$,  Assumption  \ref{a1} together with the continuity of $f$ and $g$ implies that there exists a positive constant  $C_i$ such that
\begin{align}\label{eq02}
 \displaystyle (1+x^TQ_ix) \psi(x,i)-(2-\bar{p})|x^TQ_ig(x, i)|^2
   \leq  \left(\alpha_i+\frac{\kappa_i}{2}\right)(1+x^TQ_ix)^2+C_i
\end{align}
for any $ x \in \RR^n $. For  any fixed  $0<p\leq\bar{p}$,  define $V(x, i)= (1+x^TQ_ix)^{\frac{p}{2}}.$
Direct calculation, together with (\ref{eq02}), leads to
\begin{align}\la{YH_00}
 {\cal{L}}V(x, i)
%=&p(1+x^TQ_ix)^{\frac{p}{2}-1}x^TQ_if(x, i) +\frac{p}{2}
%\left[(1+x^TQ_ix)^{\frac{p}{2}-1} \trace\left(g^T(x,i)Q_ig(x,i)\right)\right.\nn\\ &
%\left.+ (p-2)(1+x^TQ_ix)^{\frac{p}{2}-2}|x^TQ_ig(x,i)|^2\right]
 %+\sum_{j=1}^{m}\gamma_{ij}(1+x^TQ_jx)^{\frac{p}{2}} \nn\\
=&\frac{p}{2} (1+x^TQ_ix)^{\frac{p}{2}-2}\Big\{
(1+x^TQ_ix) \left[2x^TQ_i f(x, i)+ \trace\left(g^T(x,i)Q_ig(x,i)\right) \right]\nn\\
&~~~~~~~~~~~~~~~~~~~~~~~ -(2-p)|x^TQ_ig(x, i)|^2 \Big\}
 +\sum_{j=1}^{m}\gamma_{ij}(1+x^TQ_jx)^{\frac{p}{2}}\nn\\
\leq&
 \frac{p}{2}\left(\alpha_i+  \frac{\kappa_i}{2}\right) (1+x^TQ_ix)^{\frac{p}{2}}
 + \sum_{j=1}^{m}\gamma_{ij}(1+x^TQ_jx)^{\frac{p}{2}}
 +\frac{p C_i}{2}(1+x^TQ_ix)^{\frac{p}{2}-2}.
\end{align}
%If $0<p\leq 4$, then $(1 +x^TQ_ix)^{\frac{p}{2}-2}\leq 1$ for any $(x, i)\in \RR^n \times\mathbb{S}$; while if $4< p$, then it follows from the Young inequality
%that for each $i\in \mathbb{S}$,
%\begin{align*}%\la{YH_1}
%\frac{p C_i}{2}(1+x^TQ_ix)^{\frac{p}{2}-2}
%\leq& (2 C_i)^{\frac{p}{4}} \left(\frac{ p-4 }{p\kappa_i}\right)^{\frac{p-4}{4}} +\frac{p\kappa_i}{4}(1+x^TQ_ix)^{\frac{p}{2}}, ~~\forall x\in\RR^n.
%\end{align*}
For any $(x, i)\in \RR^n \times\mathbb{S}$ and any $p>0$, using the Young inequality, we obtain
\begin{align}\la{YH_2}
\frac{p C_i}{2}(1+x^TQ_ix)^{\frac{p}{2}-2}
\leq&% (2 C_i)^{\frac{p}{4}} \left(\frac{| p-4| }{p\kappa_i}\right)^{\frac{|p-4|}{4}}+ \frac{p }{2}
\frac{p\kappa_i }{4}(1+x^TQ_ix)^{\frac{p}{2}}+ C_i.
\end{align}
Therefore it follows from (\ref{YH_00}) and (\ref{YH_2}) that
\begin{align}\la{YH_02}
{\cal{L}}V(x,i)
%\leq&
% \frac{p}{2}\left(\alpha_i+ \kappa_i \right)(1+x^TQ_ix)^{\frac{p}{2}}
%+ \sum_{j=1}^{m}\gamma_{ij}(1+x^TQ_jx)^{\frac{p}{2}}+C_i\nn\\
%\leq& \bigg[\frac{p}{2}\big(\alpha_i+ \kappa_i \big)
%+ \sum_{j=1}^{m}\gamma_{ij}\Big(\frac{1+x^TQ_jx}{1+x^TQ_ix}\Big)^{\frac{p}{2}}\bigg](1+x^TQ_ix)^{\frac{p}{2}}+C_i\nn\\
%\leq& \bigg[\frac{p}{2}\big(\alpha_i+ \kappa_i \big)
%+ \gamma_{ii}+\sum_{j\neq i}^{m}\gamma_{ij}\Big(\frac{1+\lambda_{\max}(Q_j)|x|^2}{1+\lambda_{\min}(Q_i)|x|^2}\Big)^{\frac{p}{2}}\bigg](1+x^TQ_ix)^{\frac{p}{2}}+C_i\nn\\
\leq&  \bigg[\frac{p}{2}\big(\alpha_i+ \kappa_i \big)
+ \gamma_{ii}+  \sum_{j\neq i}^{m}\gamma_{ij}\big(\check{\lambda}\hat{\lambda}^{-1}\big)^{\frac{p}{2}}\bigg](1+x^TQ_ix)^{\frac{p}{2}}+C_i,
\end{align}
where  $\check{\lambda}:=\max_{i\in \mathbb{S}}\{\lambda_{\max}(Q_i)\}, \hat{\lambda}:=\min_{i\in \mathbb{S}}\{\lambda_{\min}(Q_i)\}.$
%Recalling the property of the generator that $\sum_{j=1}^{m}\gamma_{ij}=0$, we obtain
%\begin{align}\la{eq0}
%{\cal{L}}V(x,i)
%\leq   \bigg[\frac{p}{2}\big(\alpha_i+ \kappa_i \big)
% -\Big(\big( \check{\lambda} \hat{\lambda}^{-1}\big)^{\frac{p}{2}}-1\Big)\gamma_{ii}\bigg]\big(1+x^TQ_ix\big)^{\frac{p}{2}}+C_i
%\leq   C\big(1+V(x,i)\big).
%\end{align}
%The above inequality guarantees the existence
%of the unique global solution $X(t)$ (see  e.g., \cite[Theorem 3.19, p.95]{Mao06}).
%Using generalised It\^{o} formula (see  e.g., \cite[Lemma 1.9, p.49]{Mao06}),   we have
%\begin{align*}
% \mathbb{E}V(X(t), r(t))
%%=&V(X_0, \ell)+ \mathbb{E}\left[\int_{0}^{t}{\cal{L}}V(X(s), r(s))\mathrm{d}s\right]\nn\\
%\leq&V(x_0, \ell)+C
% \int_{0}^{t}\mathbb{E}V(X(s), r(s)) \mathrm{d}s
% +CT
%\end{align*}
%for any $0\leq t\leq T$.
%By the Gronwall inequality,  we deduce that
%\begin{align*}
%\hat{\lambda}^{\frac{p}{2}}\mathbb{E}|X(t)|^p\leq \mathbb{E}V(X(t), r(t))
% \leq \big(V(x_0, \ell)+CT\big)\exp({Ct})\leq C\exp({CT}).
%\end{align*}
%This implies the required assertion (\ref{e0p}).
%
%The next proof is standard (see e.g., Yuan and Mao \cite{Yuan3}),  we can get the required result but omit the details to avoid duplication.
The next proof is standard (see e.g., Mao and Yuan \cite[Theorem 3.19, p.95]{Mao06}) and hence is omitted.
 \end{proof}

Define  $\tau_N=\inf\left\{ t\geq 0:|X(t)|\geq N\right\}$. By the virtue of Theorem \ref{T:1}, we have
\begin{align} \la{eq0-1}
\mathbb{P}\{\tau_{N}\leq T\}\leq \frac{C}{N^{p}}
\end{align}
for all  $p\in(0, \bar{p}]$, where $C$  is  a constant independent of $N$.

%By the way, we prepare the following lemma  for  the convergence proof of the numerical method.
%\begin{lemma}\la{L:C_0}
%  Under Assumption  \ref{a1}, for each positive integer $N>|x_0|$, define
%\begin{align*}
%\tau_{N}=:\inf\{t\geq 0:|X(t)|\geq N\},
%\end{align*}
%then for any $T>0$,
%\begin{align} \la{eq0-1}
%\mathbb{P}\{\tau_{N}\leq T\}\leq \frac{C}{N^{p}}
%\end{align}
%for all  $p\in(0, \bar{p}]$, where $C$  is  a constant independent of $N$.
%  \end{lemma}
%\begin{proof}
%See Appendix A.
% \end{proof}

\section{Moment estimate and strong convergence}\label{s-c}
In this section, we aim  to construct an easily implementable explicit scheme and show the strong convergence
 %converges strongly to the exact solution of SDS \eqref{e1}
 under Assumption \ref{a1}. Given a stepsize $\Delta >0$ and let $t_k=k\t$, $r_k =r(t_k)$ for  $k\geq0$, and one-step transition probability matrix
  $
P(\Delta)=\left( P_{ij}(\Delta)\right)_{m\times m}=\exp(\Delta \Gamma).
$
 The discrete Markov chain $\{r_k,\ k=0, 1, \dots\}$ can be simulated by the technique in  \cite[p.112]{Mao06}.
%In order to define the numerical solution, we need to explain how to simulate a discrete Markov chain, which has been formulated in  \cite[p.112]{Mao06}. To make the content  self-contained,  we sketch it here.
 %:  let $r(0)=\ell$ and give a random pseudo number $\varsigma_1$ obeying the uniform $(0,1)$ distribution. Define
% \begin{align*}
%r_1=\left\{
%\begin{array}{ll}
%\dis i_1, &\dis\mathrm{if}~i_1\in \Se-\{m\}~\mathrm{such\ that} \sum_{j=1}^{i_1-1}P_{\ell  j}(\Delta)\leq \varsigma_1< \sum_{j=1}^{i_1}P_{\ell  j}(\Delta),\nn\\
%\dis m, &\dis\mathrm{if}~\sum_{j=1}^{m-1}P_{\ell  j}(\Delta)\leq \varsigma_1,
%\end{array}
%\right.
%\end{align*}
%where $\sum_{j=1}^{0}P_{\ell  j}(\Delta)=0$ as usual. In other words, the probability of state $s$ being chosen is given by $\PP(r_1=s)=P_{\ell   s}(\Delta)$. Generally, after the computations of $r_1, \dots, r_k$,  give a random pseudo number $\varsigma_{k+1}$ obeying a uniform $(0,1)$ distribution and define $r_{k+1}$ by
% \begin{align*}
%r_{k+1}=\left\{
%\begin{array}{ll}
% i_{k+1},  &\dis\mathrm{if}~i_{k+1}\in \mathbb{S}-\{m\}~\mathrm{such~that} \sum_{j=1}^{i_{k+1}-1} P_{r_k j}(\Delta)\leq \varsigma_{k+1}< \sum_{j=1}^{i_{k+1}}P_{r_k j}(\Delta),\nn\\
%  m,  &\dis\mathrm{if}~\sum_{j=1}^{m-1}P_{r_k j}(\Delta)\leq \varsigma_{k+1}.
%\end{array}
%\right.
%\end{align*}
%This procedure can be carried out independently to obtain more trajectories.
To define  appropriate numerical solutions,  we firstly choose several strictly increasing continuous functions $\f_i: [1, \infty)\rightarrow \RR_+$ such that $\f_i(u)\rightarrow \infty$
as $u\rightarrow \infty$ and
\be\la{e21}
\sup_{|x|\leq u} \dis\left(\frac{ |f (x,i)|}{1+|x|}\vee\frac{| g(x, i)|^2 }{(1+|x|)^2}\right)\leq \f_i(u),~~~~~\forall~i\in \mathbb{S},~ u\geq 1.
\ee
We note that $\f_i$ is well defined since $f(\cdot, i)$ and $g(\cdot, i)$ are locally Lipschitz continuous. Denote by $\f_i^{-1} $ the inverse function of $\f_i$, obviously $\f_i^{-1}: [\f_i(1),\infty)\rightarrow \RR_+ $ is  a strictly increasing continuous function.
We also choose   a  strictly decreasing $h:(0, 1]\rightarrow \big[\check{\f}(|x_0|\vee 1), \infty\big)$ such that
\be\la{e22}
\lim_{\t\rightarrow 0} h(\t)= \infty ~~\hbox{and} ~~\t^{1/2}h(\t)\leq K,~~~~~~\forall~ \t \in (0, 1],
\ee
where $ {K}$ is a positive constant  independent of the iteration order  $k$ and the stepsize $\t$.
For   any given $\t\in (0, 1]$,  each $i\in \Se$, define a truncation mapping $\pi_{\t}^i:\RR^n\ra \RR^{n}$ by
 \begin{align*}
\pi_\t^i(x)= \Big(|x|\wedge \f^{-1}_i(h(\t))\Big) \frac{x}{|x|},
\end{align*}
where we use the convention  $\frac{x}{|x|}=\mathbf{0}$ when $x=\mathbf{0}\in \mathbb{R}^n$.
Obviously, for any $(x, i)\in \RR^n \times\mathbb{S},$
 \begin{equation}\la{e23}
 |f(\pi_\t^i(x), i)|
 \leq      h(\t) \left(1+ |\pi_\t^i(x)| \right),~~~~
 |g(\pi_\t^i(x), i)|^2
\leq     h(\t) \left(1+ |\pi_\t^i(x)| \right)^2.
 \end{equation}
 \begin{rem}
If there exists a  state  $i \in  \mathbb{S}$ such that
$ |f(x, i)|\leq C(1+|x|) ,~ |g(x, i)|^2\leq C(1+|x|)^2, ~\forall x\in \mathbb{R}^n, $
   we  choose $\f_i(u)\equiv C$ for any   $u\geq 1$, and  let $\f^{-1}_i(u)\equiv +\infty$ for any $u\in [C, +\infty)$. Then
$\pi_\t^i(x)=x$, and (\ref{e23})
%$ |f(\pi_\t^i(x), i)|\leq h(\t)\big(1+|\pi_\t^i(x)|\big),~ |g(\pi_\t^i(x), i)|^2\leq h(\t)\big(1+|\pi_\t^i(x)|\big)^2$
hold always. %where $h(\t)=C \t^{-1/2}$.
%In other word,  % even without the truncation,  $f(\cdot, i)$ and $g(\cdot, i)$ still keep linear growth. So
%the results of this section still hold for linear growth case.
\end{rem}

  Next we propose an explicit scheme  to approximate the exact solution of SDS (\ref{e1}). To avoid the arbitrarily large excursions from Brownian pathes,
 we truncate the grid point value according to the growth rate of the drift and diffusion functions. %For any given stepsize $\t\in (0, 1]$,
 Define
 \begin{align}\la{Y_0}
\left\{
\begin{array}{ll}
\tilde{Y}_0=x_0,~~r_0=\ell,&\\
Y_{k}=\pi_\t^{r_{k}}(\tilde{Y}_{k}),&\\
\tilde{Y}_{k+1}=Y_k+f(Y_k, r_k)\t +g(Y_k, r_k)\t B_k,& \\
\end{array}
\right.
\end{align}
for any integer $k\geq 0$, where $\t B_k=B(t_{k+1})-B(t_{k})$. We call this iteration   the {\it truncated EM scheme}. This method prevents the diffusion term from producing extra-ordinary large value. Additionally, the drift term and diffusion term have the linear property
\begin{align}\la{Y_01}
|f (Y_k, r_k) |  \leq    h(\t) (1+| Y_k|), ~~~|g(Y_k, r_k)|^2 \leq    h(\t) (1+| Y_k|)^2.
\end{align}
To obtain the continuous-time approximations,   define $\tilde{Y}(t)$, $Y(t)$ and $\bar{r}(t)$ by
 \begin{align*}%\la{Y_02}
\tilde{Y}(t):=\tilde{Y}_{k},~~~~Y(t):=Y_k,~~~~\bar{r}(t):=r_k,~~~~\forall t\in[t_k,  t_{k+1}).
\end{align*}
  For convenience, denote by
 $\mathcal{G}_{t_k}$ the $\sigma$-algebra  generated by   $\{\mathcal{F}_{t_k}, r_{k+1}\}$. Obviously,
 $\mathcal{F}_{t_k} \subseteq \mathcal{G}_{t_k}$.
The following  lemma will play an important role  in the  proof  of the moment boundedness of the numerical solutions.
  \begin{lemma}\la{Z:1}
For any measurable functions    $ \phi:\mathbb{R}^n\times \mathbb{S}\times \mathbb{S}\rightarrow \mathbb{R}$, $ \tilde{\phi}:\mathbb{R}^n\times \mathbb{S}\times \mathbb{S}\rightarrow \mathbb{R}^{1 \times d}$ and   $ \bar{\phi}:\mathbb{R}^n\times \mathbb{S}\times \mathbb{S}\rightarrow \mathbb{R}^{d\times d}$, we have
 \begin{align}\la{lyhf1}
&\mathbb{E}\Big[\phi(Y_{k}, r_k,r_{k+1})\big|\mathcal{F}_{t_k}\Big] =\phi(Y_{k}, r_k, r_k)
+\sum_{j\in \mathbb{S}} \phi(Y_{k},r_k, j) [\gamma_{r_k j}\t+o(\t)],\\
& \mathbb{E}\Big[\tilde{\phi}(Y_{k}, r_k,r_{k+1})\t B_k\big|\mathcal{F}_{t_k}\Big] = 0,&\la{lyhf1+}
\end{align}
and
\begin{align}\la{lyhf2}
 \E\Big[\t B_k^T \bar{\phi}(Y_{k}, r_k,r_{k+1})\t B_k\big|\mathcal{F}_{t_k}\Big]
 =& \trace{\left(\bar{\phi}(Y_{k}, r_k,r_{k})\right)}\t\nn\\
&+ \sum_{j\in \mathbb{S}} \trace{\left(\bar{\phi}(Y_{k}, r_k,j)\right)} [\gamma_{r_k j}\t^2+o(\t^2)].
\end{align}
%where $Y_{k}$ and $r_k$  are  introduced in \eqref{Y_0}.
  \end{lemma}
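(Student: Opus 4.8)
The plan is to prove all three identities by conditioning, using that $Y_k$ and $r_k$ are $\mathcal{F}_{t_k}$-measurable while $r_{k+1}$ and $\t B_k$ are not, together with the independence of the chain $r(\cdot)$ and the Brownian motion $B(\cdot)$. For the first identity (\ref{lyhf1}) I would condition on $\mathcal{F}_{t_k}$ and invoke the Markov property: since $(Y_k,r_k)$ is $\mathcal{F}_{t_k}$-measurable and the conditional law of $r_{k+1}$ given $\mathcal{F}_{t_k}$ depends only on $r_k$,
\[
\E\big[\phi(Y_k,r_k,r_{k+1})\,\big|\,\mathcal{F}_{t_k}\big]=\sum_{j\in\mathbb{S}}\phi(Y_k,r_k,j)\,P_{r_kj}(\t).
\]
Splitting off the diagonal term $j=r_k$ and substituting the short-time expansions $P_{ii}(\t)=1+\gamma_{ii}\t+o(\t)$ and $P_{ij}(\t)=\gamma_{ij}\t+o(\t)$ for $i\neq j$, which follow from $P(\t)=\exp(\t\Gamma)$ and the generator introduced in Section \ref{n-p}, yields (\ref{lyhf1}) after recombining the diagonal correction into the sum.

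For the second identity I would introduce the intermediate $\sigma$-algebra $\mathcal{G}_{t_k}=\sigma(\mathcal{F}_{t_k},r_{k+1})$ and use the tower property. Because $B$ and $r$ are independent and $\t B_k=B(t_{k+1})-B(t_k)$ is independent of $\mathcal{F}_{t_k}$, the increment $\t B_k$ is independent of $\mathcal{G}_{t_k}$, whereas $\tilde\phi(Y_k,r_k,r_{k+1})$ is $\mathcal{G}_{t_k}$-measurable. Hence the factor $\tilde\phi$ pulls out and
\[
\E\big[\tilde\phi(Y_k,r_k,r_{k+1})\,\t B_k\,\big|\,\mathcal{G}_{t_k}\big]=\tilde\phi(Y_k,r_k,r_{k+1})\,\E[\t B_k]=0,
\]
so taking $\E[\,\cdot\mid\mathcal{F}_{t_k}]$ gives (\ref{lyhf1+}).

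The third identity combines both devices. Conditioning first on $\mathcal{G}_{t_k}$, the matrix $M:=\bar\phi(Y_k,r_k,r_{k+1})$ is $\mathcal{G}_{t_k}$-measurable while $\t B_k\sim N(0,\t I_d)$ is independent of $\mathcal{G}_{t_k}$, so the Gaussian quadratic-form identity $\E[Z^TMZ]=\t\,\trace(M)$ for $Z\sim N(0,\t I_d)$ (valid for any, not necessarily symmetric, $M$) yields
\[
\E\big[\t B_k^T\bar\phi(Y_k,r_k,r_{k+1})\t B_k\,\big|\,\mathcal{G}_{t_k}\big]=\t\,\trace\big(\bar\phi(Y_k,r_k,r_{k+1})\big).
\]
Taking $\E[\,\cdot\mid\mathcal{F}_{t_k}]$, applying (\ref{lyhf1}) to the scalar function $(x,i,j)\mapsto\trace(\bar\phi(x,i,j))$, and multiplying through by $\t$ produces (\ref{lyhf2}), once one absorbs $\t\cdot o(\t)$ into $o(\t^2)$.

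The routine parts are the expansion of $P_{ij}(\t)$ and the quadratic-form computation. The one point requiring genuine care is the independence of $\t B_k$ from $\mathcal{G}_{t_k}$, i.e.\ that conditioning on the \emph{future} chain value $r_{k+1}$ does not bias the Brownian increment; this rests squarely on the standing hypothesis that $B(\cdot)$ and $r(\cdot)$ are independent, and it is precisely what makes $\mathcal{G}_{t_k}$ the right intermediate $\sigma$-algebra. The secondary subtlety is the error bookkeeping in the last identity, ensuring the $o(\t)$ inherited from (\ref{lyhf1}) is correctly upgraded to $o(\t^2)$ after multiplication by $\t$.
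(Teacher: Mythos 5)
Your proposal is correct and follows essentially the same route as the paper: the same intermediate $\sigma$-algebra $\mathcal{G}_{t_k}$ with the tower property, the same short-time expansion of the transition probabilities for \eqref{lyhf1}, and the same reduction of the quadratic form to a trace followed by an application of \eqref{lyhf1} for \eqref{lyhf2}. Your explicit justification that $\t B_k$ is independent of $\mathcal{G}_{t_k}$ (via the independence of $B(\cdot)$ and $r(\cdot)$) makes precise the conditional moment identities that the paper simply asserts, but this is a refinement of the same argument, not a different one.
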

\begin{proof}
First of all, note  that    $Y_k$ and $r_k$ are $\F_{t_k}$-measurable.  %Set $ $ .
 Then, by the Markov property, we derive that
 \begin{align*}
& \mathbb{E}\Big[\phi(Y_{k}, r_k,r_{k+1})\big|\mathcal{F}_{t_k}\Big]\nn\\
  =&\E \Big[\phi(Y_{k}, r_k,r_{k+1}) I_{\{r_{k+1}=r_k\}}\big|\mathcal{F}_{t_k}\Big]
   +\E \Big[\phi(Y_{k}, r_k,r_{k+1}) I_{\{r_{k+1}\neq r_k\}}\big|\mathcal{F}_{t_k}\Big]\nn\\
=&\sum_{i\in \mathbb{S}}I_{\{r_k=i\}} \phi(Y_{k}, i, i) \mathbb{P}\{r_{k+1}=i|r_k=i\} +\sum_{i,j\in \mathbb{S}, j\neq i} I_{\{r_k=i\}}\phi(Y_{k}, i, j) \mathbb{P}\{r_{k+1}=j|r_k=i\}\nn\\
= &\sum_{i\in \mathbb{S}}I_{\{r_k=i\}} \phi(Y_{k}, i, i)\Big(1 +\gamma_{i i}\t+o(\t)\Big)
  +\sum_{i\in \mathbb{S}}I_{\{r_k=i\}}\sum_{j\in \mathbb{S}-\{i\}}\phi(Y_{k}, i, j) \Big(\gamma_{i j}\t+o(\t)\Big)\nn\\
=&\phi(Y_{k}, r_k, r_k)+\sum_{j\in \mathbb{S}} \phi(Y_{k},r_k, j) [\gamma_{r_k j}\t+o(\t)].
\end{align*}
Thus, the required assertion \eqref{lyhf1} follows.
 Note also that    $Y_k$, $r_{k}$ and $r_{k+1}$ are $\mathcal{G}_{t_k}$-measurable.
Using  the properties
\begin{align}\la{L_2+}  \mathbb{E}\big[\triangle B_k|\mathcal{G}_{t_k}\big]=\mathbb{E}(\triangle B_k)=0,~~~~ \mathbb{E}\big[\t B_k\t B_k^T|\mathcal{G}_{t_k}\big]=\mathbb{I}_d \t,
\end{align}
where $\mathbb{I}_d $ denotes the $d\times d$ identity matrix.  Thus, we obtain that
\begin{align*}% \la{lyhf4}
 \E\Big[\tilde{\phi}(Y_{k}, r_k,r_{k+1})\t B_k\big|\mathcal{F}_{t_k}\Big]
= &\E\Big[ \mathbb{E}  \big(\tilde{\phi}(Y_{k}, r_k,r_{k+1})\t B_k\big|\mathcal{G}_{t_k}\big)\big|\mathcal{F}_{t_k}\Big]\nn\\
=& \E\Big[\tilde{\phi}(Y_{k}, r_k,r_{k+1}) \mathbb{E}  \big(\t B_k\big|\mathcal{G}_{t_k}\big)\big|\mathcal{F}_{t_k}\Big]=0,
\end{align*} which implies that  \eqref{lyhf1+} holds. Moreover,
\begin{align} \la{lyhf4+}
 \E\Big[\t B_k^T\bar{\phi}(Y_{k}, r_k,r_{k+1})\t B_k\big|\mathcal{F}_{t_k}\Big]
 =&\E\Big[\mathbb{E}\Big(\trace{\big(\bar{\phi}(Y_{k}, r_k,r_{k+1})\t B_k\t B_k^T\big)}\big|\mathcal{G}_{t_k}\Big)\big|\mathcal{F}_{t_k}\Big]\nn\\
=&\trace{\Big\{\E\Big[\bar{\phi}(Y_{k}, r_k,r_{k+1})\mathbb{E}\big(\t B_k\t B_k^T\big|\mathcal{G}_{t_k}\big)\big|\mathcal{F}_{t_k}\Big]}\Big\}\nn\\
=&\E\Big[\trace{\big(\bar{\phi}(Y_{k}, r_k,r_{k+1})  \big)}\big|\mathcal{F}_{t_k}\Big]\t.
\end{align}
 Making use of \eqref{lyhf1} we arrive at \eqref{lyhf2}. The proof is complete.
 \end{proof}

In order to estimate   the $p$th moment of the numerical solution $Y(t)$,  we prepare  an elementary inequality.
 \begin{lemma}\la{L-Tay}
For any given $2k<p\leq 2(k+1)$ ($k$ is a nonnegative integer) the following inequality
 $$
 (1+u)^{\frac{p}{2} } \leq   1+ \frac{p}{2} u + \frac{p(p-2)}{8} u^2+   u^3 a_k(u)
$$
holds for any $u>-1$, where $a_k(u)$ represents a $k$th-order polynomial of $u$ which coefficients    depend  only on $p$.
   \end{lemma}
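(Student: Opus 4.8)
The plan is to recognize the three explicit terms as the second-order Taylor polynomial of $(1+u)^{q}$ at $u=0$, where $q:=p/2$, since $\binom{q}{0}=1$, $\binom{q}{1}=q=\tfrac{p}{2}$ and $\binom{q}{2}=\tfrac{q(q-1)}{2}=\tfrac{p(p-2)}{8}$. Thus the claim is equivalent to bounding the Taylor remainder $R(u):=(1+u)^{q}-1-qu-\tfrac{q(q-1)}{2}u^{2}$ by $u^{3}a_{k}(u)$ for all $u>-1$. I would prove this by induction on the integer $k\ge 0$ singled out by $2k<p\le 2(k+1)$, equivalently $k<q\le k+1$, taking $0<p\le 2$ as the base case and reducing the exponent by $2$ in the inductive step.

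For the base case $k=0$ (so $0<q\le 1$) I claim one may take the constant $a_{0}\equiv\binom{q}{3}=\tfrac{q(q-1)(q-2)}{6}$. Setting $W(u):=1+qu+\tfrac{q(q-1)}{2}u^{2}+\binom{q}{3}u^{3}-(1+u)^{q}$, one has $W(0)=W'(0)=W''(0)=0$, and a direct differentiation gives $W'''(u)=q(q-1)(q-2)\big[1-(1+u)^{q-3}\big]$. For $0<q<1$ the prefactor $q(q-1)(q-2)$ is positive while, since $q-3<0$, the bracket is positive for $u>0$ and negative for $-1<u<0$; hence $W'''$ has the same sign as $u$. Integrating from $0$ then shows $W''\ge 0$ on $(-1,\infty)$, so $W$ is convex there with a critical point at $0$, whence $W\ge W(0)=0$. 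This yields $R(u)\le\binom{q}{3}u^{3}$ for every $u>-1$; the degenerate case $q=1$ gives $R\equiv 0$.

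For the inductive step I assume the statement for $k-1$, i.e. for every exponent $p'\in(2(k-1),2k]$. Given $p\in(2k,2(k+1)]$ I set $p'=p-2\in(2(k-1),2k]$, apply the hypothesis to get $(1+u)^{p'/2}\le 1+\tfrac{p'}{2}u+\tfrac{p'(p'-2)}{8}u^{2}+u^{3}a_{k-1}(u)$, and multiply through by $1+u>0$, which is legitimate for all $u>-1$ and preserves the inequality. Since $(1+u)^{p/2}=(1+u)(1+u)^{p'/2}$, it remains to expand the right-hand side and observe that the constant, linear and quadratic coefficients collapse to $1$, $\tfrac{p}{2}$ and $\tfrac{p(p-2)}{8}$; these are exactly the identities $\tfrac{p'}{2}+1=\tfrac{p}{2}$ and $\tfrac{p'(p'-2)}{8}+\tfrac{p'}{2}=\tfrac{p(p-2)}{8}$ for $p'=p-2$. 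The surviving higher-order part is $u^{3}a_{k}(u)$ with $a_{k}(u)=\tfrac{(p-2)(p-4)}{8}+(1+u)a_{k-1}(u)$, a polynomial of degree $k$ whose coefficients depend only on $p$, which closes the induction.

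The only genuinely delicate point is the base case, where a single cubic term must dominate the remainder simultaneously on $u\ge 0$ and on $-1<u<0$, across the sign change of $u^{3}$. The sign analysis of $W'''$ resolves both regimes at once; equivalently one may write $R$ in integral remainder form $\tfrac{q(q-1)(q-2)}{2}\int_{0}^{u}(u-t)^{2}(1+t)^{q-3}\,dt$ and compare the integrand $(1+t)^{q-3}$ with its value $1$ at $t=0$, the sign of $(1+t)^{q-3}-1$ against the direction of integration again covering both cases. The inductive step, by contrast, is purely algebraic: multiplication by $1+u$ raises the degree of $a_{k}$ by exactly one and regenerates the correct Taylor coefficients automatically, so I expect no obstacle there.
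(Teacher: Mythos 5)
Your proposal is correct and follows essentially the same route as the paper: both reduce to the base case $0<p\le 2$ (with the same cubic coefficient $\binom{p/2}{3}=\tfrac{p(p-2)(p-4)}{48}$) and then multiply by powers of $1+u$, you peeling off one factor at a time by induction while the paper factors out $(1+u)^k$ in a single step. The only place you go beyond the paper is the base case, which the paper merely asserts by ``applying the Taylor formula''; your sign analysis of $W'''$ (equivalently, the integral-remainder comparison) is a correct and complete justification of that step on all of $(-1,\infty)$.
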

\begin{proof}
Applying the Taylor formula, for $k=0$, namely, $0<p\leq 2$,     we have
\be\label{Y2}
(1+u)^{\frac{p}{2} } \leq
\displaystyle 1+ \frac{p}{2} u + \frac{p(p-2)}{8} u^2+ \frac{p(p-2)(p-4)}{48} u^3,~~~~\forall u>-1.
\ee
Then using the above inequality, for $k=1$, namely, $2<p\leq 4$, we have
\begin{align*}
(1+u)^{\frac{p}{2} } & = (1+u)(1+u)^{\frac{p}{2}-1 } \\
&\leq (1+u)\displaystyle \Big(1+ \frac{p-2}{2} u + \frac{(p-2)(p-4)}{8} u^2+ \frac{(p-2)(p-4)(p-6)}{48} u^3\Big)\\
&=  1+ \frac{p}{2} u + \frac{p(p-2)}{8} u^2+ \frac{p(p-2)(p-4)}{48} u^3 + \frac{ (p-2)(p-4)(p-6)}{48} u^4
\end{align*}
for any $u>-1.$ Generally, for  $2k<p\leq 2(k+1)$ and  any $u>-1$, we have
\begin{align*}
(1+u)^{\frac{p}{2} }  & = (1+u)^k(1+u)^{\frac{p}{2}-k} \\
&\leq (1+u)^k\displaystyle \Big(1+ \frac{p-2k}{2} u + \frac{(p-2k)(p-2k-2)}{8} u^2\nn\\
&~~~~~~~~~~~~~~~~~~~~+ \frac{(p-2k)(p-2k-2)(p-2k-4)}{48} u^3\Big)\\
&=  1+ \frac{p}{2} u + \frac{p(p-2)}{8} u^2+   u^3 a_k(u).
\end{align*}
Therefore the desired result follows.
\end{proof}

\subsection{Moment estimate}\label{s3.1}
  Let us begin to establish the criterion on
  the $p$th moment boundedness  of the numerical solutions of Scheme \eqref{Y_0}.
   \begin{theorem}\la{T:C_1}
Under Assumption   \ref{a1},  Scheme \eqref{Y_0} has the property that
  \be\la{Y_1}
  \sup_{0<\t\leq 1}\left(\sup_{0\leq k\t\leq T}
   \E|Y_k|^{p }\right) \leq C
   \ee
   for any $p\in(0, \bar{p}]$ and any $T>0$, where $C$ is a constant independent of   the iteration order $k$ and the stepsize $\t$.
  \end{theorem}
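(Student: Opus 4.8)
The plan is to run a one-step Lyapunov estimate for the test function $V(x,i)=(1+x^TQ_ix)^{p/2}$ from Theorem \ref{T:1} and then to close a discrete Gronwall recursion. First I would record that truncation only shrinks the radius: since $|\pi_\t^i(x)|\le|x|$ and $\pi_\t^i(x)$ is a scalar multiple of $x$ with factor in $[0,1]$, one has $V(Y_{k+1},r_{k+1})\le V(\tilde Y_{k+1},r_{k+1})$, so it suffices to bound $\E[V(\tilde Y_{k+1},r_{k+1})\mid\mathcal F_{t_k}]$. Writing $\zeta_k=f(Y_k,r_k)\t+g(Y_k,r_k)\t B_k$ and $Q=Q_{r_{k+1}}$, I would factor
\[
1+\tilde Y_{k+1}^TQ\tilde Y_{k+1}=(1+Y_k^TQ Y_k)(1+\xi_k),\qquad \xi_k=\frac{2Y_k^TQ\zeta_k+\zeta_k^TQ\zeta_k}{1+Y_k^TQ Y_k},
\]
which is legitimate because $1+\tilde Y_{k+1}^TQ\tilde Y_{k+1}>0$ forces $\xi_k>-1$, so Lemma \ref{L-Tay} applies with the integer $k_0$ fixed by $2k_0<p\le 2(k_0+1)$.

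Next I would condition on $\mathcal G_{t_k}$, which freezes $r_{k+1}$ and hence $Q$, and expand through Lemma \ref{L-Tay}. Using the martingale identities \eqref{L_2+}, the two lowest-order contributions assemble, over the common denominator $U^2$ with $U:=1+Y_k^TQ Y_k$, into
\[
\frac{p\t}{2U^2}\Big[U\big(2Y_k^TQ f+\trace(g^TQ g)\big)-(2-p)|Y_k^TQ g|^2\Big],\qquad f=f(Y_k,r_k),\ g=g(Y_k,r_k).
\]
On the diagonal event $\{r_{k+1}=r_k=i\}$ we have $Q=Q_i$, and this bracket is exactly the quantity controlled in \eqref{eq02} (recall $2-p\ge 2-\bar p$ since $p\le\bar p$); it is thus bounded by $\frac p2(\alpha_i+\tfrac{\kappa_i}{2})U^2+C_i$, the stray $C_i$-term being absorbed by the Young inequality \eqref{YH_2}, reproducing the generator estimate \eqref{YH_02}.

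The crux is to show that every remaining contribution is $O(\t)$ uniformly in $\t$ and $k$. The naive route through $\E[|\xi_k|^m\mid\mathcal G_{t_k}]$ fails: since \eqref{Y_01} only gives $|g|\le h(\t)^{1/2}(1+|Y_k|)$ and \eqref{e22} only gives $\t^{1/2}h(\t)\le K$, one finds $\E[|\xi_k|^3\mid\mathcal G_{t_k}]$ of order $\t^{3/4}$, which is \emph{not} summable over the $T/\t$ steps. The remedy is to keep signs and exploit Gaussian symmetry. Decomposing $\xi_k=P+W^T\t B_k+\t B_k^TS\t B_k$, where after dividing by $U$ and invoking \eqref{Y_01} one has $|P|\le C\t h(\t)$, $|W|\le C h(\t)^{1/2}$, $|S|\le C h(\t)$, I would expand the remainder $\xi_k^3 a_{k_0}(\xi_k)$ of Lemma \ref{L-Tay}, a polynomial in $\xi_k$ of degrees $3$ through $k_0+3$. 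Every monomial carrying an odd power of the linear Gaussian form $W^T\t B_k$ has zero $\mathcal G_{t_k}$-expectation, while a surviving monomial with $a$ factors $P$, $b$ (even) factors $W^T\t B_k$ and $c$ factors $\t B_k^TS\t B_k$ is bounded by $C(\t h(\t))^{a+b/2+c}$. Since $a+b+c=j\ge 3$ with $b$ even, the exponent obeys $a+b/2+c\ge 2$; hence each term is at most $C(\t h(\t))^2=C\,\t^2h(\t)^2\le CK^2\t$, using $\t h(\t)\le K$ to absorb the surplus. The same bookkeeping makes the leftover $\t^2$-order pieces of the linear and quadratic terms $O(\t)$, so multiplying the whole expansion back by $U^{p/2}$ produces remainders $\le C\t\,V(Y_k,r_{k+1})$.

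Finally I would take $\E[\cdot\mid\mathcal F_{t_k}]$ via \eqref{lyhf1}. The diagonal term gives $V(Y_k,i)\big(1+\tfrac p2(\alpha_i+\kappa_i)\t+\gamma_{ii}\t\big)+C\t$, while each off-diagonal state $j\ne i$ enters with weight $\gamma_{ij}\t$; since the corresponding bracket already carries a factor $\t$, only its leading piece $\gamma_{ij}\t\,(1+Y_k^TQ_jY_k)^{p/2}$ survives, and the elementary bound $(1+Y_k^TQ_jY_k)^{p/2}\le(\check{\lambda}\hat{\lambda}^{-1})^{p/2}(1+Y_k^TQ_iY_k)^{p/2}$ recovers exactly the switching coefficient of \eqref{YH_02}. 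Altogether
\[
\E\big[V(Y_{k+1},r_{k+1})\mid\mathcal F_{t_k}\big]\le(1+\beta\t)\,V(Y_k,r_k)+C\t,\qquad \beta=\max_{i\in\mathbb{S}}\Big[\tfrac p2(\alpha_i+\kappa_i)+\gamma_{ii}+\sum_{j\ne i}\gamma_{ij}(\check{\lambda}\hat{\lambda}^{-1})^{p/2}\Big]+C.
\]
Taking expectations and iterating yields $\E[V(Y_k,r_k)]\le e^{\beta k\t}V(x_0,\ell)+\tfrac{C}{\beta}(e^{\beta k\t}-1)$ (with the obvious modification if $\beta\le 0$), which for $k\t\le T$ is bounded uniformly in $\t\in(0,1]$ and $k$; since $V(Y_k,r_k)\ge\hat{\lambda}^{p/2}|Y_k|^p$, the claim \eqref{Y_1} follows. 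The genuine obstacle is the signed control of the higher-order terms, where Gaussian cancellation together with the balance $\t^{1/2}h(\t)\le K$ turns a divergent crude bound into an $O(\t)$ one; the mismatch between $Q_{r_{k+1}}$ and the coefficients evaluated at $r_k$ is the secondary difficulty, resolved cleanly by the diagonal/off-diagonal split.
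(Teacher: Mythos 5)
Your proposal is correct and follows essentially the same route as the paper's proof: the same Lyapunov function $(1+x^TQ_ix)^{p/2}$, the factorization of $1+\tilde Y_{k+1}^TQ\tilde Y_{k+1}$ expanded via Lemma \ref{L-Tay}, conditional moment estimates that exploit Gaussian odd-moment cancellation together with $\t^{1/2}h(\t)\le K$ (this is precisely what the paper's bounds \eqref{Y_4}--\eqref{Y_6} do, and you correctly identify that a crude bound on $\E[|\zeta_k|^3\mid\mathcal{F}_{t_k}]$ would only give order $\t^{3/4}$), Lemma \ref{Z:1} for the chain, the truncation monotonicity $Y_{k+1}^TQ_{r_{k+1}}Y_{k+1}\le\tilde Y_{k+1}^TQ_{r_{k+1}}\tilde Y_{k+1}$, the hypothesis \eqref{eq02} with \eqref{YH_2}, and a discrete Gronwall iteration. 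The only divergence is bookkeeping: the paper factors out $(1+Y_k^TQ_{r_k}Y_k)$ and places the mismatch $Y_k^TQ_{r_{k+1}}Y_k-Y_k^TQ_{r_k}Y_k$ inside its $\zeta_k$, estimating its moments by Lemma \ref{Z:1}, whereas you factor out $(1+Y_k^TQ_{r_{k+1}}Y_k)$, freeze $Q_{r_{k+1}}$ by conditioning on $\mathcal{G}_{t_k}$, and split diagonal/off-diagonal states only at the final application of \eqref{lyhf1} — both variants are sound and lead to the same recursion.
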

\begin{proof}
For any integer $k\geq 0$, we have
 \begin{align}\la{E:H2}
\!\!\!(1+  \tilde{Y}^T_{k+1}Q_{r_{k+1}}\tilde{Y}_{k+1})^{\frac{p}{2}}
   &=\Big[1+ \big(Y_k+f(Y_k, r_k)\t+ g(Y_k, r_k)\t B_k\big)^TQ_{r_{k+1}}\nn\\
   &\times\big(Y_k+f(Y_k, r_k)\t+ g(Y_k, r_k)\t B_k\big)\Big]^{\frac{p}{2}}
   \!=\! \left(1+ Y^T_k Q_{r_{k}}Y_k \right)^{\frac{p}{2}}( 1+\zeta_k)^{\frac{p}{2}},
\end{align}
where
\begin{align*}
 \zeta_k =&(1+ Y^T_k Q_{r_{k}}Y_k )^{-1}\Big( Y^T_kQ_{r_{k+1}}Y_k -Y^T_kQ_{r_{k}}Y_k+ 2Y_k^TQ_{r_{k+1}} f(Y_k, r_k)\t\\
&~~~~+\t B_k^Tg^T(Y_k, r_k)Q_{r_{k+1}} g(Y_k, r_k)\t B_k+2Y_k^TQ_{r_{k+1}}g(Y_k, r_k)\t B_k\\
&~~~~+f^T(Y_k, r_k)Q_{r_{k+1}}f(Y_k, r_k)\t^2+2f^T(Y_k, r_k)Q_{r_{k+1}}g(Y_k, r_k)\t B_k \t\Big),
\end{align*}
and we can see that $\zeta_k>-1$. By the virtue of Lemma \ref{L-Tay}, without loss the generality we   prove \eqref{Y_1} only for $0<p\leq 2$. It follows from \eqref{Y2} and \eqref{E:H2} that
\begin{align} \la{Y_3}
  &  \E  \left[ \left(1+\tilde{Y}^T_{k+1}Q_{r_{k+1}}\tilde{Y}_{k+1}\right)^{\frac{p}{2}}\big|\mathcal{F}_{t_k}\right] \nn\\
   \leq & (1 + Y^T_kQ_{r_k}Y_k)^{\frac{p}{2}}\bigg[1 + \frac{p}{2} \E\big(\zeta_k\big|\mathcal{F}_{t_k}\big)
     + \frac{p(p-2)}{8} \E\big(\zeta_k^2\big|\mathcal{F}_{t_k}\big)
  +  \frac{p(p-2)(p-4)}{48} \E\big(\zeta_k^3\big|\mathcal{F}_{t_k}\big)\bigg].
\end{align}
  It follows from Lemma \ref{Z:1} and (\ref{Y_01})  that
\begin{align*}% \la{yhfY_1}
\E\big[Y^T_kQ_{r_{k+1}}Y_k\big|\mathcal{F}_{t_k}\big]
=  Y^T_kQ_{r_{k}}Y_k+\sum_{j\in \mathbb{S}} Y_k^T Q_{j}Y_k [\gamma_{r_k j}\t+o(\t)]
\leq   Y^T_kQ_{r_{k}}Y_k+C|Y_k|^2\t,
\end{align*}
and
\begin{align*}%\la{yhfY_2}
&\E\Big[2Y_k^TQ_{r_{k+1}} f(Y_k, r_k)\t+\t B_k^Tg^T(Y_k, r_k)Q_{r_{k+1}} g(Y_k, r_k)\t B_k\big|\mathcal{F}_{t_k}\Big]\nn\\
=&\psi(Y_k, r_k)\t
 +\sum_{j\in \mathbb{S}}\Big(2Y_k^T Q_{j}f(Y_k, r_k) +\trace{\left(g^T(Y_k, r_k)Q_{j}g(Y_k, r_k)\right)}\Big)\big(\gamma_{r_k j}\t^2 +o(\t^2)\big) \nn\\
%\leq&\psi(Y_k, r_k)\t
% + \sum_{  j\in \Se }  \sum_{i\in \Se } | Q_{j}| \Big( 2|Y_k| |f(Y_k,i)| +  |g (Y_k, i)|^2   \Big)I_{\{ r_k =i\}}\Big(|  \gamma_{i j}|\t^2+  |o(\t^2)|\Big) \nn\\
\leq&\psi(Y_k, r_k)\t+ C     (1+ |Y_k|    )^2  h(\t ) \t^2
\leq \psi(Y_k, r_k)\t+   C (1+|Y_k|)^2 \t^{\frac{3}{2}}.
\end{align*}
Thus both of the above inequalities implies
\begin{align}\la{Y_4}
 \E\big[\zeta_k\big|\mathcal{F}_{t_k}\big]
\leq& \dis {(1+Y^T_kQ_{r_k}Y_k)^{-1 }} \Big(\psi(Y_k, r_k)\t+C|Y_k|^2\t
 + C|f (Y_k, r_k)|^2 \t^2 + C (1+|Y_k|)^2\t^{\frac{3}{2}}  \Big)\nn\\
%%%%%%%%%%%%%%%%%%%%%%%%%%%%%%%%%%%%%%%%%%%%%%%%%%%%%%%%%
 % \leq& \dis{(1+Y^T_kQ_{r_k}Y_k)^{-1 }}  \psi(Y_k, r_k)\t+C{(1+\hat{\lambda} |Y_k|^2)^{-1}}  (1+|Y_k|)^2 \t\nn    \\
  %%%%%%%%%%%%%%%%%%%%%%%%%%%%%%%%%%%%%%%%%%%%%%%
  \leq & \dis {(1+Y^T_kQ_{r_k}Y_k)^{-1 }}  \psi(Y_k, r_k)\t+ C\t.
\end{align}
%where $\hat{\lambda}$ is introduced by \eqref{YH_02}.
Using Lemma \ref{Z:1} and (\ref{Y_01}) again yields
\begin{align}\la{yhfY_5}
 \E\big[\zeta_k^2\big|\mathcal{F}_{t_k}\big]
 \geq &{(1+Y^T_kQ_{r_k}Y_k)^{-2 }}\bigg\{\E \Big[\E \Big[|2Y_k^TQ_{r_{k+1}}g(Y_k, r_k)\t B_k|^2\Big|\mathcal{G}_{t_k}\Big]\Big|\mathcal{F}_{t_k}\Big] \nn\\
 &+4\E \Big[\E \Big[\big(Y_k^TQ_{r_{k+1}}g(Y_k, r_k)\t B_k\big)^T
 \big(Y^T_kQ_{r_{k+1}}Y_k  -Y^T_kQ_{r_{k}}Y_k\nn\\
 & ~~~~~+ \t B_k^Tg^T(Y_k, r_k)Q_{r_{k+1}} g(Y_k, r_k)\t B_k +f^T(Y_k, r_k) Q_{r_{k+1}}f(Y_k, r_k)\t^2\nn\\
 & ~~~~~+2Y_k^TQ_{r_{k+1}} f(Y_k, r_k)\t+2f^T(Y_k, r_k)Q_{r_{k+1}}g(Y_k, r_k)\t B_k \t  \Big)\Big|\mathcal{G}_{t_k}\Big]\Big|\mathcal{F}_{t_k}\Big]\bigg\}\nn\\
\geq&(1+Y^T_kQ_{r_k}Y_k)^{-2 }\Big[4\t \E \Big(|Y_k^TQ_{r_{k+1}}g(Y_k, r_k)|^2\big|\mathcal{F}_{t_k}\Big)
  - C(1+|Y_k|)^4h^2(\t)   \t^2\Big]\nn\\
% \geq &{4(1+Y^T_kQ_{r_k}Y_k)^{-2 }}  |Y_k^TQ_{r_{k}}g(Y_k, r_k)|^2\t-C h^2(\t)\t^2\nn\\
%&-{4(1\!+Y^T_kQ_{r_k}Y_k)^{-2 }}  |Y_k^TQ_{r_k}g(Y_k, r_k)|^2\big( |\gamma_{r_k,r_k }| \t^2+|o(\t^2)|\big) \nn\\
\geq &{4(1+Y^T_kQ_{r_k}Y_k)^{-2 }}  |Y_k^TQ_{r_{k}}g(Y_k, r_k)|^2\t -C \t .
\end{align}
Using  the properties
\begin{align}\la{L_2}
 \mathbb{E } \big(|\triangle B_k|^{2j} \big|\mathcal{G}_{t_k}\big)=C\t^j,~~ ~  \mathbb{E } \big((A\triangle B_k)^{2j-1}\big|\mathcal{G}_{t_k} \big)=0,~~  j=1, 2, \dots
\end{align}
for any $A\in \mathbb{R}^{1\times d}$ as well as Lemma \ref{Z:1}, we  deduce that
\begin{align*}%\la{yhfY_6}
 \E\big[\zeta^3_k\big|\mathcal{F}_{t_k}\big]
=&{(1+Y^T_kQ_{r_k}Y_k)^{-3 }}\E\Big[\E\Big[\big(Y^T_kQ_{r_{k+1}}Y_k -Y^T_kQ_{r_{k}}Y_k+2Y_k^TQ_{r_{k+1}} f(Y_k, r_k)\t \nonumber\\
&+ \t B^T_k g^T(Y_k, r_k) Q_{r_{k+1}}g(Y_k, r_k)\t B_k+2Y_k^TQ_{r_{k+1}}g(Y_k, r_k)\t B_k\nn\\
&+ f^T(Y_k, r_k) Q_{r_{k+1}}f(Y_k, r_k) \t^2 +2f^T(Y_k, r_k)Q_{r_{k+1}}g(Y_k, r_k)\t B_k \t\big)^3\Big|\mathcal{G}_{t_k} \Big]\Big|\mathcal{F}_{t_k} \Big]\nonumber\\
% \leq & {C(1+Y^T_kQ_{r_k}Y_k)^{-3 }}\bigg\{\E  \Big[|Y^T_kQ_{r_{k+1}}Y_k -Y^T_kQ_{r_{k}}Y_k|^3\big|\mathcal{F}_{t_k}\Big]+ |Y_k|^3 |f(Y_k, r_k)|^3\t^3
% \nn\\
% & + \E  \Big[ |g(Y_k, r_k)|^6\E  \big(|\t B_k|^6\big|\mathcal{G}_{t_k}\big)\big|\mathcal{F}_{t_k}\Big]+ |f(Y_k, r_k)|^6\t^6
%\nonumber\\
% &+ |g(Y_k, r_k)|^2\Big(|Y_k|^2
%  +|f(Y_k, r_k)|^2  \t^2\Big) \E  \Big[\big|Y^T_kQ_{r_{k+1}}Y_k  -Y^T_kQ_{r_{k}}Y_k\big|\E  \big(|\t B_k|^2\big|\mathcal{G}_{t_k}\big)\Big|\mathcal{F}_{t_k}\Big] \nonumber\\
%&
%&+ |g(Y_k, r_k)|^2\Big( |Y_k|^2 + |f(Y_k, r_k)|^2 \t^2\Big)\E  \Big[\Big(2\t|Y_k||f(Y_k, r_k)|\E  \big(|\t B_k|^2\big|\mathcal{G}_{t_k}\big)
%\nn\\
%&~~~~~~~~~~~~~~~~ +  |g(Y_k, r_k)|^2\E  \big(|\t B_k|^4\big|\mathcal{G}_{t_k}\big)+ \t^2|f(Y_k, r_k)|^2\E  \big(|\t B_k|^2\big|\mathcal{G}_{t_k}\big)\Big)\Big|\mathcal{F}_{t_k}\Big]
%%%%%%%%%%%%%%%%%%%%%%%%%%%%
 %\bigg\}\nonumber\\
 \leq & C{ (1+Y^T_kQ_{r_k}Y_k)^{-3 }}\bigg\{\E  \Big[|Y^T_kQ_{r_{k+1}}Y_k -Y^T_kQ_{r_{k}}Y_k|^3\big|\mathcal{F}_{t_k}\Big]
\nn\\
 &+  |Y_k|^3|f(Y_k, r_k)|^3\t^3+  | g(Y_k, r_k) |^6\t^3+ |f(Y_k, r_k)|^6\t^6 \nn\\
 &+|g(Y_k, r_k)|^2\t \Big( |Y_k|^2  +|f(Y_k, r_k)|^2\t^2\Big)
\bigg[\E \Big(\big|Y^T_kQ_{r_{k+1}}Y_k -Y^T_kQ_{r_{k}}Y_k\big|\Big|\mathcal{F}_{t_k}\Big) \nn\\
&~~~~~~~~~~~~~~~ +  \t  \Big(|Y_k|| f(Y_k, r_k)|+ |g(Y_k, r_k)|^2
 +|f(Y_k, r_k)|^2 \t\Big)\bigg] \bigg\}.
\end{align*}
By virtue of Lemma  \ref{Z:1}, for any integer $l\geq 1$,
\begin{align*}%\la{yhfY_4}
  \E\Big[|Y^T_kQ_{r_{k+1}}Y_k -Y^T_kQ_{r_{k}}Y_k|^{l}\big|\mathcal{F}_{t_k}\Big]
= &\sum_{j\in \Se} |Y^T_kQ_{j}Y_k -Y^T_kQ_{r_{k}}Y_k|^l I_{\{ r_k \neq j\}}\Big(\gamma_{r_k j}\t+o(\t)\Big)\nn\\
\leq& C(1+| Y_k|)^{2l} \t.
\end{align*}
Making use of  the above inequality and (\ref{Y_01})  yields
\begin{align}\la{Y_6}
 \E\big[\zeta^3_k\big|\mathcal{F}_{t_k}\big]
\leq & C{ (1+Y^T_kQ_{r_k}Y_k)^{-3 }}\bigg\{(1+| Y_k|)^6 \Big[\t+ h^3(\t)\t^3\big(2+ h^3(\t)\t^3\big) \nn\\
 &+ h(\t)\t^2\big(1+h^2(\t)\t^2\big)\big(1+2h(\t)+h^2(\t)\t\big)
 \Big]\bigg\}
%%%%%%%%%%%%%%%%%%%%%%%%%%%%
 \leq    C\t.
\end{align}
Similarly, we  can also prove that for any integer $l >3$, $ \E\big[|\zeta_k|^{l}|\F_{t_k}\big]\leq C\t.$
Combining (\ref{Y_3})-(\ref{yhfY_5}) and (\ref{Y_6}),  using   (\ref{eq02}) and \eqref{YH_2},  we obtain that
\begin{align*}
&\E\Big[ (1+\tilde{Y}^T_{k+1}Q_{r_{k+1}}\tilde{Y}_{k+1})^{\frac{p}{2}}\big|\mathcal{F}_{t_k}\Big]\nn\\
\leq  & \left(1+Y^T_kQ_{r_k}Y_k\right)^{\frac{p}{2}}  \bigg\{1+C\t
 +  \frac{p\t}{2}\left[\frac{(1+Y^T_kQ_{r_k}Y_k)\psi(Y_k, r_k)+(p-2)|Y_k^TQ_{r_{k}}g(Y_k, r_k)|^2}{(1+Y^T_kQ_{r_k}Y_k)^{2}}\right]  \bigg\}\nn\\
%\leq & \left(1+Y^T_kQ_{r_k}Y_k\right)^{\frac{p}{2}}  \bigg[1
%+  \frac{p\t}{2}\Big(\alpha_{r_{k}}+\frac{\kappa_{r_{k}}}{2}\Big)
% +C\t\bigg]+\frac{pC_{r_{k}}\t}{2}(1+Y^T_kQ_{r_k}Y_k)^{\frac{p}{2}-2}\nn\\
 \leq & \left(1+Y^T_kQ_{r_k}Y_k\right)^{\frac{p}{2}}  \left(1
 +C\t\right)+C\t
\end{align*}
for any integer $k\geq 0$.
The truncation property of Scheme \eqref{Y_0}
\begin{align*}
 Y^T_{k}Q_{r_{k}}Y_{k}  =\big(\pi_{\t}^{r_k}(\tilde{Y}_k)\big)^TQ_{r_{k}}\pi_{\t}^{r_k}(\tilde{Y}_k)
  = \left(\frac{ |\tilde{Y}_{k }|\wedge \f^{-1}_{r_k}(h(\t))  }{|\tilde{Y}_{k }| }  \right)^2\tilde{Y}^T_{k}Q_{r_{k}}\tilde{Y}_{k}\leq  \tilde{Y}^T_{k}Q_{r_{k}}\tilde{Y}_{k},
\end{align*}
  implies that
\begin{align*}
\E\Big[ (1+Y^T_{k+1}Q_{r_{k+1}}Y_{k+1})^{\frac{p}{2}}\big|\mathcal{F}_{t_k}\Big]
   \leq  &  (1+Y^T_kQ_{r_k}Y_k)^{\frac{p}{2}} \left(1+C\t \right)+C\t.
\end{align*}
Repeating this procedure we obtain
\begin{align*}% \la{Y_7}
 \E\lf(  (1+Y^T_{k}Q_{r_{k}}Y_{k})^{\frac{p}{2}}|\F_{0} \rt)
 \leq& \left(1 +C\t  \right)^{k}\left(1+x^T_{0}Q_{\ell}x_{0}\right)^{\frac{p}{2}}
 +
   C\t \sum_{i=0}^{k-1}\left(1 +C\t  \right)^i.
\end{align*}
Taking expectations on both sides, for any integer $k$ satisfying $0\leq k\t\leq T$, then we have
\begin{align*} %\la{Y_07}
 \E\lf( (1+Y^T_{k}Q_{r_{k}}Y_{k})^{\frac{p}{2}}\rt)
 \leq C \left(1 +C\t  \right)^{k}\leq  C\exp\left(  Ck\t\right) \leq  C\exp\left(  CT\right) .
\end{align*}
 Therefore  the desired result follows.
\end{proof}

In order to establish  the strong convergence theory of  Scheme \eqref{Y_0}, we give the following lemma.
\begin{lemma}\la{L:C_1}
  Under Assumption  \ref{a1}, define
\begin{align}\la{3.18}
\rho_{\t}=:\inf\{t\geq 0:|\tilde{Y}(t)|\geq  \varphi^{-1}_{\bar{r}(t)}(h(\t))\},
\end{align}
then for any   $T>0$,
\begin{align}\la{3.19}
 \PP {\{\rho_{\t} \leq T\}}\leq \frac{C }{ \big( \hat{\f}^{-1}(h(\t ))\big)^{\bar{p}} }
%\E\big[|\tilde{Y}(T\wedge \rho_{\t}) |^p\big]\leq C
\end{align}
for all $p\in(0, \bar{p}]$, where $C$ is  a constant independent of      $k$ and  $\t$.
  \end{lemma}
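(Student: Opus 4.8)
The plan is to translate the continuous-time crossing event into a statement about the first grid index at which the truncation is activated, and then to control the (random) overshoot value of $\tilde{Y}$ at that index by a supermartingale argument based on the one-step estimate already produced in the proof of Theorem \ref{T:C_1}.

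First I would observe that, since $\tilde{Y}(\cdot)$ and $\bar{r}(\cdot)$ are piecewise constant on the grid, $\rho_\t$ is attained at a grid point: writing $N_T=\lfloor T/\t\rfloor$ and letting $k^\ast$ be the smallest index $k$ with $|\tilde{Y}_k|\ge \f^{-1}_{r_k}(h(\t))$, one has $\{\rho_\t\le T\}=\{k^\ast\le N_T\}$, and on this event $\rho_\t=t_{k^\ast}$ with $|\tilde{Y}_{k^\ast}|\ge \f^{-1}_{r_{k^\ast}}(h(\t))\ge \hat{\f}^{-1}(h(\t))$. Moreover $k^\ast$ is a stopping time for $\{\F_{t_k}\}$, and the requirement $h(\t)\ge\check{\f}(|x_0|\vee1)$ forces $\f^{-1}_\ell(h(\t))\ge|x_0|$, hence $k^\ast\ge1$. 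Applying the Chebyshev inequality with exponent $\bar{p}$ then reduces the claim to the moment bound $\E\big[|\tilde{Y}_{k^\ast}|^{\bar{p}}I_{\{k^\ast\le N_T\}}\big]\le C$, since
\begin{align*}
\PP\{\rho_\t\le T\}\le \frac{1}{\big(\hat{\f}^{-1}(h(\t))\big)^{\bar{p}}}\,\E\big[|\tilde{Y}_{k^\ast}|^{\bar{p}}I_{\{\rho_\t\le T\}}\big].
\end{align*}

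The main obstacle is exactly this last moment bound: $k^\ast$ is random and $\tilde{Y}_{k^\ast}$ is the overshoot produced by a single Euler step from the truncated (hence bounded) state $Y_{k^\ast-1}$, so the uniform estimate of Theorem \ref{T:C_1} on $\E|Y_k|^{\bar{p}}$ cannot be quoted directly. To overcome it I would recycle the one-step inequality obtained inside the proof of Theorem \ref{T:C_1}, namely $\E\big[M_{k+1}\,\big|\,\F_{t_k}\big]\le (1+C\t)M_k+C\t$ with $M_k:=(1+\tilde{Y}_k^TQ_{r_k}\tilde{Y}_k)^{\bar{p}/2}$, where the truncation property $Y_k^TQ_{r_k}Y_k\le\tilde{Y}_k^TQ_{r_k}\tilde{Y}_k$ lets one replace $(1+Y_k^TQ_{r_k}Y_k)^{\bar{p}/2}$ by $M_k$. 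Setting $V_k:=(1+C\t)^{-k}(M_k+1)$ and using $C\t+1=1+C\t$, a short computation shows that $\{V_k\}$ is a nonnegative supermartingale. Optional stopping at the bounded stopping time $k^\ast\wedge N_T$, together with $(1+C\t)^{N_T}\le e^{CT}$, yields $\E M_{k^\ast\wedge N_T}\le e^{CT}\E V_0=C$; then $1+\tilde{Y}^TQ_i\tilde{Y}\ge\hat{\lambda}|\tilde{Y}|^2$ gives $\E\big[|\tilde{Y}_{k^\ast}|^{\bar{p}}I_{\{k^\ast\le N_T\}}\big]\le \hat{\lambda}^{-\bar{p}/2}\E M_{k^\ast\wedge N_T}\le C$.

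Combining the two displays delivers $\PP\{\rho_\t\le T\}\le C\big(\hat{\f}^{-1}(h(\t))\big)^{-\bar{p}}$ with $C$ independent of $k$ and $\t$; the constant being the one furnished by the moment estimate valid for every $p\in(0,\bar{p}]$, the identical chain of inequalities with $p$ in place of $\bar{p}$ also applies. The two points requiring care are the verification that $V_k$ is genuinely a supermartingale (so that the optional stopping theorem is legitimate at $k^\ast\wedge N_T$) and the passage from $Y_k$ to $\tilde{Y}_k$ in the one-step estimate; both are immediate consequences of the truncation structure of Scheme \eqref{Y_0} and of the bound already established in Theorem \ref{T:C_1}.
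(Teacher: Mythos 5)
Your proof is correct, and it reaches the lemma by a genuinely different route for the one step that carries all the difficulty. Like the paper, you reduce $\{\rho_\t\le T\}$ to the discrete crossing index (your $k^\ast$ is the paper's $\beta$, with $\rho_\t=\t\beta$) and finish with the same Chebyshev argument; the difference is how the moment of the overshoot $\tilde{Y}_{k^\ast}$ is controlled. The paper re-derives all the one-step conditional moment estimates for the \emph{stopped} process $\tilde{Y}_{k\wedge\beta}$, carrying the indicator $I_{[[0,\beta]]}(k+1)$ inside every conditional expectation (and invoking the Doob martingale stopping theorem so that terms such as $\E\big[\t B_k I_{[[0,\beta]]}(k+1)\big|\mathcal{G}_{t_{k\wedge\beta}}\big]$ vanish), then iterates the resulting recursion for $\E\big[(1+\tilde{Y}^T_{k\wedge\beta}Q_{r_{k\wedge\beta}}\tilde{Y}_{k\wedge\beta})^{p/2}\big]$. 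You bypass that entire re-derivation by exploiting the structural fact that the scheme truncates \emph{before} each Euler step: the one-step inequality inside the proof of Theorem \ref{T:C_1} therefore holds unconditionally at every $k$, and the truncation property $Y_k^TQ_{r_k}Y_k\le \tilde{Y}_k^TQ_{r_k}\tilde{Y}_k$, applied to the right-hand side rather than the left as the paper does, turns it into a recursion purely in $M_k=(1+\tilde{Y}_k^TQ_{r_k}\tilde{Y}_k)^{p/2}$; the discounting trick then yields a nonnegative supermartingale, and optional stopping at the bounded time $k^\ast\wedge N_T$ captures the value at the random crossing index. What the paper's route buys is self-containedness (it quotes only the ``techniques'' of Theorem \ref{T:C_1}, not an intermediate display); what yours buys is economy and transparency — the indicator bookkeeping disappears, and the argument isolates exactly why the lemma is true: every step starts from a clamped state, so the conditional one-step bound never sees the possibly large $\tilde{Y}_k$. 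Two minor remarks: your observation that $k^\ast\ge1$ is not needed (nothing breaks if $k^\ast=0$), and, exactly as in the paper, the one-step inequality for an exponent $\bar p$ possibly exceeding $2$ rests on the extension of the Taylor estimate via Lemma \ref{L-Tay}, so both proofs share that (slightly informal) reduction.
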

\begin{proof}
 Define~$\beta =:\inf\left\{k\geq 0:|\tilde{Y}_k|\geq \varphi^{-1}_{r_k}(h(\t))\right\},$ then
$
\rho_{\t}=\t \beta .
$
%We write $\beta=\beta_{\t}$ in short form.
Obviously,    $\rho_{\t}$ and $\beta$ are $\F_{t}$,   $\F_{t_k}$ stopping time, respectively.
For $\omega\in \{\beta\geq k+1\}$, we have  $Y_k=\tilde{Y}_{k}$ and
$$
\tilde{Y}_{(k+1)\wedge \beta}=\tilde{Y}_{k+1},~~~ \tilde{Y}^T_{(k+1)\wedge \beta}Q_{r_{(k+1)\wedge \beta}}\tilde{Y}_{(k+1)\wedge \beta}= \tilde{Y}^T_{k+1}Q_{r_{k+1}}\tilde{Y}_{k+1}.
$$
On the other hand, for $\omega\in \{\beta< k+1\}$, we have $\beta\leq k$ and hence
$$
\tilde{Y}_{(k+1)\wedge \beta}=\tilde{Y}_{\beta}=\tilde{Y}_{k\wedge\beta}, ~~~~~~~ \tilde{Y}^T_{(k+1)\wedge \beta}Q_{r_{(k+1)\wedge \beta}}\tilde{Y}_{(k+1)\wedge \beta} = \tilde{Y}^T_{k\wedge\beta}Q_{r_{k\wedge\beta}}\tilde{Y}_{k\wedge\beta}.
$$
Therefore, we derive from (\ref{Y_0}) that for any integer $k\geq 0$,
\begin{align*}
 \tilde{Y}_{(k+1)\wedge \beta}
  =\tilde{Y}_{k\wedge \beta}+\left[f(\tilde{Y}_{k}, r_k)\t+g(\tilde{Y}_{k}, r_k)\t B_k\right]I_{[[0, \beta]]}(k+1).
\end{align*}
Then
\begin{align}\la{FF1}
 \left(1+ \tilde{Y}^T_{(k+1)\wedge \beta}Q_{r_{(k+1)\wedge \beta}}\tilde{Y}_{(k+1)\wedge \beta} \right)^{\frac{p}{2}}
  = \left(1+ \tilde{Y}^T_{k\wedge \beta}Q_{r_{k\wedge \beta}}\tilde{Y}_{k\wedge \beta}\right)^{\frac{p}{2}}\left(1+\tilde{\zeta}_kI_{[[0, \beta]]}(k+1)\right)^{\frac{p}{2}},
\end{align}
where
\begin{align*}
\tilde{\zeta}_k& =\Big(1+\tilde{Y}^T_kQ_{r_k}\tilde{Y}_k\Big)^{-1}\Big( \tilde{Y}^T_kQ_{r_{k+1}}\tilde{Y}_k -\tilde{Y}^T_kQ_{r_{k}}\tilde{Y}_k+2\tilde{Y}_k^TQ_{r_{k+1}} f(\tilde{Y}_k, r_k)\t\\
  & ~~~~+2\tilde{Y}_k^TQ_{r_{k+1}}g(\tilde{Y}_k, r_k)\t B_k+\t B^T_kg^T(\tilde{Y}_k, r_k) Q_{r_{k+1}}g(\tilde{Y}_k, r_k)\t B_k\nn\\
  & ~~~~+ f^T(\tilde{Y}_k, r_k)Q_{r_{k+1}}f(\tilde{Y}_k, r_k) \t^2+2f^T(\tilde{Y}_k, r_k)Q_{r_{k+1}}g(\tilde{Y}_k, r_k)\t B_k \t \Big).
  \end{align*}
By the virtue of Lemma \ref{L-Tay}, without loss the generality we prove the required result only for $0<p\leq 2$. It follows from (\ref{Y2}) and (\ref{FF1}) that
\begin{align} \la{FF_3}
  &  \E \Big[ \big(1+\tilde{Y}^T_{(k+1)\wedge \beta}Q_{r_{(k+1)\wedge \beta}}\tilde{Y}_{(k+1)\wedge \beta}\big)^{\frac{p}{2}} \big|\mathcal{F}_{t_{k\wedge \beta}}\Big] \nonumber\\
 \leq & \big(1+\tilde{Y}^T_{k\wedge \beta}Q_{r_{k\wedge\beta}}\tilde{Y}_{k\wedge \beta}\big)^{\frac{p}{2}}\bigg\{1+ \frac{p}{2}
 \E \big[\tilde{\zeta}_kI_{[[0, \beta]]}(k+1)\big|\mathcal{F}_{t_{k\wedge \beta}}\big]
 + \frac{p(p-2)}{8}\E \big[\tilde{\zeta}_k^2I_{[[0, \beta]]}(k+1)\big|\mathcal{F}_{t_{k\wedge \beta}}\big]\nn\\
  &
~~~~~~~~~~~~~~~~~~~~~~~~~~~~~~~~~~+ \frac{p(p-2)(p-4)}{48} \E \big[\tilde{\zeta}_k^3I_{[[0, \beta]]}(k+1)\big|\mathcal{F}_{t_{k\wedge \beta}}\big]\bigg\}.
\end{align}
By Lemma \ref{Z:1} one oberserves that
\begin{align}\la{Leq1}
&\E \Big[\big(\tilde{Y}^T_kQ_{r_{k+1}}\tilde{Y}_k\big) I_{[[0, \beta]]}(k+1)\big|\mathcal{F}_{t_{k\wedge \beta}} \Big]\nn\\
=&\E \Big[\Big( \tilde{Y}^T_kQ_{r_{k}}\tilde{Y}_k+\sum_{j\in \mathbb{S}} \tilde{Y}_k^T Q_{j}\tilde{Y}_k [\gamma_{r_k j}\t+o(\t)]\Big)I_{[[0, \beta]]}(k+1)\big|\mathcal{F}_{t_{k\wedge \beta}} \Big]\nn\\
\leq& \E\Big[\Big(\tilde{Y}^T_kQ_{r_{k}}\tilde{Y}_k +C|\tilde{Y}_k|^2\t\Big)I_{[[0, \beta]]}(k+1)\big|\mathcal{F}_{t_{k\wedge \beta}} \Big],
\end{align}
Note that
$
\t B_kI_{[[0, \beta]]}(k+1)=B(t_{(k+1)\wedge \beta})-B(t_{k\wedge \beta}).
$
Since $B(t)$ is a continuous local martingale, by the virtue of the Doob martingale stopping time theorem, %(see e.g., \cite[pp. 11-12]{Mao08}),
  we know that $
%\begin{align}\la{Leq1}
\E\big[\t B_kI_{[[0, \beta]]}(k+1)\big|\mathcal{G}_{t_{k\wedge \beta}}\big]=0
$ %\end{align}
and
 $
\E\big[\t B_k\t B_k^T I_{[[0, \beta]]}(k+1)\big|\mathcal{G}_{t_{k\wedge \beta}}\big]=\t \mathbb{I}_d \E\big[I_{[[0, \beta]]}(k+1)\big|\mathcal{G}_{t_{k\wedge \beta}}\big].
$
Hence
\begin{align*}
&\E\Big[\big(\t B_k^Tg^T(\tilde{Y}_k, r_k)Q_{r_{k+1}} g(\tilde{Y}_k, r_k)\t B_k\big)I_{[[0, \beta]]}(k+1)\big|\mathcal{F}_{t_{k\wedge \beta}} \Big]\nn\\
=&\E\Big\{\trace\Big[g^T(\tilde{Y}_{k\wedge \beta}, r_{k\wedge \beta})Q_{r_{(k+1)\wedge \beta}} g(\tilde{Y}_{k\wedge \beta}, r_{k\wedge \beta}) \E\Big(\t B_k\t B_k^T I_{[[0, \beta]]}(k+1)\big|\mathcal{G}_{t_{k\wedge \beta}}\Big)\Big]\Big|\mathcal{F}_{t_{k\wedge \beta}}\Big\}\nn\\
%=&\t \E\Big\{\E\Big[\trace\big( g^T(\tilde{Y}_k, r_k)Q_{r_{k+1}} g(\tilde{Y}_k, r_k)\big)I_{[[0, \beta]]}(k+1)\big|\mathcal{G}_{t_{k\wedge \beta}}\Big]\Big|\mathcal{F}_{t_{k\wedge \beta}}\Big\}\nn\\
=&\t \E \Big[\trace\big( g^T(\tilde{Y}_k, r_k)Q_{r_{k+1}} g(\tilde{Y}_k, r_k)\big)  I_{[[0, \beta]]}(k+1)\big|\mathcal{F}_{t_{k\wedge \beta}} \Big].
\end{align*}
The above equality together with  Lemma \ref{Z:1} implies
\begin{align*}
&\E \Big[\big(2\tilde{Y}_k^TQ_{r_{k+1}} f(\tilde{Y}_k, r_k)\t+\t B_k^Tg^T(\tilde{Y}_k, r_k)Q_{r_{k+1}} g(\tilde{Y}_k, r_k)\t B_k\big)I_{[[0, \beta]]}(k+1)\big|\mathcal{F}_{t_{k\wedge \beta}} \Big]\nn\\
=&\t \E\Big[\Big(2\tilde{Y}_k^TQ_{r_{k+1}} f(\tilde{Y}_k, r_k) +\trace\big( g^T(\tilde{Y}_k, r_k)Q_{r_{k+1}} g(\tilde{Y}_k, r_k)\big)\Big)I_{[[0, \beta]]}(k+1)\big|\mathcal{F}_{t_{k\wedge \beta}}  \Big]\nn\\
\leq&\t\E\Big[\Big(\psi(\tilde{Y}_k, r_k) +   C (1+|\tilde{Y}_k|)^2h(\t )\t\Big)I_{[[0, \beta]]}(k+1)\big|\mathcal{F}_{t_{k\wedge \beta}} \Big].
\end{align*}
Thus  the above inequality and \eqref{Leq1} imply
\begin{align}\la{FF_4}
&\E \Big[\tilde{\zeta}_kI_{[[0, \beta]]}(k+1)\big|\mathcal{F}_{t_{k\wedge \beta}} \Big]=\E\Big[\E\Big(\tilde{\zeta}_kI_{[[0, \beta]]}(k+1)\big|\mathcal{G}_{t_{k\wedge \beta}}\Big)\Big|\mathcal{F}_{t_{k\wedge \beta}} \Big]\nn\\
\leq& \dis \E\Big[{(1+\tilde{Y}^T_kQ_{r_k}\tilde{Y}_k)^{-1 }}\Big( \psi(\tilde{Y}_k, r_k)\t+C|\tilde{Y}_k|^2\t+C|f(\tilde{Y}_k, r_k)|^2 \t^2\nn\\
&~~~~~~~~~~~~~~~~~~~~~~~~~~~~~~~~+C(1+|\tilde{Y}_k|)^2h(\t)\t^2  \Big)I_{[[0, \beta]]}(k+1)\big|\mathcal{F}_{t_{k\wedge \beta}} \Big]\nn\\
\leq& \dis\E\Big[\Big( {(1+\tilde{Y}^T_kQ_{r_k}\tilde{Y}_k)^{-1 }}\psi(\tilde{Y}_k, r_k)\t+C \t  \Big)I_{[[0, \beta]]}(k+1)\big|\mathcal{F}_{t_{k\wedge \beta}} \Big].
\end{align}
Using the  techniques in the proof of  Theorem \ref{T:C_1}, we  show that
\begin{align}\la{FF_5}
&\E \Big[\tilde{\zeta}^2_kI_{[[0, \beta]]}(k+1)\big|\mathcal{F}_{t_{k\wedge \beta}} \Big]=\E\Big[\E\Big(\tilde{\zeta}^2_kI_{[[0, \beta]]}(k+1)\big|\mathcal{G}_{t_{k\wedge \beta}}\Big)\Big|\mathcal{F}_{t_{k\wedge \beta}} \Big]\nn\\
\geq &\E\Big[\Big(4(1+\tilde{Y}^T_kQ_{r_k}\tilde{Y}_k)^{-2 } |\tilde{Y}_k^TQ_{r_{k}}g(\tilde{Y}_k, r_k)|^2\t -C\t \Big)I_{[[0, \beta]]}(k+1)\big|\mathcal{F}_{t_{k\wedge \beta}} \Big],
\end{align}
and
 \begin{align}\la{FF_6}
  \E\Big[\tilde{\zeta}^3_kI_{[[0, \beta]]}(k+1)\big|\mathcal{F}_{t_{k\wedge \beta}} \Big]
  =&\E\Big[\E\Big(\tilde{\zeta}^3_kI_{[[0, \beta]]}(k+1)\big|\mathcal{G}_{t_{k\wedge \beta}}\Big)\Big|\mathcal{F}_{t_{k\wedge \beta}} \Big]\nn\\
  \leq &   C\t \E\Big[I_{[[0, \beta]]}(k+1)\big|\mathcal{F}_{t_{k\wedge \beta}} \Big].
\end{align}
Similarly, we  can also show that for any integer $l>3$,
 \begin{align*}%\la{yhfFF_6}
 \E\Big[ |\tilde{ \zeta}_k|^{l}I_{[[0, \beta]]}(k+1)\big|\mathcal{F}_{t_{k\wedge \beta}} \Big]\leq C\t \E\Big[I_{[[0, \beta]]}(k+1)\big|\mathcal{F}_{t_{k\wedge \beta}} \Big].
 \end{align*}
 Combining (\ref{FF_3}) and \eqref{FF_4}-(\ref{FF_6}),  using   (\ref{eq02}) and \eqref{YH_2}, for any integer $k\geq 0$,
\begin{align*}
& \E \Big[\big(1+ \tilde{Y}^T_{(k+1)\wedge \beta}Q_{r_{(k+1)\wedge \beta}} \tilde{Y}_{(k+1)\wedge \beta}\big)^{\frac{p}{2}} \big|\mathcal{F}_{t_{k\wedge \beta}} \Big] \nonumber\\
   \leq  &  \bigg\{1
+  \E\Big[\Big(\frac{p\t}{2} \frac{(1+\tilde{Y}^T_kQ_{r_k}\tilde{Y}_k)\psi(\tilde{Y}_k, r_k)+(p-2)|\tilde{Y}_k^TQ_{r_{k}}g(\tilde{Y}_k, r_k)|^2}{(1+\tilde{Y}^T_kQ_{r_k}\tilde{Y}_k)^{2}} \\
  & ~~~~~~~~~~~~~ +C\t  \Big)I_{[[0, \beta]]}(k+1)\Big|\mathcal{F}_{t_{k\wedge \beta}} \Big]\bigg\}\Big(1+ \tilde{Y}^T_{k\wedge \beta}Q_{r_{k\wedge \beta}}\tilde{Y}_{k\wedge \beta}\Big)^{\frac{p}{2}} \nn\\
 \leq  & \Big(1+ \tilde{Y}^T_{k\wedge \beta}Q_{r_{k\wedge \beta}}\tilde{Y}_{k\wedge \beta}\Big)^{\frac{p}{2}}  \bigg\{1
+  \E\Big[\Big(\frac{p\t}{2} \big(\alpha_{r_{k}}+\frac{\kappa_{r_{k}}}{2}\big)
   +C\t  \Big)I_{[[0, \beta]]}(k+1)\Big|\mathcal{F}_{t_{k\wedge \beta}} \Big]\bigg\}\nn\\
   &+\frac{p\t}{2}\big(1+ \tilde{Y}^T_{k\wedge \beta}Q_{r_{k\wedge \beta}}\tilde{Y}_{k\wedge \beta}\big)^{\frac{p}{2}-2}\E\Big[C_{r_{k}}I_{[[0, \beta]]}(k+1)\Big|\mathcal{F}_{t_{k\wedge \beta}} \Big]\nn\\
 \leq  & \big(1+ \tilde{Y}^T_{k\wedge \beta}Q_{r_{k\wedge \beta}}\tilde{Y}_{k\wedge \beta}\big)^{\frac{p}{2}}  \big(1
+  C\t  \big)
 +C\t.
\end{align*}
Repeating this procedure we obtain
$$
  \E\Big[\big(1+ \tilde{Y}^T_{k\wedge \beta}Q_{r_{k\wedge \beta}} \tilde{Y}_{k\wedge \beta}\big)^{\frac{p}{2}} \Big]
      \leq      (1+ C\t )^{k} \left(1+ x^T_{0}Q_{\ell}x_{0}\right)^{\frac{p}{2}} +C\t\sum_{i=0}^{k-1}(1+ C\t )^{i}
      \leq C(1+ C\t )^{k}
$$
for any integer $k$ satisfying $0\leq k\t\leq T$. %Taking expectations on both sides, by  the inequality $ 1+x\leq \exp(x)$ for all $x\geq 0$, yields that
%\begin{align*} %\la{Y_9}
%\E\lf[\left(1+ \tilde{Y}^T_{k\wedge \beta}Q_{r_{k\wedge \beta}} \tilde{Y}_{k\wedge \beta}\right)^{\frac{p}{2}}\rt]\leq  C\exp\left(k C\t\right)
% \leq  C\exp\left(CT\right)
%\end{align*}
%for any integer $k$ satisfying $0\leq k\t\leq T$.
Therefore the required assertion follows from that
$$
    \big( \hat{\f}^{-1}(h(\t ))\big)^{\bar{p}}  \PP {\{\rho_{\t} \leq T\}}\leq \E\big[|\tilde{Y}(T\wedge \rho_{\t}) |^p\big]=\E\big[ |\tilde{Y}_{ \lfloor\frac{T}{\t}\rfloor \wedge \beta}  |^p\big]\leq %\E\left[\left(1+ \tilde{Y}^T_{k\wedge \beta}Q_{r_{k\wedge \beta}} \tilde{Y}_{k\wedge \beta}\right)^{\frac{p}{2}}\right]
  C\exp\left(CT\right),
$$
where $\lfloor\frac{T}{\t}\rfloor$ represents the integer part of $T/\t$.  The proof is complete.
\end{proof}

\subsection{Strong convergence}\label{s3.2}
 In this subsection, we give   the  convergence result of Scheme \eqref{Y_0}.
\begin{theorem}\la{T:C_2}
If   Assumption   \ref{a1} holds with $\bar{p}> 0$, then for any $q\in (0, \bar{p})$,
\be \la{F_0}
\lim_{\t\rightarrow 0} \E |Y(T)-X(T)|^q=0,~~~~\forall ~T> 0.
\ee
\end{theorem}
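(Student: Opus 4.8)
The plan is to run the classical stopping-time comparison for locally Lipschitz coefficients, adapted to both the Markovian switching and the truncation map $\pi_\t^i$. Write $e(t):=Y(t)-X(t)$. Fix $N>|x_0|\vee1$ and set $\tau_N=\inf\{t\ge0:|X(t)|\ge N\}$ together with $\sigma_N=\inf\{t\ge0:|\tilde{Y}(t)|\ge N\}$, and let $\theta_N:=\tau_N\wedge\sigma_N$. Since $h(\t)\to\infty$ and each $\f_i^{-1}$ increases to $\infty$, for every fixed $N$ there is $\t_N>0$ so small that $\hat{\f}^{-1}(h(\t))\ge N$ for all $\t\le\t_N$; on $[0,\sigma_N)$ we then have $|\tilde{Y}_k|<N\le\f_{r_k}^{-1}(h(\t))$, so $\pi_\t^{r_k}(\tilde{Y}_k)=\tilde{Y}_k$, i.e.\ $Y(t)=\tilde{Y}(t)$ and the truncation is inactive. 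Thus on the good event $\{\theta_N>T\}$ both processes stay below $N$ and Scheme \eqref{Y_0} reduces to the plain Euler--Maruyama recursion driven by the true $f,g$. I then split $\E|e(T)|^q=\E[|e(T)|^qI_{\{\theta_N>T\}}]+\E[|e(T)|^qI_{\{\theta_N\le T\}}]$ and treat the two terms separately.

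First I would dispose of the tail. By H\"older's inequality with exponents $\bar{p}/q$ and $\bar{p}/(\bar{p}-q)$,
\[
\E\big[|e(T)|^q I_{\{\theta_N\le T\}}\big]\le \big(\E|e(T)|^{\bar{p}}\big)^{q/\bar{p}}\big(\PP\{\theta_N\le T\}\big)^{1-q/\bar{p}}.
\]
Theorems \ref{T:1} and \ref{T:C_1} give $\E|e(T)|^{\bar{p}}\le C(\E|X(T)|^{\bar{p}}+\E|Y(T)|^{\bar{p}})\le C$, while \eqref{eq0-1} bounds $\PP\{\tau_N\le T\}\le C/N^{\bar{p}}$, and an argument identical to Lemma \ref{L:C_1} (stopping the numerical solution at the fixed level $N$ rather than at $\f^{-1}_{\bar{r}(t)}(h(\t))$, which is legitimate because the truncation is inactive below $N$) together with Chebyshev's inequality yields $\PP\{\sigma_N\le T\}\le C/N^{\bar{p}}$. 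Hence $\PP\{\theta_N\le T\}\le C/N^{\bar{p}}$ and the tail term is at most $C/N^{\bar{p}-q}$. Given $\e>0$, since $q<\bar{p}$ I choose $N$ large enough to make this smaller than $\e/2$, uniformly in $\t\le\t_N$.

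It remains to show that for this fixed $N$ the good part $\E[|e(T)|^qI_{\{\theta_N>T\}}]\le\E|e(T\wedge\theta_N)|^q$ tends to $0$ as $\t\to0$. Introduce the continuous interpolation
\[
\bar{Y}(t):=x_0+\int_0^t f(Y(s),\bar{r}(s))\,\mathrm{d}s+\int_0^t g(Y(s),\bar{r}(s))\,\mathrm{d}B(s),
\]
which satisfies $\bar{Y}(t_k)=\tilde{Y}_k$. On $[0,\theta_N]$ the coefficients along $Y$ are bounded by $M_N:=\max_{|y|\le N,\,i\in\Se}(|f(y,i)|\vee|g(y,i)|)<\infty$, so a one-step estimate gives $\E|Y(s\wedge\theta_N)-\bar{Y}(s\wedge\theta_N)|^2\le C_N\t$. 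Applying It\^o's formula to $|\bar{Y}(t\wedge\theta_N)-X(t\wedge\theta_N)|^2$ and splitting $f(Y,\bar{r})-f(X,r)=[f(Y,\bar{r})-f(X,\bar{r})]+[f(X,\bar{r})-f(X,r)]$ (and similarly for $g$), the first bracket is controlled by the local Lipschitz constant $C_N$, while the second is nonzero only on $\{\bar{r}(s)\neq r(s)\}$, whose probability is $O(\t)$ by the generator structure and which is there bounded by $2M_N$. This leads to
\[
\E|\bar{Y}(t\wedge\theta_N)-X(t\wedge\theta_N)|^2\le C_N\t+C_N\int_0^t\E|\bar{Y}(s\wedge\theta_N)-X(s\wedge\theta_N)|^2\,\mathrm{d}s,
\]
and Gronwall's inequality yields $\E|\bar{Y}(T\wedge\theta_N)-X(T\wedge\theta_N)|^2\le C_N\t\,e^{C_NT}\to0$ (note this $L^2$ quantity is finite for every $\t$, since both processes are bounded by $N$ up to a single terminal increment). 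Combined with the one-step estimate, $\E|e(T\wedge\theta_N)|^2\to0$. For general $q\in(0,\bar{p})$ I interpolate: if $q\le2$ Jensen suffices, while for $2<q<\bar{p}$ the bound $\E|e(T\wedge\theta_N)|^q\le(\E|e(T\wedge\theta_N)|^2)^{\alpha}(\E|e(T\wedge\theta_N)|^{\bar{p}})^{1-\alpha}$ with the uniform $\bar{p}$-moment bound forces convergence to $0$. Choosing $\t$ small makes the good part $<\e/2$, which proves \eqref{F_0}.

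The main obstacle is the bookkeeping where truncation and switching interact. One must verify that the truncation map is genuinely inactive below the level $N$, so that $Y=\tilde{Y}$ and \eqref{Y_0} reduces to the true-coefficient Euler scheme on the good event, and one must absorb the mismatch between the piecewise-constant $\bar{r}(t)$ and the continuous-time chain $r(t)$; the latter is the only place the discrete switching enters the It\^o comparison, and it is handled by the $O(\t)$ jump probability together with the boundedness of $f,g$ on $\{|x|\le N\}$. The secondary technical point is that $q$ may be non-integer and below $2$, which is why the It\^o comparison is carried out at the exponent $2$ (finite on the stopped interval regardless of $\bar{p}$) and then transferred to the desired $q$ by the moment bounds of Theorems \ref{T:1} and \ref{T:C_1}.
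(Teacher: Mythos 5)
Your proposal is correct, and it follows the same skeleton as the paper's proof: localize with stopping times, kill the tail event using the uniform $\bar{p}$-moment bounds of Theorems \ref{T:1} and \ref{T:C_1} plus exit-probability estimates, and prove convergence on the good event where everything is bounded. The differences are in execution, and they are worth noting. First, you stop the numerical solution at the fixed radius $N$ (your $\sigma_N$) rather than at the truncation level as the paper does with $\rho_\t$ from Lemma \ref{L:C_1}; since $\sigma_N\leq\rho_\t$, the lemma does not apply verbatim and you must rerun its Lyapunov argument at level $N$, which you correctly flag and which does go through because the truncation is inactive below $N$ once $\hat{\f}^{-1}(h(\t))\geq N$. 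What this buys you is that the identification "scheme $=$ plain EM with the true coefficients on the good event" becomes airtight: the paper's corresponding step, the claim $Y(t\wedge\theta_{N,\t})=u(t\wedge\theta_{N,\t})$ a.s.\ with $\theta_{N,\t}=\tau_N\wedge\rho_\t$, tacitly requires the numerical path to stay below $N$ (where $f_N=f$, $g_N=g$), whereas its stopping time only forces it below $\f^{-1}_{\bar r(t)}(h(\t))\geq N$; your bookkeeping with $\sigma_N$ closes exactly this gap, at the cost of one extra lemma. Second, on the good event the paper couples the scheme to the classical EM scheme $u$ for the globally Lipschitz truncated system \eqref{F_5} and simply cites its known strong convergence \eqref{F_7}, which is shorter and immediately yields the rate $\t^{q/2}$; you instead re-derive the $L^2$ estimate from scratch via the continuous interpolation, It\^{o}'s formula, the $O(\t)$ bound on $\PP\{\bar r(s)\neq r(s)\}$, and Gronwall, then pass to exponent $q$ by Jensen or interpolation. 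That route is self-contained but note one small gloss: for $2<q<\bar{p}$ your interpolation inequality needs $\E|e(T\wedge\theta_N)|^{\bar{p}}\leq C$, a \emph{stopped} moment bound, which follows from the same stopped Lyapunov computation (or, more simply, from the fact that on the good event both $|X(T)|$ and $|Y(T)|$ are at most $N$, so one can trade powers against $(2N)^{q-2}$); as written, Theorems \ref{T:1} and \ref{T:C_1} only give moments at deterministic times.
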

\begin{proof}
Let $\tau_N$ and $\r_{\t}$ be defined as before.
Define
$$
\theta_{N, \t}=\tau_N \wedge \r_{\t},~~ e_{\t }(T)=X(T)- {Y}(T).
$$
For any $l>0$, using the Young inequality  we obtain that
\begin{align}\la{F_1}
 \E|e_{\t }(T)|^q
 =& \E\lf(|e_{\t }(T)|^q I_{\{\theta_{N, \t} \geq T\}}\rt)
+ \E\lf(|e_{\t }(T)|^q I_{\{\theta_{N, \t} \leq T\}}\rt) \nonumber \\
 \leq & \E\lf(|e_{\t }(T)|^q I_{\{\theta_{N, \t} \geq T\}}\rt)
+ \frac{q l}{\bar{p}}\E\lf(|e_{\t }(T)|^{\bar{p}} \rt)+ \frac{\bar{p}-q}{\bar{p} l^{q/(\bar{p}-q)}} \PP {\{\theta_{N, \t} \leq T\}}.
\end{align}
 It follows from    Theorem  \ref{T:1} and  Theorem \ref{T:C_1} that
$$
 \E |e_{\t }(T)|^{\bar{p}}  \leq  2^{{\bar{p}}} \E |X(T)|^{\bar{p}}
+  2^{{\bar{p}}}\E |Y(T)|^{\bar{p}} \leq    C.
$$
Now let $\e>0$ be arbitrary.  Choose $l>0 $ small sufficiently such that $ {Cq l}/{\bar{p}} \leq \e/3$, then we have
\be\la{F_2}
\frac{q l}{\bar{p}}\E\lf(|e_{\t }(T)|^{\bar{p}}  \rt)\leq \frac{\e}{3}.
\ee
 Then choose $N>1$ large sufficiently such that
 $\frac{C({\bar{p}}-q)}{N^{\bar{p}} {\bar{p}} l^{q/({\bar{p}}-q)}}\leq   \frac{\e}{6}.$
Choose $\t^{*}\in (0, 1]$ small sufficiently such that
$
 \hat{\f}^{-1}(h(\t^*))\geq N.
$
Then for any $\t\in(0, \t^{*}]$,  it follows from  Lemma   \ref{L:C_1}   that
$  \PP {\{\rho_{\t} \leq T\}}\leq   \frac{C }{N^{\bar{p}} }.$
This together with \eqref{eq0-1} implies
\be\la{F_3}
\frac{ {\bar{p}}-q }{   {\bar{p}} l^{q/({\bar{p}}-q)}} \PP {\{\theta_{N, \t} \leq T\}}\leq   \frac{ {\bar{p}}-q }{   {\bar{p}} l^{q/({\bar{p}}-q)}} \Big(   \PP {\{\tau_{N } \leq T\}}+ \PP {\{\r_{  \t} \leq T\}}\Big)\leq \frac{2 C({\bar{p}}-q)}{ N^{\bar{p}} {\bar{p}} l^{q/({\bar{p}}-q)}} \leq \frac{\e}{3}  .
\ee
Combining (\ref{F_1}), (\ref{F_2}) and (\ref{F_3}), we know that for the chosen $N$ and all $\t\in (0, \t^{*}]$,
$$\E|e_{\t }(T)|^q \leq \E\lf(| {e}_{\t }(T)|^q I_{\{\theta_{N, \t} \geq T\}}\rt)+\frac{2\e}{3}. $$
If we can show that
\be\la{F_4}\lim_{\t\rightarrow 0}\E\lf(| {e}_{\t }(T)|^q I_{\{\theta_{N, \t} \geq T\}}\rt)=0,\ee
the required assertion  follows.
For this purpose  we define the truncated functions
$$
f_N(x, i)=f\left(\lf(|x|\wedge N \rt) \frac{x}{|x|}, i\right),~~\hbox{and}~~g_N(x, i)=g\left(\lf(|x|\wedge N \rt) \frac{x}{|x|}, i\right)
 $$
for any $(x, i)\in \RR^n\times\mathbb{S}$. Consider the truncated SDS
\be\la{F_5}
dz(t) =f_N(z(t), r(t))dt +g_N(z(t), r(t))dB(t)
\ee
with  the initial value $z(0)=x_0$ and $r(0)=\ell$. For the chosen $N$,   Assumption \ref{a1} implies that $f_N(\cdot, \cdot)$ and $g_N(\cdot, \cdot)$ are globally Lipschitz continuous with the Lipschitz constant $C_N$. Therefore, SDS (\ref{F_5}) has a unique regular solution $z(t)$ satisfying
\be\la{F_6}
X(t\wedge \tau_N)=z(t\wedge \tau_N)~~\hbox{a.s.}~~~~\forall~ t\geq 0.
\ee
On the other hand, for each $\t\in (0, \t^{*}]$, we apply the EM method to  (\ref{F_5}) and  denote by $u(t)$ the piecewise constant EM solution  (see \cite{Mao06}) which has the property
\be\la{F_7}
\E\lf(\sup_{0\leq t\leq T}|z(t)-u(t)|^q \rt)\leq C_N\t^{q/2}, ~~~~\forall ~T> 0.
\ee
Due to $ \f_i^{-1}(h(\t )) \geq N  $, we have
$Y(t\wedge \theta_{N, \t})=u(t\wedge \theta_{N, \t})$ a.s.
 This together with (\ref{F_6}) implies
\begin{align*}
\E\lf(| {e}_{\t }(T)|^q I_{\{\theta_{N, \t} \geq T\}}\rt)
 =\E\lf(| {e}_{\t }(T\wedge \theta_{N, \t})|^q I_{\{\theta_{N, \t} \geq T\}}\rt)
%&\leq   \E\lf(| z(T\wedge \theta_{N, \t})- u(T\wedge \theta_{N, \t})|^q   \rt)\\
 \leq   \E\Big(\sup_{0\leq t\leq T\wedge \theta_{N, \t}}| z(t)- u(t )|^q   \Big).
\end{align*}
Thus the desired assertion (\ref{F_4}) follows from (\ref{F_7}). The proof is  complete.
\end{proof}

\subsection{Convergence Rate}\la{cov-rates}
In this subsection our aim is to establish the rate of convergence. %under Assumption \ref{a1} and additional conditions on $f$ and $g$.
The rate is optimal similar to the standard results of the  explicit EM scheme for SDSs with globally Lipschitz  coefficients, see \cite[p.115]{Mao06}. To estimate the rate, we need somewhat stronger conditions compared with the convergence alone, which are stated as follows.

\begin{assp}\label{a6}
For some  $\tilde{p}>2 $, there exist  positive constants $L_i$ and $l$ such that
   \begin{align}
   &  2(x-y)^T \big(f(x, i)-f(y, i)\big)     +( \tilde{p}-1 )|g(x, i)-g(y, i)|^2
        \leq  L_i| x-y |^2, \label{cond-1}\\
        &  | f(x, i)-f(y, i)|\leq L_i\big(1+|x|^l+|y|^l\big)|x-y|,   ~~~~ ~\forall x, y \in \RR^d,~~i \in \mathbb{S}.  \label{cond-6}
   % &  | f(x)-f(y)|\leq L(1+|x|^l+|y|^l)|x-y|,~  | g(x)-g(y)|\leq L(1+|x|^l+|y|^l)|x-y|,   ~\forall x, y \in \RR^d.  \label{cond-2}
   \end{align}
  \end{assp}

\begin{rem}  {\rm
One observes that if Assumption \ref{a6} holds, then
 \be\label{cond-5}
 |g(x, i)-g(y, i)|^2\leq  C_i(1+|x|^l+|y|^l)|x-y|^2.
\ee
One also knows that
\begin{align}\label{cond-3}
|f(x, i)|\leq | f(x, i)-f(\mathbf{0}, i)|+|f(\mathbf{0}, i)|\leq L_i(1+|x|^l )|x | +|f(\mathbf{0}, i)|\leq C_i\big(1+|x|^{l+1}\big),
\end{align}
and by Young's inequality, \begin{align}\label{cond-4}
|g(x, i)|\leq   C_i\big(| x  |^2 +|x|^{l+2}\big)^{1/2}+|g(\mathbf{0}, i)|\leq C_i\big(1+|x|^{l/2+1}\big).
\end{align}}
\end{rem}

\begin{rem}  {
 Under Assumption \ref{a6}, we may define $\varphi_i(u)=C_i(1+u^{l})$  for any $u>0$ %$\varphi_i\equiv\varphi$
 in \eqref{e21}, then $\varphi^{-1}_i(u)=\big(u/C_i-1\big)^{1/l}$ for all $u> C_i$. In order to obtain the rate, we specify $h(\t)=K\t^{-\tau}$ for any $\t\in (0, \t^{*}]$, where $\tau\in (0, 1/2]$ will be specified in Lemma \ref{lemma+4}. Thus, $\pi_{\t}^i (x)=\big(|x|\wedge (K\t^{-\tau}/C_i-1)^{1/l}\big)x/|x|$ for any $x\in \mathbb{R}^n, i\in \SS$.
 }
\end{rem}

Making use of Scheme \eqref{Y_0}, we define an auxiliary approximation process by
\begin{align}\label{cond-7}
\bar{Y}(t)%&=y_k+\int_{t_k}^t f(y(s))ds+ \int_{t_k}^t  g(y(s))dB(s), \nonumber\\
&=Y_k+ f(Y_k, r_k)( t-t_k )+   g(Y_k, r_k)( B(t)-B(t_k) ) , ~~~~\forall t\in [t_k, t_{k+1}).
\end{align}
Note that %for any $t_k\in (0, \rho_\t \wedge T]$,
$  \bar{Y}(t_k) =Y(t_k)=Y_k$, that is $\bar{Y}(t)$
and $Y(t)$ coincide with the discrete solution at the grid points.

  \begin{lemma}\la{lemma+1}
   If Assumptions \ref{a1} and \ref{a6} hold, %with $2(l+1)\leq \bar{p}$,
   for any $p\in (0,   \bar{p}/(l+1)] $,  the process defined by (\ref{cond-7}) has the property
  \be\la{cond-8}
   \sup_{0\leq t\leq T}  \E\big(|\bar{Y}(t)-Y(t)|^{p} \big)\leq C \t^{\frac{p}{2} },~~~~  \forall T> 0,
  \ee  where $C$ is a positive constant independent of $\t$.
  \end{lemma}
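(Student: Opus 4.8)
The plan is to estimate the difference $\bar{Y}(t)-Y(t)$ directly on a single subinterval $[t_k, t_{k+1})$ and then take the supremum over the finitely many grid points up to time $T$. For $t\in[t_k,t_{k+1})$, the definition (\ref{cond-7}) together with $\bar{Y}(t_k)=Y(t_k)=Y_k$ gives the explicit representation
\begin{align*}
\bar{Y}(t)-Y(t) = f(Y_k, r_k)(t-t_k) + g(Y_k, r_k)\big(B(t)-B(t_k)\big).
\end{align*}
Hence by the elementary inequality $|a+b|^p\leq 2^p(|a|^p+|b|^p)$ (or $(|a|^p+|b|^p)$ when $0<p\le 1$) I would split the estimate into a drift contribution and a diffusion contribution. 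The drift term is bounded deterministically by $|f(Y_k,r_k)|^p\t^p$, and the diffusion term is controlled after taking expectations via the Burkholder--Davis--Gundy inequality (or the moment formula (\ref{L_2}) for Brownian increments), which yields $\E|g(Y_k,r_k)(B(t)-B(t_k))|^p \le C\,\E\big(|g(Y_k,r_k)|^p\big)\t^{p/2}$.

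The key device is the linear-growth bound (\ref{Y_01}) enforced by the truncation mapping, namely $|f(Y_k,r_k)|\le h(\t)(1+|Y_k|)$ and $|g(Y_k,r_k)|^2\le h(\t)(1+|Y_k|)^2$. Using these I would write the drift contribution as $\le \big(h(\t)\big)^p(1+|Y_k|)^p\t^p$ and the diffusion contribution as $\le C\big(h(\t)\big)^{p/2}(1+|Y_k|)^p\t^{p/2}$. The crucial point is the stepsize constraint (\ref{e22}), $\t^{1/2}h(\t)\le K$, which gives $h(\t)\t^{1/2}\le K$ and therefore $\big(h(\t)\big)^p\t^p=\big(h(\t)\t^{1/2}\big)^p\t^{p/2}\le K^p\t^{p/2}$ for the drift, while $\big(h(\t)\big)^{p/2}\t^{p/2}=\big(h(\t)\t^{1/2}\big)^{p/2}\le K^{p/2}$ times $\t^{p/4}$ — here I must be careful, so instead I would factor one power of $\t^{1/2}$ out of the diffusion term as $\big(h(\t)\t^{1/2}\big)^{p/2}\t^{p/4}$; to recover the clean rate $\t^{p/2}$ I would rather keep the diffusion estimate as $C\,\E\big((h(\t))^{p/2}(1+|Y_k|)^p\big)\t^{p/2}$ and absorb $(h(\t))^{p/2}$ against the moment bound. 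Both contributions thus reduce to controlling $\E\big((1+|Y_k|)^p\big)$ (times at most a bounded power of $h(\t)\t^{1/2}$), which is finite and uniformly bounded by Theorem \ref{T:C_1}, since $p\le \bar p/(l+1)\le\bar p$.

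The main obstacle I anticipate is handling the factor $h(\t)$ that appears in the diffusion bound but is \emph{not} fully cancelled by the $\t^{1/2}$ powers. The honest resolution is that $p\le\bar p/(l+1)$ is exactly the range that lets me trade growth for moments: rather than use the crude linear bound (\ref{Y_01}), I would use the sharper superlinear bounds (\ref{cond-3})--(\ref{cond-4}) from the remark, writing $|f(Y_k,r_k)|\le C(1+|Y_k|^{l+1})$ and $|g(Y_k,r_k)|\le C(1+|Y_k|^{l/2+1})$, which are $\t$-independent. Then the drift contribution is $\le C\,\E\big((1+|Y_k|^{l+1})^p\big)\t^p$ and the diffusion contribution is $\le C\,\E\big((1+|Y_k|^{l/2+1})^p\big)\t^{p/2}$; both moments $\E|Y_k|^{p(l+1)}$ and $\E|Y_k|^{p(l/2+1)}$ are uniformly bounded by Theorem \ref{T:C_1} precisely because $p(l+1)\le\bar p$. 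Since $\t^p\le\t^{p/2}$ for $\t\in(0,1]$, combining the two gives the uniform bound $\E|\bar Y(t)-Y(t)|^p\le C\t^{p/2}$, and taking the supremum over $t\in[0,T]$ (equivalently over the relevant indices $k$) completes the proof.
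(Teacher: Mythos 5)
Your final argument is correct and is essentially the paper's own proof: the same one-step decomposition into drift and diffusion parts, the same use of the $\t$-independent polynomial bounds (\ref{cond-3})--(\ref{cond-4}) rather than the $h(\t)$-dependent bound (\ref{Y_01}), and the same appeal to Theorem \ref{T:C_1} (valid precisely because $p(l+1)\leq \bar{p}$) to control $\E|Y_k|^{p(l+1)}$ and $\E|Y_k|^{p(l/2+1)}$, yielding $C\t^{p}+C\t^{p/2}\leq C\t^{p/2}$. Your preliminary detour through (\ref{Y_01}) correctly identifies why that route only gives rate $\t^{p/4}$ for the diffusion term, which is exactly why the paper (and your resolution) avoids it.
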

\begin{proof}  For any $t\in [0, T]$, there is a nonnegative  integer $k$ such that $t\in [t_k, t_{k+1})$. Then,
 \begin{align*}
 \E\big(|\bar{Y}(t)-Y(t)|^{p} \big)&=\E\big(|\bar{Y}(t)-Y(t_k)|^{p} \big)
 \leq  2^{p} \E\big( |  f(Y_k, r_k) |^{p} \big) \t^{p} + 2^{p}\E\big( |  g(Y_k, r_k)|^{p}  |B(t)-B(t_k)|^{p}\big) \\
 &\leq  C\lf( \E |  f(Y_k, r_k) |^{p}\t^{p} +  \E |  g(Y_k, r_k)|^{p} \t^{\frac{p}{2}} \rt).
  \end{align*}
 Due to   (\ref{cond-3}), (\ref{cond-4}) and  Theorem \ref{T:C_1},
  \begin{align*}
 \E\big(|\bar{Y}(t)-Y(t)|^{{p} } \big)
 &\leq C \E \big( 1+ |Y_k|^{l+1}\big)^{p}  \t^{p} + C \E \big( 1+ |Y_k|^{l/2+1}\big)^{p} \t^{\frac{{p}}{2}}\nonumber\\
 &\leq C \t^{\frac{{p}}{2}}+ C \big(\E    |Y_k|^{\bar{p}}\big)^{\frac{(l+1)p}{{\bar{p}}}} \t^{{p}}+ C \big(\E    |Y_k|^{\bar{p}}\big)^{\frac{(l+2)p}{2{\bar{p}}}} \t^{\frac{{p}}{2}}
\leq  C   \t^{\frac{{p}}{2} } .
 \end{align*}
 The required assertion follows.  \end{proof}

Using the techniques in the proof  of Theorem \ref{T:C_1} and Lemma \ref{L:C_1} yields  the following lemmas.
\begin{lemma}\la{lemma+2}
Under Assumption  \ref{a1},  the    process defined by (\ref{cond-7}) has the property that
  \be\la{cond-9}
     \sup_{0<\t \leq 1}\sup_{0\leq t\leq T}\E|\bar{Y}(t)|^{p}\leq C,~~~~\forall T>0,
  \ee
 for all $p\in (0, \bar{p}]$,  where $C$ is a positive constant independent of $\t$.
  \end{lemma}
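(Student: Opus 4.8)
Lemma \ref{lemma+2} claims a uniform bound on the $p$th moment of the auxiliary process $\bar{Y}(t)$ for all $p\in(0,\bar{p}]$, paralleling the discrete moment bound already established in Theorem \ref{T:C_1} for the numerical solution $Y_k$ at the grid points.
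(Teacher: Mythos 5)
Your proposal is not a proof: it only restates the claim and observes that it ``parallels'' Theorem \ref{T:C_1}. That observation by itself is a genuine gap, because $\bar{Y}(t)$ is not one of the grid values covered by Theorem \ref{T:C_1}. For $t\in[t_k,t_{k+1})$ one has
\begin{equation*}
\bar{Y}(t)=Y_k+f(Y_k,r_k)(t-t_k)+g(Y_k,r_k)\big(B(t)-B(t_k)\big),
\end{equation*}
and the coefficients here obey only the truncated linear growth bound \eqref{Y_01}, i.e. $|f(Y_k,r_k)|\leq h(\t)(1+|Y_k|)$ and $|g(Y_k,r_k)|^2\leq h(\t)(1+|Y_k|)^2$, where $h(\t)\to\infty$ as $\t\to 0$. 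So it is not automatic that the intermediate-time moments stay bounded uniformly in $\t$; one must check that the divergence of $h(\t)$ is compensated by the smallness of the time increment, which is exactly what the condition $\t^{1/2}h(\t)\leq K$ in \eqref{e22} provides. Concretely, using the independence of $B(t)-B(t_k)$ from $\mathcal{F}_{t_k}$, the elementary inequality $(a+b+c)^p\leq C_p(a^p+b^p+c^p)$, and $\E|B(t)-B(t_k)|^p\leq C_p\t^{p/2}$, one gets
\begin{equation*}
\E|\bar{Y}(t)|^{p}\leq C\Big(\E|Y_k|^{p}+h(\t)^{p}\t^{p}\,\E(1+|Y_k|)^{p}
+h(\t)^{p/2}\t^{p/2}\,\E(1+|Y_k|)^{p}\Big)\leq C\big(1+\E|Y_k|^{p}\big),
\end{equation*}
since $h(\t)^{p}\t^{p}\leq K^{p}\t^{p/2}\leq K^{p}$ and $h(\t)^{p/2}\t^{p/2}\leq K^{p/2}$; only now does Theorem \ref{T:C_1} finish the argument by bounding $\sup_{0\leq k\t\leq T}\E|Y_k|^{p}$ uniformly in $\t$. (This is also essentially what the paper means when it says the lemma follows by the techniques of Theorem \ref{T:C_1} and Lemma \ref{L:C_1}: either supply the increment estimate above, or redo the conditional Taylor/Lyapunov expansion of Theorem \ref{T:C_1} with $\t$ replaced by $t-t_k$.) Without some such step, your proposal has no content beyond the statement of the lemma.
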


\begin{lemma}\la{lemma+3}
Let Assumption  \ref{a1}  hold.  Define
 \be\la{cond-10}
\bar{ \rho}_{\t} := \inf \{ t\geq 0: | {\bar{Y}}(t)|\geq  \varphi^{-1}_{\bar{r}(t)}(h(\t))\}.
 \ee
Then for any $T>0$,
  \be\la{cond-11}
   \PP {\{\bar{\rho}_{\t} \leq T\}}\leq \frac{C }{ \big( \hat{\f}^{-1}(h(\t ))\big)^{\bar{p}} }
  \ee
for all $p\in (0, \bar{p}]$,  where $C$ is a positive constant independent of $\t$.
  \end{lemma}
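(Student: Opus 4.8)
The plan is to treat Lemma~\ref{lemma+3} as the continuous-interpolation analogue of Lemma~\ref{L:C_1} and to run the same stopped Lyapunov recursion, the only genuinely new feature being that $\bar\rho_\t$ may occur strictly inside a grid interval. I would first record the geometric reduction. Since on each $[t_k,t_{k+1})$ the threshold $\f^{-1}_{\bar r(t)}(h(\t))$ is constant and $\bar{Y}$ is continuous, while at a grid point $\bar{Y}$ is reset to $Y_{k+1}=\pi_\t^{r_{k+1}}(\tilde{Y}_{k+1})$ with $|Y_{k+1}|\leq\f^{-1}_{r_{k+1}}(h(\t))$, the first passage cannot be triggered by a reset and must be a continuous crossing inside some interval; hence $|\bar{Y}(\bar\rho_\t)|=\f^{-1}_{\bar r(\bar\rho_\t)}(h(\t))\geq\hat{\f}^{-1}(h(\t))$ on $\{\bar\rho_\t\leq T\}$. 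Choosing the exponent $p=\bar{p}$ this yields
\be
\big(\hat{\f}^{-1}(h(\t))\big)^{\bar{p}}\,\PP\{\bar\rho_\t\leq T\}\leq\E\big[|\bar{Y}(T\wedge\bar\rho_\t)|^{\bar{p}}\big],
\ee
so the whole statement reduces to a uniform-in-$\t$ moment bound for the stopped interpolation.

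To establish $\E[|\bar{Y}(T\wedge\bar\rho_\t)|^{\bar{p}}]\leq C$ I would propagate $V(x,i)=(1+x^TQ_ix)^{p/2}$ along the grid exactly as in \eqref{E:H2}--\eqref{Y_6}. The key simplification is that on $[t_k,t_{k+1})$ both the state $\bar r(t)=r_k$ and the coefficients $f(Y_k,r_k),g(Y_k,r_k)$ are frozen, so expanding $(1+\bar{Y}(t)^TQ_{r_k}\bar{Y}(t))^{p/2}=(1+Y_k^TQ_{r_k}Y_k)^{p/2}(1+\xi(t))^{p/2}$ around $Y_k$, with $t-t_k$ and $B(t)-B(t_k)$ replacing $\t$ and $\t B_k$, reproduces the conditional estimates \eqref{Y_4}--\eqref{Y_6} verbatim, now with $\psi(Y_k,r_k)$ and $|Y_k^TQ_{r_k}g(Y_k,r_k)|^2$ evaluated at the common argument $Y_k$; Assumption~\ref{a1} through \eqref{eq02} then controls the leading term. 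At each grid point the truncation $\tilde{Y}_{k+1}\mapsto Y_{k+1}$ does not increase $V$, and the Markov switch $r_k\to r_{k+1}$ contributes only an $O(\t)$ generator term via Lemma~\ref{Z:1}. Assembling the within-interval and grid-point estimates gives a one-step inequality $\E[V_{k+1}]\leq(1+C\t)\E[V_k]+C\t$ for the stopped values $V_k=V(\bar{Y}(t_k\wedge\bar\rho_\t),\bar r(t_k\wedge\bar\rho_\t))$, and the discrete Gronwall argument together with $\hat\lambda|x|^2\leq x^TQ_ix\leq\check\lambda|x|^2$ converts this into the desired moment bound.

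The main obstacle is the one place where Lemma~\ref{lemma+3} departs from Lemma~\ref{L:C_1}: there the stopping is grid-indexed, so the survival indicator $I_{\{\beta\geq k+1\}}$ is $\F_{t_k}$-measurable and factors cleanly out of every conditional expectation, whereas $\bar\rho_\t$ can fall strictly between $t_k$ and $t_{k+1}$ and $\{s<\bar\rho_\t\}$ is not $\F_{t_k}$-measurable. I would handle this by applying optional sampling to the per-interval It\^o martingale stopped at $t_{k+1}\wedge\bar\rho_\t$, so that the one-step increment of $\E[V]$ is exactly the expectation of the drift integral $\int_{t_k}^{t_{k+1}\wedge\bar\rho_\t}\mathcal{A}(s)\,\mathrm{d}s$. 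The delicate point is that this drift must be bounded through the same cancellations used above, namely the matching-argument form of \eqref{eq02} and the mean-zero increment identities \eqref{L_2}, rather than by any pathwise estimate of $\mathcal{A}$: a crude bound carries the factor $h(\t)\sim\t^{-1/2}$ coming from \eqref{Y_01}, which would destroy the $(1+C\t)^{k}$ accumulation, so the estimate has to be produced at the level of the expansion \eqref{E:H2} with the stopping carried inside, as in the passage \eqref{FF_3}--\eqref{FF_6} of Lemma~\ref{L:C_1}.
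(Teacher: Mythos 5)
Your skeleton (Chebyshev at the stopped time, then a uniform-in-$\t$ bound on $\E|\bar{Y}(T\wedge\bar\rho_\t)|^{\bar p}$ via a one-step stopped recursion) is the right one, and it is what the paper's one-line pointer to Theorem \ref{T:C_1} and Lemma \ref{L:C_1} intends; your reduction is also fine, even though the claim that a reset can never trigger the passage is inaccurate (when the truncation is active, $|\bar Y(t_{k+1})|=|Y_{k+1}|$ \emph{equals} the threshold $\f^{-1}_{r_{k+1}}(h(\t))$, so $\bar\rho_\t$ can occur at a grid point), because in every case one still has $|\bar Y(\bar\rho_\t)|\geq \hat\f^{-1}(h(\t))$. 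The genuine gap is exactly at the step you flag as delicate. Write $\tau_k=(t_{k+1}\wedge\bar\rho_\t)-t_k$ and $\delta_k=B(t_{k+1}\wedge\bar\rho_\t)-B(t_k)$. Doob's optional sampling does give $\E[\delta_k|\mathcal{G}_{t_k}]=0$ and $\E[\delta_k\delta_k^T|\mathcal{G}_{t_k}]=\E[\tau_k|\mathcal{G}_{t_k}]\,\mathbb{I}_d$, which handles the terms of the stopped expansion that are purely linear or purely quadratic in the Brownian increment (the two leading terms then still combine through \eqref{eq02} because they carry the common factor $\E[\tau_k|\cdot]\leq\t$). What neither optional sampling nor \eqref{L_2} handles are the \emph{mixed} terms, e.g.\ the drift--diffusion cross term $2f^T(Y_k,r_k)Q_{r_k}g(Y_k,r_k)\,\tau_k\,\delta_k$. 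In Lemma \ref{L:C_1} its analogue vanishes because the elapsed time there is $\t\,I_{[[0,\beta]]}(k+1)$ with an $\F_{t_k}$-measurable indicator that factors out of the conditional expectation, so \eqref{L_2} applies; here $\tau_k$ is a genuinely random, $B$-dependent time, and one computes $\E[\tau_k\delta_k|\F_{t_k}]=-\E\big[I_{\{\bar\rho_\t<t_{k+1}\}}(t_{k+1}-\bar\rho_\t)\big(B(\bar\rho_\t)-B(t_k)\big)\big|\F_{t_k}\big]\neq 0$ in general. The same defect hits the second- and third-moment estimates of the stopped $\hat\zeta_k$: odd stopped moments of $\delta_k$ no longer vanish.

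This is fatal for the recursion as you set it up: bounding $|\E[\tau_k\delta_k|\F_{t_k}]|\leq\t\,(d\,\E[\tau_k|\F_{t_k}])^{1/2}\leq \sqrt{d}\,\t^{3/2}$ and using $|f(Y_k,r_k)||g(Y_k,r_k)|/(1+Y_k^TQ_{r_k}Y_k)\leq C h^{3/2}(\t)$ from \eqref{Y_01}, the cross term contributes a per-step error of order $(h(\t)\t)^{3/2}\leq K^{3/2}\t^{3/4}$ by \eqref{e22}, which over $\lfloor T/\t\rfloor$ steps accumulates to $O(\t^{-1/4})$ and destroys the $(1+C\t)^k$ Gronwall bound. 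Closing the gap needs an idea beyond repeating \eqref{FF_3}--\eqref{FF_6} with the stopping carried inside. One workable fix: stop the recursion additionally at $\beta_{1/2}=\inf\{k:|\tilde Y_k|\geq\tfrac12\hat\f^{-1}(h(\t))\}$ (rerunning Lemma \ref{L:C_1} with this lower threshold bounds $\PP\{\t\beta_{1/2}\leq T\}$ by $C/(\hat\f^{-1}(h(\t)))^{\bar p}$); before $\beta_{1/2}$, an exit of $\bar Y$ inside an interval forces $\sup_{t\in[t_k,t_{k+1}]}|B(t)-B(t_k)|\geq c\,h^{-1/2}(\t)$, an event of probability at most $2d\exp\big(-c'/(K\t^{1/2})\big)$ by Gaussian tails and \eqref{e22}, and all the surviving mixed terms carry a square root of this factor, so they are $o(\t)$ per step and the recursion closes; the lemma then follows by Chebyshev applied to the combined stopping time. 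Without an ingredient of this kind your argument (and, to be fair, the paper's unproved assertion) does not go through.
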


\begin{lemma}\la{lemma+4}
 If Assumptions  \ref{a1} and  \ref{a6} hold with  $4(l+1)\leq \bar{p}$,   the process defined by (\ref{cond-7})  with $\tau\in [ l/(\bar{p}-2), 1/2]$  has the property that
\begin{align*}%\la{cond-12}
\E \big| \bar{Y}(T)-X(T)\big|^2\leq C\t,~~~~\forall ~T>0.
\end{align*}
\end{lemma}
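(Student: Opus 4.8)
The plan is to estimate the error $e(t):=X(t)-\bar Y(t)$ through an It\^o analysis of $|e(t)|^2$ on a suitably stopped interval, close the estimate with Gronwall's inequality, and control the complementary event by the escape probabilities (\ref{eq0-1}) and (\ref{cond-11}). I would set $N:=\hat\f^{-1}(h(\t))$ and $\theta_{N,\t}:=\tau_N\wedge\bar\rho_\t$. The point of stopping before $\bar\rho_\t$ is structural: $\bar Y$ defined by (\ref{cond-7}) jumps at the grid points precisely when the truncation $\pi_\t^{r_{k+1}}$ is active, and by the very definition of $\bar\rho_\t$ no truncation can be active strictly before $\bar\rho_\t$ (otherwise $|\bar Y|$ would have reached the threshold $\f^{-1}_{\bar r}(h(\t))$). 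Hence on $[0,\theta_{N,\t})$ one has $Y_k=\tilde Y_k$, $\bar Y$ is a genuine continuous It\^o process, and both $|X|$ and $|\bar Y|$ stay below $N\sim\t^{-\tau/l}$. Splitting $\E|e(T)|^2=\E\big(|e(T)|^2I_{\{\theta_{N,\t}>T\}}\big)+\E\big(|e(T)|^2I_{\{\theta_{N,\t}\le T\}}\big)$, I would bound the tail term by H\"older's inequality as $(\E|e(T)|^{\bar p})^{2/\bar p}\,(\PP\{\theta_{N,\t}\le T\})^{(\bar p-2)/\bar p}$. The moment factor is bounded by Theorem \ref{T:1} and Lemma \ref{lemma+2}, while $\PP\{\theta_{N,\t}\le T\}\le CN^{-\bar p}$ with $\hat\f^{-1}(h(\t))\ge C\t^{-\tau/l}$, so the tail is $\le C\t^{\tau(\bar p-2)/l}$; this is $\le C\t$ exactly because $\tau\ge l/(\bar p-2)$.

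For the main term I would use $\E|e(T)|^2I_{\{\theta_{N,\t}>T\}}\le\E|e(T\wedge\theta_{N,\t})|^2$ and apply It\^o's formula to $|e(t\wedge\theta_{N,\t})|^2$; the stochastic integral is a martingale after stopping and drops out, leaving the drift $2e^T[f(X,r)-f(Y,\bar r)]+|g(X,r)-g(Y,\bar r)|^2$. The core manipulation is the three-term splitting $f(X,r)-f(Y,\bar r)=[f(X,r)-f(\bar Y,r)]+[f(\bar Y,r)-f(\bar Y,\bar r)]+[f(\bar Y,\bar r)-f(Y,\bar r)]$ and the analogous one for $g$. The leading pieces combine through the one-sided Lipschitz condition (\ref{cond-1}), together with $\tilde p>2$, into $\le L_r|e|^2$; the remaining cross terms are absorbed by elementary inequalities, giving the pointwise bound $\le(\check L+2)|e|^2+|\mathrm{II}_f|^2+|\mathrm{III}_f|^2+C(|\mathrm{II}_g|^2+|\mathrm{III}_g|^2)$, where $\mathrm{II}$ collects the regime-mismatch differences $f(\bar Y,r)-f(\bar Y,\bar r)$, $g(\bar Y,r)-g(\bar Y,\bar r)$ and $\mathrm{III}$ the state-mismatch differences $f(\bar Y,\bar r)-f(Y,\bar r)$, $g(\bar Y,\bar r)-g(Y,\bar r)$.

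The discretization remainders $\mathrm{III}$ are handled by (\ref{cond-6}), (\ref{cond-5}) and H\"older: writing $|\mathrm{III}_f|^2\le C(1+|\bar Y|^l+|Y|^l)^2|\bar Y-Y|^2$ and splitting with conjugate exponents $a,b$, the moment factor is controlled by Theorem \ref{T:C_1}/Lemma \ref{lemma+2} and the increment factor is $\big(\E|\bar Y-Y|^{2b}\big)^{1/b}\le C\t$ by Lemma \ref{lemma+1}; feasibility of the exponents ($2la\le\bar p$ with $b\le\bar p/(2(l+1))$) is exactly what forces $4(l+1)\le\bar p$. The regime-mismatch remainders $\mathrm{II}$ are the main obstacle, since they involve the superlinear coefficients and are nonzero only on $\{r(s)\ne\bar r(s)\}$; here the crude truncation bound $|\bar Y|\le N$ is too lossy, so I would instead exploit the independence of $B(\cdot)$ and $r(\cdot)$. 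Conditional on $\mathcal F_{t_k}$ the increment $\bar Y(s)-Y_k$ is a function of $B(s)-B(t_k)$ while $I_{\{r(s)\ne r_k\}}$ is a function of the chain, whence $\E[(1+|\bar Y(s)|^{2(l+1)})I_{\{r(s)\ne r_k\}}\mid\mathcal F_{t_k}]=\E[(1+|\bar Y(s)|^{2(l+1)})\mid\mathcal F_{t_k}]\,\PP\{r(s)\ne r_k\mid\mathcal F_{t_k}\}$ with the last probability $\le C(s-t_k)$ uniformly in the state; summing over the grid yields $\E\int_0^T(|\mathrm{II}_f|^2+|\mathrm{II}_g|^2)\,ds\le C\t$. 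Collecting everything, the It\^o identity reduces to $\E|e(t\wedge\theta_{N,\t})|^2\le C\int_0^t\E|e(s\wedge\theta_{N,\t})|^2\,ds+C\t$, and Gronwall's inequality gives $\E|e(T\wedge\theta_{N,\t})|^2\le C\t$, which combined with the tail estimate completes the proof. I expect the two delicate points to be the justification that $\bar Y$ is continuous before $\bar\rho_\t$ (so that It\^o's formula is legitimate) and the conditional-independence treatment of the switching term, while the exponent bookkeeping that produces the hypotheses $4(l+1)\le\bar p$ and $\tau\ge l/(\bar p-2)$ is the quantitative heart of the estimate.
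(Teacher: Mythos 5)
Your proposal is correct and follows essentially the same route as the paper's proof: stop at the escape times, bound the tail term by the moment estimates of Theorem \ref{T:1} and Lemma \ref{lemma+2} combined with (\ref{eq0-1}) and Lemma \ref{lemma+3} (your H\"older split is equivalent to the paper's Young split, with the identical exponent bookkeeping coming from $\tau\geq l/(\bar{p}-2)$), and handle the main term by the stopped It\^{o} formula, the one-sided Lipschitz condition (\ref{cond-1}), a three-term splitting into state-mismatch and regime-mismatch remainders controlled via Lemma \ref{lemma+1} and the switching probability $\PP\{r(s)\neq r_k\,|\,r_k\}\leq C\t$, and Gronwall's inequality. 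Your only deviations are harmless: you drop the stopping time $\rho_{\t}$, which is indeed redundant since $\bar{\rho}_{\t}\leq\rho_{\t}$ (at grid points the two thresholds trigger simultaneously and only $\bar{\rho}_{\t}$ can trigger between them), and you evaluate the regime-mismatch term at $\bar{Y}(s)$ rather than at the $\mathcal{F}_{t_k}$-measurable $Y_k$ as the paper does, which costs you the (valid, since $B(\cdot)$ and $r(\cdot)$ are independent) conditional-independence factorization that the paper's choice of intermediate point avoids.
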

\begin{proof}
Define $\bar{\theta}_{ \t}=\tau_{ \hat{\varphi}^{-1}(h(\t ))} \wedge \rho_{\t}\wedge \bar{\rho}_{\t}$, $  \Omega_1:= \{\omega:~\bar{\theta}_{  \t} > T\}$, $\bar{e}(t)= \bar{Y}(t)-X(t),$ for  any $t\in[0, T]$,
where  $\tau_N$, $\rho_{\t}$ and $\bar{\rho}_{\t}$ are defined by (\ref{eq0-1}), (\ref{3.18}) and (\ref{cond-10}),  respectively.
Using the Young inequality,   we have
\begin{align}\la{cond-13}
\E| {\bar{e}} (T)|^2 =& \E\lf(|\bar{e} (T)|^2 I_{\Omega_1}\rt)
+ \E\lf(|\bar{e} (T)|^2 I_{\Omega_1^c}\rt) \nonumber \\
 \leq &\E\lf(| {\bar{e}} (T)|^2 I_{\Omega_1}\rt)
+ \frac{2\t}{\bar{p}}\E\lf(|\bar{e} (T)|^{\bar{p}}  \rt)+ \frac{\bar{p}-2}{\bar{p}\t^{2/(\bar{p}-2)}} \PP(\Omega_1^c).
\end{align}
It follows from the results of  Theorem  \ref{T:1} and Lemma \ref{lemma+2} that
\be\label{cond-19}
 \frac{2\t}{\bar{p}}\E\lf(|\bar{e} (T)|^{\bar{p}}  \rt) \leq  C\t.
\ee
 It follows from   \eqref{eq0-1}, \eqref{3.19} and   \eqref{cond-11} that
\begin{align}\la{cond-20}
 \frac{\bar{p}-2}{\bar{p}\t^{2/(\bar{p}-2)}} \PP(\Omega_1^c)  \leq &  \frac{\bar{p}-q}{\bar{p}\t^{2/(\bar{p}-2)}}  \Big( \PP {\{\tau_{\f^{-1}(h(\t)) } \leq T\}} +\PP {\{\rho_{  \t} \leq T\}} + \PP {\{\bar{\rho}_{  \t} \leq T\}} \Big)\nonumber\\
 \leq & \frac{3 (\bar{p}-2)}{\bar{p}\t^{2/(\bar{p}-2)}}   \frac{C}{( \hat{\varphi}^{-1}(h(\t )))^{\bar{p}}}
=  \frac{3 (\bar{p}-2)}{\bar{p}\t^{2/(\bar{p}-2)}}   \frac{C}{(K\t^{-\tau}/C-1)^{\bar{p}/l }}  \leq  C\t.
% \leq  C\t^{\frac{ \bar{p}}{\bar{p}-2 }-\frac{2}{\bar{p}-2}}=C\t.
\end{align}
 On the other hand, note that for any $t\in (0, T \wedge \bar{\theta}_{ \t}] $,
 $$
  {\bar{e}} (t)  = \int_0^t \Big(f(X(s), r(s))-f(Y(s), \bar{r}(s))\Big)\mathrm{d}s+\int_0^t \Big(g(X(s), r(s))-g(Y(s), \bar{r}(s))\Big)\mathrm{d}B(s).
$$
Using the
generalised It\^{o} formula (see,  e.g., \cite[Lemma 1.9, p.49]{Mao06}) yields that
\begin{align*}
&|{\bar{e}} (T \wedge \bar{\theta}_{ \t})|^2\\
 &  =\int_0^{T \wedge \bar{\theta}_{ \t}}  2 \bar{e}^T (s)\big( f(X(s), r(s))-f(Y(s), \bar{r}(s)) \big)+|g(X(s), r(s))-g(Y(s), \bar{r}(s))|^2  \mathrm{d}s + M(T \wedge \bar{\theta}_{ \t}),
\end{align*}
where $
M(t)=2\int_0^t \bar{e}^T (s) \big(g(X(s), r(s))-g(Y(s), \bar{r}(s))\big)\mathrm{d}B(s)
$
 is a local martingale (see, e.g. \cite{Mao06}). Choose a small constant $\iota>0$ such that $1+\iota\leq  \tilde{p} -1$, then the application of Young's  inequality implies
\begin{align}\la{cond-23}
 &\E(|{\bar{e}} (T \wedge \bar{\theta}_{ \t})|^{2} )\nn\\
 &\leq  \E\!\int_0^{T \wedge \bar{\theta}_{ \t}}\! \Big[2 \bar{e}^T (s)\big(f(X(s), r(s))\!-f(\bar{Y}(s), r(s)) \big) + 2 \bar{e}^T (s)\big(f(\bar{Y}(s), r(s))\!-f(Y(s), \bar{r}(s))\big)\nn\\
 &~~~~+(1\!+\iota)|g(X(s), r(s))\!-g(\bar{Y}(s), r(s))|^2
  + \Big(1+\frac{1}{\iota}\Big)|g(\bar{Y}(s), r(s))\!-g(Y(s), \bar{r}(s))|^2\Big]  \mathrm{d}s\nn\\
 &\leq  \E\!\int_0^{T \wedge \bar{\theta}_{ \t}}\! \Big[2 \bar{e}^T (s)\big(f(X(s), r(s))\!-f(\bar{Y}(s), r(s)) \big) +(1\!+\iota)|g(X(s), r(s))\!-g(\bar{Y}(s), r(s))|^2 \nn\\
 &+  |\bar{e} (s)|^2+ |f(\bar{Y}(s), r(s))\!-f(Y(s), \bar{r}(s))|^2
  + \Big(1+\frac{1}{\iota}\Big)|g(\bar{Y}(s), r(s))\!-g(Y(s), \bar{r}(s))|^2\Big]  \mathrm{d}s.
    \end{align}
It follows from   Assumption \ref{a6} and the elementary inequality that
 \begin{align}\la{eqxx}
&\E(|{\bar{e}} (T \wedge \bar{\theta}_{ \t})|^{2} ) \nn\\
  \leq & C \E  \int_0^{T \wedge \bar{\theta}_{ \t} } \Big( |\bar{e}  (s ) |^2  +|f(\bar{Y}(s), r(s)) -f(Y(s), \bar{r}(s))|^2 + |g(\bar{Y}(s), r(s)) -g(Y(s), \bar{r}(s))|^2 \Big) \mathrm{d}s\nn\\
  \leq & C \E\bigg[  \int_0^{T}\!  |\bar{e}  (s \wedge \bar{\theta}_{ \t} ) |^2 \mathrm{d}s +  \int_0^{T }|f(\bar{Y}(s), r(s))\! -f(Y(s), \bar{r}(s))|^2\! + |g(\bar{Y}(s), r(s))\! -g(Y(s), \bar{r}(s))|^2 \mathrm{d}s\bigg]
  \nn\\
  \leq & C \E \bigg[ \int_0^{T}\!  |\bar{e}  (s \wedge \bar{\theta}_{ \t} ) |^2\mathrm{d}s  +  \int_0^{T }|f(\bar{Y}(s), r(s)) -f(Y(s),  {r}(s))|^2 + |g(\bar{Y}(s), r(s)) -g(Y(s),  {r}(s))|^2  \mathrm{d}s\nn\\
  &~~+ \int_0^{T } f( {Y}(s), r(s)) -f(Y(s), \bar{r}(s))|^2 + |g( {Y}(s), r(s)) -g(Y(s), \bar{r}(s))|^2   \mathrm{d}s\bigg]
  \nn\\
  =: & C \E  \int_0^{T }  |\bar{e}(s \wedge \bar{\theta}_{ \t} ) |^2\mathrm{d}s  +J_1+J_2.
\end{align}
Due to (\ref{cond-6}) and (\ref{cond-5}), by Jensen's inequality and Lemma \ref{lemma+1}, one observes
\begin{align}\la{eqxx1}
 J_1&\leq C\E \int_0^{T }\Big(|f(\bar{Y}(s), r(s)) -f(Y(s),  {r}(s))|^2 + |g(\bar{Y}(s), r(s)) -g(Y(s),  {r}(s))|^2 \Big) \mathrm{d}s\nn\\
  &\leq  C\E \int_0^{T }\Big((1+|\bar{Y}(s)|^l+|Y(s)|^l)^2|  {\bar{Y}}(s)   - Y(s) |^2  + (1+|\bar{Y}(s)|^l+|Y(s)|^l) | {\bar{Y}}(s)   - Y(s)|^2 \Big) \mathrm{d}s  \nn\\
   &\leq  C\E \int_0^{T }\Big((1+|\bar{Y}(s)|^{2l}+|Y(s)|^{2l}) |  {\bar{Y}}(s)   - Y(s) |^2   \Big) \mathrm{d}s  \nn\\
   &\leq  C  \int_0^{T }\Big(\E(1+|\bar{Y}(s)|^{2l}+|Y(s)|^{2l})^2\Big)^{\frac{1}{2}}\Big(\E | {\bar{Y}}(s)   - Y(s) |^4   \Big)^{\frac{1}{2}} \mathrm{d}s  \nn\\
     &\leq  C  \t.
\end{align}
Let $j=\lfloor T/\t\rfloor$, by  (\ref{cond-3}), (\ref{cond-4}), using Young's inequality yields
\begin{align*}
  J_2 &\leq
C\sum_{k=0}^{j}\int_{t_k}^{t_{k+1}}\E \Big( |f(Y_k, r(s)) -f(Y_k, r_k)|^2+  |g(Y_k, r(s)) -g(Y_k, r_k)|^2\Big) \mathrm{d}s\\
&=
C\sum_{k=0}^{j}\int_{t_k}^{t_{k+1}} \Big[\E \Big(|f(Y_k, r(s)) -f(Y_k, r_k)|^2+  |g(Y_k, r(s)) -f(Y_k, r_k)|^2\Big)I_{\{r(s)\neq r_k\}}\Big] \mathrm{d}s\nn\\
&\leq
C\sum_{k=0}^{j}\int_{t_k}^{t_{k+1}} \E \Big[ \big(1 +|Y_k|^{l/2+1} +|Y_k|^{l+1}\big)^{2} \E\big(I_{\{r(s)\neq r_k \}}\big|\F_{t_k}\big)\Big] \mathrm{d}s\\
&\leq
C\sum_{k=0}^{j}\int_{t_k}^{t_{k+1}} \E \Big[ \Big(1  +|Y_k|^{2l+2}\big)  \PP{\{r(s)\neq r_k }\big| r_k \}\Big] \mathrm{d}s.
\end{align*}
 It follows from Theorem \ref{T:C_1} and the property of Markov chain $r(\cdot)$ that
\begin{align}\la{eqx}
J_2  \leq
C\t \sum_{k=0}^{j}\int_{t_k}^{t_{k+1}}  \E  \Big(1  +|Y_k|^{2l+2}\big)   \mathrm{d}s\leq C\t.
\end{align}
  Substituting  \eqref{eqxx1}  and \eqref{eqx} into \eqref{eqxx}, applying the Gronwall inequality, we yield that
\begin{align} \label{cond-16}
\E\lf(| {\bar{e}} (T)|^2 I_{\Omega_1}\rt)\leq   \E (|{\bar{e}} (T\wedge \bar{\theta}_{ \t})|^{2} )
\leq   C  \int_0^T \E(|{\bar{e}} (s\wedge \bar{\theta}_{ \t})|^{2}) \mathrm{d}s +C \t \leq C \t.
\end{align}
Inserting (\ref{cond-19}), (\ref{cond-20}) and (\ref{cond-16}) into (\ref{cond-13}) yields the desired assertion.
%\begin{eqnarray*}%\la{cond-21}
%\E| {\bar{e}} (T)|^2  \leq  C \t.
%\end{eqnarray*}
% Therefore,    the
%desired assertion follows.
\end{proof}

By the virtues of Lemmas \ref{lemma+1} and \ref{lemma+4}, we yield the optimal rate of strong convergence.
\begin{theorem}\la{th+1}
 If Assumptions  \ref{a1} and  \ref{a6} hold with  $4(l+1)\leq \bar{p}$,   the numerical solution of Scheme (\ref{Y_0})  with $\tau\in [ l/(\bar{p}-2), 1/2]$   has the property that
\begin{eqnarray*}%\la{cond-22}
\E |  Y(T)-x(T)|^2\leq C\t,~~~~\forall ~T>0.
\end{eqnarray*}
\end{theorem}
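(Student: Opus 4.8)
The plan is to obtain the global error $\E|Y(T)-X(T)|^2$ by a single triangle-inequality splitting through the auxiliary continuous process $\bar{Y}(t)$ of \eqref{cond-7}, since the two pieces of that split are exactly what Lemmas \ref{lemma+1} and \ref{lemma+4} control. Concretely, I would start from the elementary inequality
\begin{align*}
\E|Y(T)-X(T)|^2 \leq 2\,\E|Y(T)-\bar{Y}(T)|^2 + 2\,\E|\bar{Y}(T)-X(T)|^2.
\end{align*}
The first term measures the gap between the piecewise-constant numerical solution $Y(t)$ and the continuous interpolation $\bar{Y}(t)$ (they coincide at the grid points but $T$ need not be a grid point), while the second is the error of the auxiliary process against the true solution. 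Here $x(T)=X(T)$ denotes the exact solution of SDS \eqref{e1}.

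For the second term, Lemma \ref{lemma+4} applies verbatim under the stated hypotheses $4(l+1)\leq\bar{p}$ and $\tau\in[l/(\bar{p}-2),1/2]$, giving $\E|\bar{Y}(T)-X(T)|^2\leq C\t$. For the first term I would invoke Lemma \ref{lemma+1} with $p=2$; this is admissible precisely because $4(l+1)\leq\bar{p}$ forces $2\leq\bar{p}/(l+1)$, so $2$ lies in the allowed range $(0,\bar{p}/(l+1)]$, and the lemma then yields $\sup_{0\leq t\leq T}\E|\bar{Y}(t)-Y(t)|^2\leq C\t^{2/2}=C\t$, in particular $\E|Y(T)-\bar{Y}(T)|^2\leq C\t$. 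Adding the two estimates produces $\E|Y(T)-X(T)|^2\leq C\t$, which is the claim.

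Since both halves of the split are already supplied as lemmas, the remaining work is essentially bookkeeping rather than a genuine obstacle; the one point worth checking is the interplay of the exponents. The real difficulty is buried in Lemma \ref{lemma+4}, whose proof already handles the stopping-time truncation $\bar{\theta}_{\t}$, the moment control through $J_1,J_2$, and the Gronwall step, and in Lemma \ref{lemma+1}, whose rate $\t^{p/2}$ arises from the Brownian increment. Thus the main thing to guarantee in this final theorem is that the single exponent $p=2$ needed for the interpolation error is simultaneously compatible with the order-$\t$ bound required for the auxiliary error, which the condition $4(l+1)\leq\bar{p}$ secures.
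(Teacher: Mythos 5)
Your proposal is correct and is exactly the argument the paper intends: the paper derives Theorem \ref{th+1} directly from Lemmas \ref{lemma+1} and \ref{lemma+4}, i.e.\ by splitting $\E|Y(T)-X(T)|^2$ through the auxiliary process $\bar{Y}(T)$, using Lemma \ref{lemma+4} for $\E|\bar{Y}(T)-X(T)|^2\leq C\t$ and Lemma \ref{lemma+1} (with $p=2$, admissible since $4(l+1)\leq\bar{p}$ gives $2\leq \bar{p}/(l+1)$) for $\E|Y(T)-\bar{Y}(T)|^2\leq C\t$. Your exponent bookkeeping is also right, so nothing is missing.
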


\begin{rem}
 If we can find a uniform function  $\varphi(u)$ such that \eqref{e21} holds for each $i\in\SS$, then we can find  the unform truncation function $\pi_\t(x)$. The second equation in \eqref{Y_0} degenerates to
  $Y_{k}=\pi_\t(\tilde{Y}_{k })$ independent of $r_{k }$.   Note that  the results  of Theorem \ref{T:C_2} and Theorem \ref{th+1} still hold for  this special case. %Moreover, all rest results in this paper also do for this special case.
  One observes that sometimes it is hard to find a uniformly  continuous function  $\varphi(u)\geq \max_{i\in \SS}\varphi_i(u)$ for $ u\in [1, \infty)$. However, for the given  $\t$ and $h(\t)$, it is easy to find $\hat{\varphi}^{-1}(h(\t))$,  the minimum of all $\varphi_i^{-1}(h(\t))$. Let $\pi_\t(x)=  \Big(|x|\wedge \hat{\f}^{-1} (h(\t))\Big)  {x}/{|x|}. $
  Clearly, the property \eqref{Y_01} still holds for Scheme \eqref{Y_0} with $\pi^i_\t(x) \equiv \pi_\t(x)$, which implies Theorem \ref{T:C_2} and Theorem \ref{th+1} still hold.
  On the other hand,   in view of computation,   the   cost of using the uniform scheme is the same as that of using the non-unform one because the value of $r_{k }$ is fixed before each iteration. More precisely, we illustrate it in Example \ref{exp3.1}.
 \end{rem}

\subsection{Numerical examples}\la{exam1}
In order to illustrate the efficiency of Scheme \eqref{Y_0}  we recall the introductory example and present some simulations.
\begin{expl}\la{exp3.1}{\rm
 Consider the  stochastic volatility model   with random switching  between (\ref{sex1}) and (\ref{sex2}) modulated by a Markov chain $r(t)$  with generator $$\Gamma=\left(
  \begin{array}{ccc}
    -4 & 4\\
    0.2 & -0.2\\
  \end{array}
\right),$$
where initial value $x_0=(1, 1)^T$, $\ell=2$ and $B(t)$ is a two-dimensional Brownian motion.
Obviously, its coefficients
\begin{align*}
 f(x,1)=&2.5x\Big(1-|x|\Big), ~~~~~~~~~~~~~~f(x,2)=(1, 2)^T- x,
 \\g(x,1)=&\left(
  \begin{array}{ccc}
    -1 & \sqrt{2}\\
     \sqrt{2}  & 1\\
  \end{array}
\right)|x|^{3/2},~~~~~~ g(x,2)=\left(
  \begin{array}{ccc}
    0.2 & -0.5\\
    1 &  0.4\\
  \end{array}
\right)|x|,
\end{align*}
are locally Lipschitz continuous for any $x\in \mathbb{R}^2$ and $|g(x,1)|^2=6|x|^{3}$, $|g(x,2)|^2=1.45|x|^2$.

In order to represent the simulations by  Scheme \eqref{Y_0},  we divide it into five steps.

\vspace*{4pt}\noindent{\bf Step 1.}  Examine the hypothesis.
Since that
\begin{align*}
&\limsup_{|x|\rightarrow \infty } \displaystyle\frac{(1+|x|^2)\Big[2x^T  f(x, 1)+\trace{\left(g^T(x,1) g(x, 1)\right)}\Big]-(2-5/3)|x^T g(x, 1)|^2}{ |x|^4}
\leq 5,
\end{align*}
and
\begin{align*}
  &\limsup_{|x|\rightarrow \infty } \displaystyle\frac{(1+|x|^2)\Big[2x^T  f(x, 2)+\trace{\left(g^T(x, 2) g(x, 2)\right)}\Big]-(2-5/3)|x^T g(x, 2)|^2}{|x|^4}
\leq -0.64,
\end{align*}
  Assumption \ref{a1} holds for any $0<\bar{p}\leq5/3$. By Theorem \ref{T:1}, the unique regular solution $X(t)$ exists.

\vspace*{4pt}\noindent{\bf Step 2.} Choose $\varphi_i(\cdot)$ and $h(\cdot)$. For any $u\geq 1$, compute
\begin{align*}
 \sup_{|x|\leq u} \dis\left(\frac{ |f(x, 1)|}{1+|x|}\vee\frac{|g(x, 1)|^2 }{(1+|x|)^2}\right)
%= \sup_{|x|\leq u} \dis\left(\frac{   2.5|x| |1-|x||}{1+|x|}\vee\frac{6 |x|^3 }{(1+|x|)^2}\right)
\leq 6 u, ~~
 \sup_{|x|\leq u} \dis\left(\frac{ |f(x, 2)|}{1+|x|}\vee\frac{|g(x, 2)|^2 }{(1+|x|)^2}\right)
%\leq  \sup_{|x|\leq u} \dis\left(\frac{ \sqrt{2(5+|x|^2)} }{1+|x|}\vee  \frac{1.45|x|^2  }{(1+|x|)^2}\right)
\leq  \sqrt{10}.
\end{align*}
Then choose $\varphi_1(u)=6 u$, $\varphi_2(u)=\sqrt{10}$. Obviously, $\varphi_1^{-1}(u)=u/6$, $\varphi_2^{-1}(u)\equiv +\infty$.
We set $h(\t)=18\t^{-1/2}$ and then it satisfies  (\ref{e22}) for any $\t\in (0,1]$. We can therefore conclude by Theorem   \ref{T:C_2} that the numerical solution $Y(t)$ of Scheme \eqref{Y_0} %of the stochastic system with random switching between (\ref{sex1}) and (\ref{sex2})
satisfies
$
\lim_{\t\rightarrow 0} \E |X(T)-Y(T)|^q=0
$
for any $T\in [0,+\infty)$, $0<q<5/3$.

\vspace*{4pt}\noindent{\bf Step 3.} {M{\scriptsize ATLAB}} code.
Next we specify the {M{\scriptsize ATLAB}} code for simulating   $Y(T)$:
\vspace{-1.2em}
\begin{lstlisting}
%MATLAB code for simulating the truncated EM approximation Y(T)
clear all;
Y=[1; 1]; r=2; T=10; dt=2^(-17); Gam=[-4 4;0.2 -0.2]; c=expm(Gam*dt);
dB=sqrt(dt)*randn(2,T/dt); v=3*dt^(-1/2); %Obviously, v>norm(Y);
for n=1:T/dt
    if r==1
        Y=Y+2.5*Y*(1-norm(Y))*dt+...
            [-1  sqrt(2);sqrt(2) 1]*norm(Y)^(3/2)*dB(:,n);
    else
        Y=Y+([1; 2]-Y)*dt+[0.2 -0.5;1 0.4]*norm(Y)*dB(:,n);
    end
    if rand<=c(r,1)
        r=1;
        if norm(Y)>v
            Y=v*Y/norm(Y);
        end
    else
        r=2;
    end
end
\end{lstlisting}

\vspace*{4pt}\noindent{\bf Step 4.} Approximating the error  $\mathbb{E}|X(T)-Y(T)|^q$. Due to no closed-form of the solution,
using Scheme \eqref{Y_0}, we regard  the better approximation with $\t=2^{-19}$  as  the exact solution $X(t)$ and compare it with the numerical solution $Y(t)$ with $\t=2^{-8}, 2^{-9}, \dots, 2^{-17}$. To compute the approximation error, we run $M$ independent trajectories where $X^{(j)}(t)$ and $Y^{(j)}(t)$ represent the $j$th trajectories of  the exact solution $X(t)$ and  the numerical solution $Y(t)$ respectively. Thus
$$
 \mathbb{E}|X(T)-Y(T)|^q = \frac{1}{M}\sum_{j=1}^{M}
| X^{(j)}(T)-Y^{(j)}(T)|^q.
$$
%\vspace*{-3pt}
\noindent{\bf Step 5.}  The log-log error   plot with $M=1000$.  The simulation procedure is carried out by steps 3 and 4.  The   blue solid line depicts  log-log error   while the red dashed  is  a reference line of slope $1/2$ in Figure \ref{errorfig1}. Figure \ref{errorfig1} depicts the approximation error $\mathbb{E}|X(10)-Y(10)|$ of the exact solution  and the numerical solution of Scheme \eqref{Y_0}  as the function of stepsize $\t\in \{2^{-8},2^{-9},\dots,2^{-17}\}$.  %Hence, our theoretical results  are  effective.
One observes that the schemes  proposed in \cite{Mao20152,Sabanis13,Hutzenthaler12}  are quite sensitive to the high nonlinearity of the diffusion coefficient, which don't work for the above equation. However, the performance of Scheme \eqref{Y_0}  is very nice for this case.
\begin{figure}[!ht]%[!htbp]
  \centering%\label{fig:a}
\includegraphics[width=12cm,height=6.2cm]{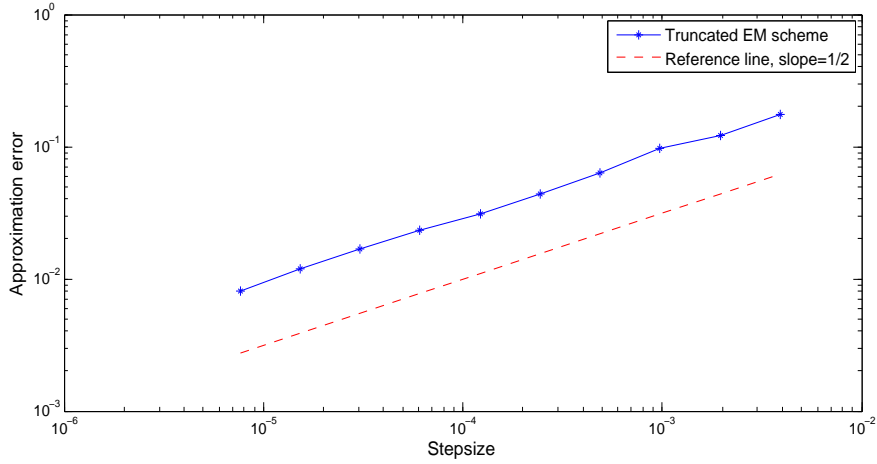}
  \caption{The approximation error $\mathbb{E}|X(10)-Y(10)|$ of the exact solution and the numerical solution by Scheme \eqref{Y_0} as the function of stepsize $\t\in \{2^{-8},2^{-9},\dots,2^{-17}\}$.}
  \label{errorfig1}
\end{figure}
\begin{figure}[!ht]%[!htbp]
  \centering%\label{fig:a}
\includegraphics[width=12cm,height=6.2cm]{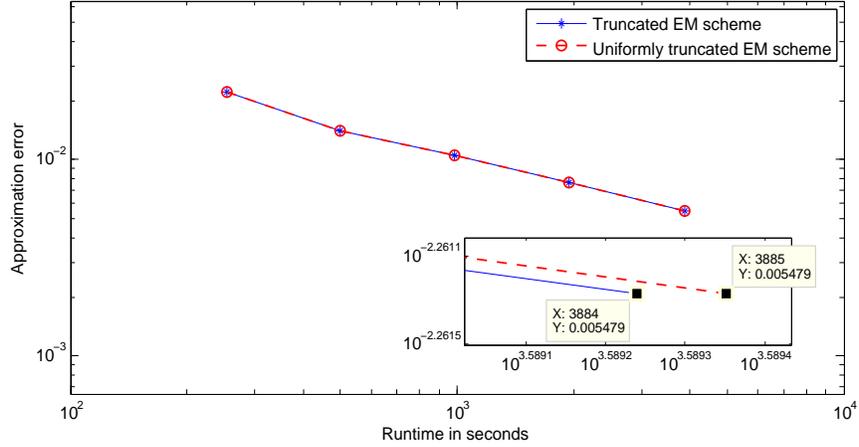}
  \caption{The approximation error $\mathbb{E}|X(1)-Y(1)|$ of the exact solution, and the numerical solutions by the truncated EM scheme dependent on states  and by the uniformly truncated EM scheme, respectively, as functions of the runtime with $\t\in \{2^{-13},2^{-14},\dots,2^{-17}\}$.}
  \label{errorfig1_2}
\end{figure}

  On the other hand, as the claim in Remark 3.14, we can also use the uniform truncation mapping $\pi_\t(x)=  \Big(|x|\wedge 3  \t^{-1/2}\Big)  {x}/{|x|}. $  Now let us  compare the simulation time of   Scheme \eqref{Y_0} with the truncation mapping dependent on the states and with the uniform one $ \pi^i_\t(x)\equiv\pi_\t(x)$.
  Figure \ref{errorfig1_2} depicts the approximation error $\mathbb{E}|X(1)-Y(1)|$ of the exact solution,  and the numerical solutions by the truncated  scheme and  by the uniformly truncated scheme, respectively, as functions of the runtime with $\t\in \{2^{-13},2^{-14},\dots,2^{-17}\}$ and $M=1000$.  When $\t=2^{-17}$,   the runtime of   the truncated  scheme  achieving the accuracy 0.005479 on the  computer with Intel Core 2 duo CPU 2.20GHz, is about 3884 seconds while the runtime  of the uniformly truncated scheme achieving the accuracy 0.005479 is about 3885 seconds on the same computer (see the enlargement in Figure \ref{errorfig1_2}). Thus, the computational cost   of the non-uniform  scheme  is the same as that of the uniform one.
}\end{expl}

Let us discuss another example  to compare our scheme with the implicit EM scheme.
\begin{expl}\la{exp2}{\rm
 Consider the scalar hybrid cubic SDE (i.e. the stochastic Ginzburg-Laudau equation (4.52) in \cite[p.125]{Kloeden})
\begin{align}\la{exyhf4.1}
\!\! \mathrm{d}X(t)=\big(a(r(t))X(t)\!+b(r(t))X^3(t)\big)\mathrm{d}t+\sigma(r(t)) X(t)\mathrm{d}B(t),~~~~ t \geq 0,
\end{align}
 where
   $r(\cdot)$ is the Markov chain taking values in $ \mathbb{S}=\{1, 2 \}$ with generator matrix
   $$  \Gamma=\left(
  \begin{array}{ccc}
    -\gamma_{12} & \gamma_{12}\\
    \gamma_{21} & -\gamma_{21}\\
  \end{array}
\right).
$$ and $b(i)\leq 0$ for any $i\in \mathbb{S}.$
%Direct calculation leads to
%\begin{align*}
% \dis\limsup_{|x|\rightarrow \infty }  \displaystyle\frac{(1+|x|^2)\big[2x^Tf(x,i)+|g(x,i)|^2\big]-(2-\bar{p})|x^Tg(x, i)|^2}{|x|^4}=-\infty,
%% \leq 2(a_i+b_i)+(\bar{p}-1)\sigma_i^2,
%\end{align*}
%which implies  Assumption \ref{a1} holds for any $\bar{p}>0$.
{ Obviously, for any $i\in \mathbb{S}$, $f(x,i)=a(i)x+b(i)x^3$, $g(x,i)=\sigma(i) x$ are locally  Lipschitz continuous.  Assumption \ref{a1} holds for any $\bar{p}>0$.}
Note that there exists a unique regular solution $X(t)$ to SDS \eqref{exyhf4.1} for any initial data $x_0>0$, $\ell\in \mathbb{S}$.
In the same way as in \cite{Kloeden}, we get its closed-form
\begin{align}\la{exact}
X(t) =  \frac{x_0\exp{\bigg\{\dis\int_{0}^{t} \Big[a(r(s))-\frac{1}{2}\sigma^2(r(s))\Big]\mathrm{d}s+\int_{0}^{t}\sigma(r(s))\mathrm{d}B(s)\bigg\}}}
 {\sqrt{1 - 2x^2_0\dis\int_{0}^{t} b(r(s))\exp{\bigg\{\int_{0}^{s}\! \Big[2a(r(u)) -  \sigma^2(r(u))\Big]\mathrm{d}u + \int_{0}^{t} 2\sigma(r(u))\mathrm{d}B(u)\bigg\}}\mathrm{d}s}}.
\end{align}
Clearly, for any $i\in \mathbb{S}$,
$
 \sup_{|x|\leq u} \dis\left(\frac{  |f(x, i)|}{1+|x|} \vee\frac{|g(x, i)|^2 }{(1+|x|)^2}\right)
   \leq  c (u^2+1)
$
for any $u\geq 1$, where $c=(|\breve{b}|\vee |\breve{\sigma}|^2\vee |\breve{a}|), |\breve{b}|=\max_{i\in\mathbb{S}}|b(i)|$.  Thus, we define
$\varphi(u) =c (u^2+1),~\forall u\geq 1,$
  and
$
  h(\t) = \varphi(|x_0|)\t^{-0.2},~\forall \t\in (0, 1].
$
Compute $\varphi^{-1}(u)=\big(u/c-1\big)^{1/2}$ for any $u>c$.  %By the virtue of Theorem \ref{T:C_2}, the numerical solution of Scheme \eqref{Y_0}   approximates the underlying exact solution of SDS (\ref{exyhf4.1}) in the $p$th moment for any $p>0 $.  Moreover, by the virtue of Theorem \ref{th+1} the numerical solution of Scheme \eqref{Y_0}  approximates the exact solution in the root mean square with error estimate $\t^{1/2}$.
Note that Assumption \ref{a5} holds with any $\tilde{p}>2$ and $l=2$. By the virtue of Theorem \ref{th+1} the numerical solution of Scheme \eqref{Y_0}  converges to the exact solution in the root mean square with error estimate $\t^{1/2}$.

We compare the simulations by the implicit EM scheme and by Scheme \eqref{Y_0} for   SDS \eqref{exyhf4.1} with $\gamma_{12}=1$, $\gamma_{21}=4;$ $a(1)=1$,  $b(1)=-1$, $\sigma(1)=2$;
  $a(2)=0.5$, $b(2)=-1$, $\sigma(2)=1$; $x_0=20$, $\ell=1$.
We specify  the truncated EM scheme
\begin{align} \la{yhfTE1}
\left\{
\begin{array}{ll}
Y_0=x_0,~~r_0=\ell,&\\
\tilde{Y}_{k+1}=Y_k+[a(r_k)Y_{k}+b(r_k)(Y_{k})^3]\t +\sigma(r_k)Y_k\t B_k,~~~~~~~ \\
Y_{k+1}=\dis\Big(|\tilde{Y}_{k+1}|\wedge   \sqrt{(x_0^2+1)\t^{-0.2}-1} \Big) \tilde{Y}_{k+1}/|\tilde{Y}_{k+1}|,
\end{array}
\right.
\end{align} and the implicit EM scheme
\begin{align}\la{yhfBE1}
\left\{
\begin{array}{ll}
\bar{Y}_0=x_0,~~r_0=\ell,&\\
\bar{Y}_{k+1}=\bar{Y}_k+ [a(r_k)\bar{Y}_{k+1}+b(r_k)(\bar{Y}_{k+1})^3] \t + \sigma(r_k)\bar{Y}_k\t B_k,
\end{array}
\right.
\end{align}
for any $k=0, 1,\dots, \bar{N}-1$, where $T=2, \bar{N}=\lfloor T/\t\rfloor\geq 7$.
Thanks to Cardano's method, roots of one-dimensional polynomials of degree three are known explicitly. Thus  the implicit  scheme \eqref{yhfBE1}  becomes
\begin{align}\la{yhfBE1+1}
\left\{
\begin{array}{ll}
\bar{Y}_0=x_0,~~r_0=\ell,&\\
D_k=\bar{Y}_k(1+ \sigma(r_k)\t B_k)/(-2b(r_k)\t),&\\
H_k=\sqrt{D_k^2-(1/\t- a(r_k))^3/(3b(r_k))^3},&\\
\bar{Y}_{k+1}=(H_k+D_k)^{1/3}-(H_k-D_k)^{1/3}.
\end{array}
\right.
\end{align}
  Figure \ref{errorfig2} depicts the root mean square  approximation error  with $1000$ sample points  by different schemes as the  functions of runtime, which reveals that  the runtime of the implicit EM scheme \eqref{yhfBE1} achieving the accuracy $2/1000$ is  $1.43$ time than that of the truncated EM scheme \eqref{yhfTE1} with the same stepsize.
\begin{figure}[!ht]%[!htbp]
  \centering%\label{fig:a}
\includegraphics[width=15cm,height=6.4cm]{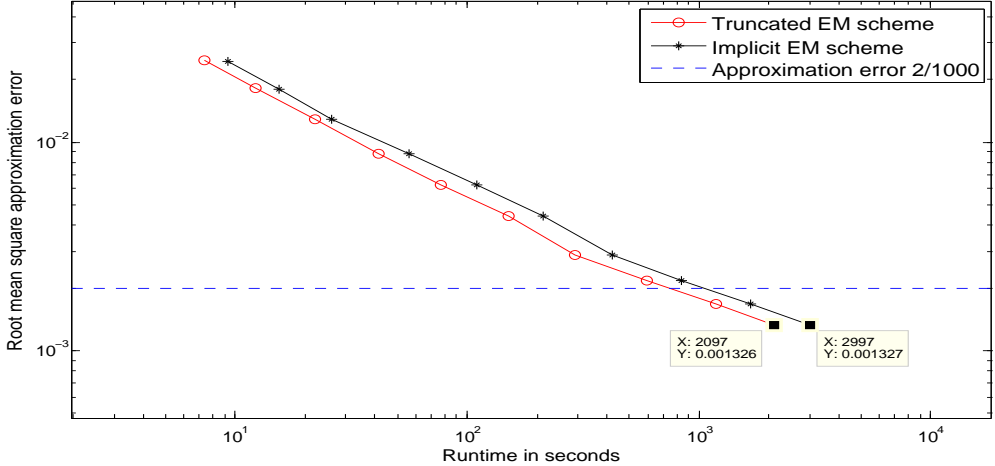}
  \caption{  The black trajectory depicts the root mean square  approximation error $(\mathbb{E}|X(2)-\bar{Y}_{\bar{N}}|^{2})^{\frac{1}{2}}$ of the
exact solution of SDS \eqref{exact} and the numerical solution by the implicit EM scheme (\ref{yhfBE1+1}) while the red trajectory depicts  the  root mean square approximation error $(\mathbb{E}|X(2)-Y_{\bar{N}}|^{2})^{\frac{1}{2}}$ of the
exact solution of SDS \eqref{exact} and the numerical solution by  the  truncated EM scheme (\ref{yhfTE1})  as the functions of the runtime with $\t\in \{2^{-10}, 2^{-11}, \dots, 2^{-19}\}$.}
  \label{errorfig2}
  \vspace*{-8pt}
\end{figure}
}
\end{expl}

\section{Approximation of invariant measure}\la{Inv}
In this section, we are concerned with the existence and uniqueness of   invariant measures  of the exact  and numerical solutions for SDS \eqref{e1}. We reconstruct an explicit scheme and show the existence and uniqueness of   the numerical invariant measure  converging  to  the underlying exact one  in  the Wasserstein metric.
  Our results  cover  a large kind of nonlinear SDSs  with only locally Lipschitz continuous   coefficients.
 For the convenience of invariant measure study we introduce some notations as well as an assumption.
 We write $(X_t^{x_0, \ell},r_t^{\ell})$ in lieu of
$(X(t),r(t))$ to highlight the initial data $(X(0),r(0))=(x_0,\ell)$. For each $N>0$, let $B_{N}(\mathbf{0})=\{x\in \RR^n:|x|\leq N\}$.
 Let $\mathscr{B}(\RR^n)$ denote  the family of all Borel sets in $\RR^n$ and   $\mathcal P(\RR^n\times \SS)$ denote the family of all probability measures on $\RR^n\times \SS$.
  For any $p\in(0,1]$, define a metric on $\RR^n\times \SS$ as follows
$$
d_{p}((x,i),(y,j)):=|x-y|^{p}+I_{\{i\neq j\}},~~~(x,i),(y,j)\in \RR^n\times \SS,
$$
and the corresponding  Wasserstein distance between $\mu,\bar{\mu}\in \mathcal P(\RR^n\times \SS)$ by
\begin{align}
W_{p}(\mu, \bar{\mu}):=\inf_{\pi\in \mathbb{C}(\mu, \bar{\mu})} \int_{\RR^{n}\times \mathbb{S}} \int_{\RR^{n}\times \mathbb{S}} d_{p}(x,y) \pi(dx,dy),%~~p\in(0,1],
\end{align}
where $ \mathbb{C}(\mu, \bar{\mu})$ denotes the collection of all probability measures on $(\RR^{n}\times \mathbb{S})\times (\RR^{n}\times \mathbb{S})$ with marginal measures $\!\mu$ and $\bar{\mu}$. The Wasserstein distance $W_{p}$ admits a duality representation (see, e.g, \cite{Shao2015}), i.e.
\begin{align*}
W_{p}({\mu},\bar{\mu})=
\sup_{\bar{\psi}:\textrm{Lip}(\bar{\psi})\leq 1}\Big\{\int_{\RR^{n}\times \mathbb{S}}\bar{\psi}(x, i)\mathrm{d}  {\mu} -\int_{\RR^{n}\times \mathbb{S}}\bar{\psi}(y, j)\mathrm{d}\bar{\mu}\Big\},
\end{align*}
where $\bar{\psi}$ is a continuous function on $\RR^{n}\times \mathbb{S}$ and
$$
\textrm{Lip}(\bar{\psi}):=\textrm{sup}\bigg\{\frac{\bar{\psi}(x, i)-\bar{\psi}(y, j)}{d_{p}((x, i),(y, j))}:\forall(x, i)\neq (y, j)\in\RR^{n}\times \mathbb{S}\bigg\}.
$$
 From this  section  we always assume   $\gamma(t)$ is {\it irreducible}, %A generator   $\Gamma$ or its corresponding Markov chain $\gamma(t)$ is called {\it irreducible}
 namely, the following linear equation
 \begin{eqnarray}\label{eq:a1.2}
\pi \Gamma=0,\ \ \ \ \ \ \ \  \ \sum_{i=1}^{m}\pi_i=1,
\end{eqnarray}
has a unique solution $\pi=(\pi_1, \dots, \pi_m)\in {\mathbb R}^{1\times m}$  satisfying $\pi_i>0$ for each $i\in \mathbb{S}$. This solution is termed a stationary distribution.
 For any vector $u=(u_1,\dots, u_m)^T$,  any constant $ p>0$,  define
\begin{align}\label{yhfYH_01}
\mathrm{diag}(u):= \mathrm{diag}(u_1,  \dots,   u_m),~\Gamma_{p,u}:= \Gamma+\frac{p}{2}\mathrm{diag}(u),~\eta_{p,u}:= -\!\!\!\max_{\nu\in \mathrm{spec}(\Gamma_{p,u})}\mathrm{Re}(\nu),
\end{align}
  where
$\mathrm{spec}(\Gamma_{p,u})$ and $\mathrm{Re}(\nu)$ denote the spectrum of $\Gamma_{p,u}$ (i.e.  the multiset of its eigenvalues)  and the real part of $\nu$, respectively.
  In order to examine the properties  of the exact solution of SDS (\ref{e1}), we prepare the following lemma.
  \begin{lemma}\la{L:1}
Assume  that $
\pi u:= \sum_{i\in \mathbb{S}}\pi_i u_i<0.
$
Then for any $p\in (0,p^*_u)$,  $\eta_{p,u}>0$, and there is a vector $\xi ^{p,u}=(\xi_1 ^{p,u},\dots,\xi_m ^{p,u})^T\gg \mathbf{0}$ such that
$$
 \sum_{j= 1}^{m}\gamma_{ij}\xi_j^{p,u}+\frac{p u_i}{2}\xi_i^{p,u}=-\eta_{p,u}\xi_i^{p,u}<0,~~~~i=1,\dots, m,
$$
where
\begin{align}\la{yhfEq:1}
 \left\{
\begin{array}{ll}
p^*_u =+\infty ,&\mathrm{if}~~\check{u}  \leq  0,\\
\dis  p^*_u \in \Big(0,\min_{i\in \mathbb{S}, u_i>0}\big\{ -2\gamma_{ii}/u_i\big\}\Big) ,&\mathrm{if}~~\check{u}  >  0.
\end{array}
\right.
\end{align}
  \end{lemma}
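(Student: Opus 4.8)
The plan is to read the claim as a statement about the spectral abscissa of the essentially nonnegative matrix $\Gamma_{p,u}$, and to split the work into an existence part (the positive eigenvector) and a sign part (that $\eta_{p,u}>0$). First I would fix $p>0$ and observe that $\Gamma_{p,u}=\Gamma+\frac{p}{2}\mathrm{diag}(u)$ has the same off-diagonal entries $\gamma_{ij}\ge 0$ ($i\neq j$) as $\Gamma$, so it is irreducible (the irreducibility of $\Gamma$ from \eqref{eq:a1.2} depends only on the zero pattern off the diagonal). Choosing $c>0$ large enough that $\Gamma_{p,u}+cI\ge 0$ entrywise, the matrix $\Gamma_{p,u}+cI$ is irreducible and nonnegative, so the Perron--Frobenius theorem \cite{Chen2007} supplies a simple real Perron root $\rho>0$ with a strictly positive eigenvector $\xi^{p,u}\gg\mathbf{0}$, and $\rho$ dominates the modulus of every other eigenvalue. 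Translating back, $\Gamma_{p,u}$ has the eigenvalue $\rho-c$ with the same eigenvector, and since $\mathrm{Re}(\nu)\le|\nu|\le\rho$ for each eigenvalue $\nu$ of $\Gamma_{p,u}+cI$, this $\rho-c$ equals $\max_{\nu}\mathrm{Re}(\nu)=-\eta_{p,u}$. Thus the eigen-equation $\sum_{j}\gamma_{ij}\xi_j^{p,u}+\frac{p u_i}{2}\xi_i^{p,u}=-\eta_{p,u}\xi_i^{p,u}$ already holds for every $p>0$; what remains is to control the sign of $\eta_{p,u}$.

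For the sign I would left-multiply the eigen-equation by the stationary vector $\pi$ and use $\pi\Gamma=0$ to obtain the identity
\begin{equation*}
-\eta_{p,u}=\frac{p}{2}\,\frac{\sum_{i\in\SS}\pi_i u_i\xi_i^{p,u}}{\sum_{i\in\SS}\pi_i\xi_i^{p,u}} .
\end{equation*}
Because $\pi_i>0$ and $\xi_i^{p,u}>0$, the denominator is positive and the sign of $-\eta_{p,u}$ is that of the weighted average $\sum_i\pi_i u_i\xi_i^{p,u}$. Normalising $\xi^{p,u}$ (say $\max_i\xi_i^{p,u}=1$) and using that the simple Perron eigenpair depends continuously on $p$ with $\xi^{0,u}=\mathbf{1}$, this average tends to $\pi u<0$ as $p\to 0^+$; hence $\eta_{p,u}>0$ for all small $p>0$. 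I would then define $p^*_u$ as the first positive zero of the continuous map $p\mapsto-\eta_{p,u}$ (with $p^*_u=+\infty$ when no such zero exists); by the intermediate value theorem $-\eta_{p,u}$ cannot become nonnegative before $p^*_u$, so $\eta_{p,u}>0$ holds on the whole interval $(0,p^*_u)$.

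It then remains to locate $p^*_u$ as in \eqref{yhfEq:1}. If $\check{u}\le 0$, then $\Gamma_{p,u}\le\Gamma$ entrywise with a strict decrease in at least one diagonal entry (the assumption $\pi u<0$ forces some $u_i<0$); by the strict monotonicity of the Perron root for irreducible nonnegative matrices, $-\eta_{p,u}<\lambda_{\max}(\Gamma)=0$ for every $p>0$, so $p^*_u=+\infty$. If instead $\check{u}>0$, I would use that the Perron root dominates every diagonal entry, giving $-\eta_{p,u}\ge\max_{i}\big(\gamma_{ii}+\frac{p}{2}u_i\big)$; as soon as $p\ge -2\gamma_{ii}/u_i$ for some index with $u_i>0$ the corresponding diagonal entry is nonnegative and $\eta_{p,u}\le 0$, which forces $p^*_u\le\min_{i\in\SS,\,u_i>0}\{-2\gamma_{ii}/u_i\}$, exactly the range in \eqref{yhfEq:1}.

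The main obstacle is controlling the sign across a full interval rather than only near $p=0$: the displayed identity settles the sign for small $p$, where $\xi^{p,u}\approx\mathbf{1}$, but for larger $p$ the weights $\pi_i\xi_i^{p,u}$ may shift enough to flip the weighted average, so a direct sign estimate is not available. I would circumvent this by defining $p^*_u$ through the first zero crossing and invoking continuity of the Perron eigenpair, and then pin its location with the two a priori bounds above (strict monotonicity when $\check u\le0$, diagonal domination when $\check u>0$). A secondary point needing care is the continuity and simplicity of the Perron eigenpair in $p$ together with the identification $\xi^{0,u}=\mathbf{1}$, all of which follow from the Perron--Frobenius framework.
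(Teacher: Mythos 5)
Your proposal is correct, but it reaches the lemma by a genuinely different route than the paper. The paper's own proof is very short: it fixes $p\in(0,p^*_u)$, obtains the sign statement $\eta_{p,u}>0$ by simply citing \cite[Proposition 4.2]{Ba}, and then produces the positive eigenvector by applying Perron--Frobenius to the strictly positive matrix $\exp(t\Gamma_{p,u})$, whose spectral radius is $\exp(-\eta_{p,u}t)$, transferring the simple dominant eigenpair back to $\Gamma_{p,u}$. You instead (i) apply Perron--Frobenius to the shifted matrix $\Gamma_{p,u}+cI$ --- an equivalent and equally standard device for irreducible Metzler matrices, so this part is only cosmetically different --- and, more substantially, (ii) prove the sign claim from scratch rather than by citation: the identity $-\eta_{p,u}=\frac{p}{2}\big(\sum_i\pi_iu_i\xi_i^{p,u}\big)\big/\big(\sum_i\pi_i\xi_i^{p,u}\big)$ obtained by pairing the eigen-equation with the stationary row vector $\pi$, continuity of the simple Perron eigenpair at $p=0$ (where the eigenvector is $\mathbf{1}$), the first-zero-crossing definition of $p^*_u$ together with the intermediate value theorem, strict monotonicity of the Perron root when $\check{u}\le 0$, and diagonal domination when $\check{u}>0$. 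What your argument buys is self-containedness and an explicit characterization of $p^*_u$ as the critical crossing value (plus the structurally informative weighted-average formula for $-\eta_{p,u}$); what the paper's buys is brevity. One small refinement you may want: your diagonal-domination step only yields $p^*_u\le\min_{i\in \mathbb{S},\,u_i>0}\{-2\gamma_{ii}/u_i\}$, whereas \eqref{yhfEq:1} places $p^*_u$ in the open interval; strictness follows because for an irreducible nonnegative matrix of size $m\ge 2$ the Perron root strictly exceeds every diagonal entry (the spectral radius of any proper principal submatrix is strictly smaller), so in fact $-\eta_{p,u}>\gamma_{ii}+\frac{p}{2}u_i$ and the crossing occurs strictly before that minimum. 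This does not affect how the lemma is used, since the conclusions are asserted only for $p$ strictly inside $(0,p^*_u)$.
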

\begin{proof} Fix any  $p\in (0,p^*_u)$. Recalling (\ref{yhfYH_01}) and utilizing  \cite[Proposition 4.2]{Ba} yields  $\eta_{p,u}>0$.  Let $\Gamma_{p,u}(t)=\exp(t \Gamma_{p,u})$.
 Then, the spectral radius of $\mathrm{Ria}(\Gamma_{p,u}(t))$ (i.e. $\mathrm{Ria}(\Gamma_{p,u}(t))=\sup_{\bar{\nu}\in \mathrm{spec}(\Gamma_{p,u}(t))}|\bar{\nu}|$)   equals to $\exp(-\eta_{p,u} t)$.
  Since all
 coefficients of $\Gamma_{p,u}(t)$ are positive, by the Perron-Frobenius theorem (see, e.g., \cite[p.6]{Chen2007}),
  $\exp(-\eta_{p,u} t)$ is a simple eigenvalue of $\Gamma_{p, u}(t)$ and
 there exists an eigenvector
 $
 \xi^{p,u}=(\xi_1^{p,u}, \ldots, \xi_m^{p,u})^T\gg \mathbf{0}
 $
  corresponding to $\exp(-\eta_{p, u} t)$,
 where $\mathbf{0}\in \mathbb{R}^m$ is a zero vector, and $\xi^{p,u}\gg \mathbf{0}$ ($\ll \mathbf{0}$) means each component $\xi_i^{p,u}>0$ ($<0$) for any $1\leq i\leq m$.  Note
 that eigenvector of $\Gamma_{p,u}(t)$ corresponding to $\exp(-\eta_{p, u} t)$ is also an eigenvector of $\Gamma_{p, u}$ corresponding to $-\eta_{p, u}$.   Thus, we obtain that
$
\Gamma_{p,u} \xi^{p,u} =-\eta_{p,u}\xi^{p,u}\ll \mathbf{0}.
$
 Therefore the desired result follows.
\end{proof}

\subsection{Boundedness}\label{bound}
Since it is related closely to the tightness  as well as the ergodicity we go further to investigate the moment boundedness of solutions in infinite time interval. Firstly, we establish the criterion on the moment boundedness of the exact solutions  of SDS (\ref{e1}). Then we show that the numerical solutions  of Scheme \eqref{Y_0} keep this property very well.
\begin{theorem}\la{Tyle2.2+}
Suppose that Assumption \ref{a1} with matrix $Q_i\equiv Q$  and $\pi \alpha<0$ hold, for any $p\in (0,p^*_\alpha)\cap (0,  \bar{p}]$, the
 solution $X(t)$ of SDS (\ref{e1})
 satisfies
\begin{align}\la{Typ}
\sup_{t\geq 0}\mathbb{E}|X(t)|^p\leq C,
  \end{align}
where $C$ is independent of    $t$ and $i$, each $\alpha_i$ is given by Assumption \ref{a1}, and $\alpha:=(\alpha_1, \dots, \alpha_m)^T$.
\end{theorem}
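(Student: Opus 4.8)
The plan is to exploit the ergodic-average hypothesis $\pi\alpha<0$ through a \emph{state-dependent} Lyapunov function whose weights are the Perron--Frobenius eigenvector supplied by Lemma \ref{L:1}, thereby turning a condition that only controls the $\pi$-average of the $\alpha_i$ into a genuinely dissipative drift even when some individual $\alpha_i$ are positive. Since $\pi\alpha<0$ and $p\in(0,p^*_\alpha)$, Lemma \ref{L:1} applied with $u=\alpha$ gives a constant $\eta_{p,\alpha}>0$ and a vector $\xi^{p,\alpha}=(\xi_1^{p,\alpha},\dots,\xi_m^{p,\alpha})^T\gg\mathbf{0}$ with $\sum_{j=1}^m\gamma_{ij}\xi_j^{p,\alpha}+\tfrac{p\alpha_i}{2}\xi_i^{p,\alpha}=-\eta_{p,\alpha}\xi_i^{p,\alpha}$ for each $i\in\SS$. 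I would then take
\[
V(x,i)=\xi_i^{p,\alpha}\,(1+x^TQx)^{\frac p2}.
\]
The weights $\xi_i^{p,\alpha}$ are precisely what redistributes the possibly-positive $\alpha_i$ through the chain's transition structure; the naive unweighted choice $(1+x^TQx)^{p/2}$ only produces a drift coefficient $\tfrac p2(\alpha_i+\kappa_i)$, which need not be negative at every state.

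Next I would compute $\mathcal{L}V$ by reusing the spatial estimate already obtained in the proof of Theorem \ref{T:1}. Crucially, because $Q_i\equiv Q$ the factor $(1+x^TQx)^{p/2}$ is state-independent, so the switching part of the generator acts only on the weights, contributing $\sum_{j=1}^m\gamma_{ij}\xi_j^{p,\alpha}(1+x^TQx)^{p/2}$. Feeding \eqref{eq02} into the calculation \eqref{YH_00} (legitimate for $p\le\bar p$ since $2-p\ge2-\bar p$) and grouping the $(1+x^TQx)^{p/2}$ terms, the bracket becomes $\tfrac{p\alpha_i}{2}\xi_i^{p,\alpha}+\sum_j\gamma_{ij}\xi_j^{p,\alpha}=-\eta_{p,\alpha}\xi_i^{p,\alpha}$ by the eigenvalue identity. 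Dominating the residual $\tfrac{pC_i}{2}\xi_i^{p,\alpha}(1+x^TQx)^{p/2-2}$ via the Young inequality \eqref{YH_2}, I expect to arrive at
\[
\mathcal{L}V(x,i)\le\Big(-\eta_{p,\alpha}+\frac{p\kappa_i}{2}\Big)\xi_i^{p,\alpha}(1+x^TQx)^{\frac p2}+\check{\xi}\,\check{C},
\]
with $\check{\xi}=\max_i\xi_i^{p,\alpha}$ and $\check{C}=\max_iC_i$. Choosing each $\kappa_i\in(0,1)$ small enough that $\tfrac{p\kappa_i}{2}\le\tfrac{\eta_{p,\alpha}}{2}$ (possible since $\eta_{p,\alpha}$ is fixed once $p$ is chosen) yields the dissipative inequality $\mathcal{L}V(x,i)\le-\tfrac{\eta_{p,\alpha}}{2}V(x,i)+K$ with $K=\check{\xi}\,\check{C}$.

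To conclude I would apply the generalized It\^o formula to $e^{(\eta_{p,\alpha}/2)t}V(X(t),r(t))$ along the stopping times $\tau_N$, using $\tfrac{\eta_{p,\alpha}}{2}V+\mathcal{L}V\le K$ to get $e^{\frac{\eta_{p,\alpha}}{2}(t\wedge\tau_N)}\E\,V(X(t\wedge\tau_N),r(t\wedge\tau_N))\le V(x_0,\ell)+\tfrac{2K}{\eta_{p,\alpha}}e^{\frac{\eta_{p,\alpha}}{2}t}$. Letting $N\to\infty$ (legitimate because Theorem \ref{T:1} guarantees regularity, so $\tau_N\to\infty$ a.s.), applying Fatou's lemma, and dividing by the exponential gives $\E\,V(X(t),r(t))\le V(x_0,\ell)+\tfrac{2K}{\eta_{p,\alpha}}$ uniformly in $t$. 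Since $V(x,i)\ge\hat{\xi}\,\lambda_{\min}(Q)^{p/2}|x|^p$ with $\hat{\xi}=\min_i\xi_i^{p,\alpha}>0$, estimate \eqref{Typ} follows. I expect the only real difficulty to lie in the first two paragraphs---constructing the eigenvector-weighted Lyapunov function and verifying its drift---where the hypothesis $Q_i\equiv Q$ is used to collapse the spatial generator term and Lemma \ref{L:1} converts the averaged condition $\pi\alpha<0$ into a pointwise negative drift; the concluding Dynkin--Fatou step is routine (cf. Mao and Yuan \cite[pp.229--249]{Mao06}).
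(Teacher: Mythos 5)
Your proposal is correct and is essentially the paper's own proof: the same eigenvector-weighted Lyapunov function $V(x,i)=\xi_i^{p,\alpha}(1+x^TQx)^{p/2}$ built from Lemma \ref{L:1}, the same generator estimate obtained by feeding \eqref{eq02} and \eqref{YH_2} into the computation of Theorem \ref{T:1} and invoking the eigenvalue identity, the same smallness choice of $\kappa$ relative to $\eta_{p,\alpha}/p$, and the same exponentially weighted It\^{o}/Dynkin argument followed by the lower bound $V(x,i)\geq \hat{\xi}^{p,\alpha}\lambda_{\min}(Q)^{p/2}|x|^p$. The only differences are cosmetic (the paper folds the localization and Fatou step into "taking expectations," which your stopping-time version makes explicit).
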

\begin{proof}
For the given $\alpha $ and any $p>0$,   $\mathrm{diag}(\alpha), ~\Gamma_{p,\alpha},~ \eta_{p,\alpha},~p^*_\alpha$ are defined as \eqref{yhfYH_01} and \eqref{yhfEq:1}. For any $p\in (0,p^*_\alpha)\cap (0,  \bar{p}]$,  Lemma \ref{L:1} implies that $\eta_{p,\alpha}>0$ and there exists  $\xi^{p,\alpha}\gg \mathbf{0}$ such that
\begin{equation}\la{yhfF_101}
 \sum_{j= 1}^{m}\gamma_{ij}\xi^{p,\alpha}_j+\frac{p \alpha_i}{2}\xi^{p,\alpha}_i=-\eta_{p,\alpha}\xi^{p,\alpha}_i<0,~~~~i=1,\dots, m
\end{equation}
holds. Choose $0<\check{\kappa}< ({\eta_{p,\alpha}}/{p})\wedge 1$.
Using the
generalised It\^{o} formula (see,  e.g., \cite[Lemma 1.9, p.49]{Mao06})  together with \eqref{yhfF_101}, (\ref{eq02})-\eqref{YH_2}, yields
\begin{align*}%\la{0yhfF_2}
 &\mathbb{E}\Big[\mathrm{e}^{\frac{\eta_{p,\alpha}}{2} t}\big(1+X^T(t)QX(t)\big)^{\frac{p}{2}}\xi^{p,\alpha}_{r_t}\Big]\nn\\
=& \mathbb{E}\Big[\left(1+x^T_0Qx_0\right)^{\frac{p}{2}}\xi^{p,\alpha}_{\ell}\Big] +\mathbb{E}\int_{0}^{t}\mathrm{e}^{\frac{\eta_{p,\alpha}}{2}s}\Big[\frac{\eta_{p,\alpha}}{2}\big(1+X^T(s)QX(s)\big)^{\frac{p}{2}}\xi^{p,\alpha}_{r_s}\nn\\
 &~~~~~~~~~~~~~~~~~~~~~~~~~~~~~~~~~~~~~~~~~~~~~~~~~~~+ {\cal{L}}\Big(\big(1+X^T(s)QX(s)\big)^{\frac{p}{2}}\xi^{p,\alpha}_{r_s}\Big) \Big]\mathrm{d}s \nn\\
\leq& \left(1+x^T_0Qx_0\right)^{\frac{p}{2}}\xi^{p,\alpha}_{\ell}
+\mathbb{E}\int_{0}^{t}\frac{p\kappa_{r_s}-\eta_{p,\alpha}}{2}\mathrm{e}^{\frac{\eta_{p,\alpha}}{2}s}\big(1+X^T(s)QX(s)\big)^{\frac{p}{2}} \xi^{p,\alpha}_{r_s}   \mathrm{d}s
  +\mathbb{E}\int_{0}^{t}C_{r_s}\mathrm{e}^{\frac{\eta_{p,\alpha}}{2}s}\mathrm{d}s \nn\\
 \leq& \left(1+x^T_0Qx_0\right)^{\frac{p}{2}}\xi^{p,\alpha}_{\ell} + C\int_{0}^{t}\mathrm{e}^{\frac{\eta_{p,\alpha}}{2} s}\mathrm{d}s,
\end{align*}
where
\begin{align*}%\la{0yhfF_2}
 &{\cal{L}}\Big(\big(1+x^TQx\big)^{\frac{p}{2}}\xi^{p,\alpha}_{i}\Big)\nn\\
 =&\frac{p}{2} (1+x^TQx)^{\frac{p}{2}-2}\xi^{p,\alpha}_{i}\Big[
(1+x^TQx) \psi(x,i)
  -(2-p)|x^TQg(x, i)|^2 \Big]
 +\sum_{j=1}^{m}\gamma_{ij}(1+x^TQx)^{\frac{p}{2}}\xi^{p,\alpha}_{j}\nn\\
\leq&\Big[\frac{p}{2}\big(\alpha_i+\kappa_i\big)\xi^{p,\alpha}_{i}
 +\sum_{j=1}^{m}\gamma_{ij}\xi^{p,\alpha}_{j}\Big](1+x^TQx)^{\frac{p}{2}}+C_i.
\end{align*}
One observes that
 $
\mathbb{E}\Big[\big(1+X^T(t)QX(t)\big)^{\frac{p}{2}}\xi^{p,\alpha}_{r_t}\Big]
 \leq   C\mathrm{e}^{-\frac{\eta_{p,\alpha}}{2} t} + C.
$
Therefore the desired assertion follows from that
 \begin{align*}
 \mathbb{E}|X(t)|^{p}\leq \left( \lambda_{\min}(Q) \right)^{-\frac{p}{2}}\big(\hat{\xi}^{p,\alpha}\big)^{-1} \mathbb{E}\Big[\big(1+X^T(t)QX(t)\big)^{\frac{p}{2}}\xi^{p,\alpha}_{r_t}\Big]
\leq  C.
\end{align*}
 The proof is complete. \end{proof}

  In order to show  that the numerical solutions of Scheme \eqref{Y_0}  preserve this asymptotic boundedness perfectly, we further require    the function  $h(\t)$  satisfying
\be\la{F+}
 \t^{1/2-\theta}h(\t)\leq K,~~~\forall~ \t \in (0, 1]
\ee
 for some $0<\theta <\frac{1}{2}$.  Then the asymptotic boundedness of the numerical solution   is  obtained  as follows.
\begin{theorem}\la{4TT:F_1}
 Under the conditions of Theorem \ref{Tyle2.2+}, for any $p\in (0,p^*_\alpha)\cap (0,  \bar{p}]$, there is a constant $ {\t}^*\in (0, 1]$  such that  the   scheme   \eqref{Y_0} has the   property that
\begin{align}\label{ty3.8}
\sup\limits_{ 0< \t \leq {\t}^*}\sup\limits_{k\geq0}\mathbb E|Y_k|^{p}
\leq C,
\end{align}
where  $C$ is independent of  $k$ and  $\t$.
  \end{theorem}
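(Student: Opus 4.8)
The plan is to transplant the infinite-horizon Lyapunov argument of Theorem \ref{Tyle2.2+} to the discrete level, using the same weighted test function $V(x,i):=(1+x^TQx)^{p/2}\xi^{p,\alpha}_i$, where $\xi^{p,\alpha}\gg\mathbf{0}$ is the Perron--Frobenius vector furnished by Lemma \ref{L:1} (available since $\pi\alpha<0$ and $p\in(0,p^*_\alpha)$) and satisfies the eigen-relation \eqref{yhfF_101}. As in Theorem \ref{T:C_1}, Lemma \ref{L-Tay} lets us assume $0<p\le 2$ without loss of generality. Fix $0<\check\kappa<(\eta_{p,\alpha}/p)\wedge 1$ and write, at each step, $(1+\tilde Y_{k+1}^TQ\tilde Y_{k+1})^{p/2}=(1+Y_k^TQY_k)^{p/2}(1+\zeta_k)^{p/2}$ with $\zeta_k$ as in \eqref{E:H2}. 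The crucial simplification is that $Q_i\equiv Q$ makes the difference $Y_k^TQ_{r_{k+1}}Y_k-Y_k^TQ_{r_k}Y_k$ vanish, so that $\zeta_k$ no longer depends on $r_{k+1}$ and the only $r_{k+1}$-dependence left in $V(\tilde Y_{k+1},r_{k+1})$ is the scalar weight $\xi^{p,\alpha}_{r_{k+1}}$.

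First I would compute $\E[V(\tilde Y_{k+1},r_{k+1})\mid\mathcal F_{t_k}]$. Expanding $(1+\zeta_k)^{p/2}$ by \eqref{Y2} and evaluating the conditional moments of $\zeta_k,\zeta_k^2,\zeta_k^3$ exactly as in \eqref{Y_4}, \eqref{yhfY_5}, \eqref{Y_6} (via Lemma \ref{Z:1} and the linear bounds \eqref{Y_01}), while using the conditional independence of the increment $\t B_k$ and the jump $r_{k+1}$ given $\mathcal F_{t_k}$ to factor the weight out through $\E[\xi^{p,\alpha}_{r_{k+1}}\mid\mathcal F_{t_k}]=\xi^{p,\alpha}_{r_k}+\t\sum_j\gamma_{r_kj}\xi^{p,\alpha}_j+o(\t)$, I obtain
\begin{align*}
\E\big[V(\tilde Y_{k+1},r_{k+1})\mid\mathcal F_{t_k}\big]\le (1+Y_k^TQY_k)^{p/2}\Big\{\xi^{p,\alpha}_{r_k}\Big(1+\tfrac{p\t}{2}G(Y_k,r_k)\Big)+\t\textstyle\sum_j\gamma_{r_kj}\xi^{p,\alpha}_j\Big\}+R_k,
\end{align*}
where $G(x,i):=\big[(1+x^TQx)\psi(x,i)+(p-2)|x^TQg(x,i)|^2\big]/(1+x^TQx)^2$ and $R_k$ collects the higher-order remainders.

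The heart of the matter, and the step I expect to be the main obstacle, is showing that this is a genuine contraction rather than the mere $(1+C\t)$ growth that sufficed in Theorem \ref{T:C_1}. Using \eqref{eq02} to bound $G(Y_k,r_k)\le\alpha_{r_k}+\kappa_{r_k}/2+C_{r_k}(1+Y_k^TQY_k)^{-2}$, then invoking the eigen-relation \eqref{yhfF_101} in the form $\tfrac{p}{2}\alpha_{r_k}\xi^{p,\alpha}_{r_k}+\sum_j\gamma_{r_kj}\xi^{p,\alpha}_j=-\eta_{p,\alpha}\xi^{p,\alpha}_{r_k}$, and absorbing the residual $C_{r_k}(1+\cdot)^{p/2-2}$ by the Young inequality \eqref{YH_2} (which yields an additive constant), the bracketed factor is driven below $1-\tfrac{\eta_{p,\alpha}}{2}\t+\tfrac{p\check\kappa}{2}\t$. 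The delicate point is the remainder $R_k$: in Theorem \ref{T:C_1} the Markov jumps contributed terms of exact order $\t$, which here would swamp the negative drift, but precisely because $Q_i\equiv Q$ those jump contributions to the quadratic form cancel, and every surviving remainder carries a factor $h(\t)^a\t^b$ with $b-a/2>1$. Under the strengthened step-size condition \eqref{F+}, $h(\t)\le K\t^{\theta-1/2}$ forces $R_k= o(\t)\,V(Y_k,r_k)+C\t$, so for all sufficiently small $\t\le\t^*$ the factor is at most $1-\tfrac{\eta_{p,\alpha}}{4}\t<1$.

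Finally I would close the argument. The truncation identity $Y_{k+1}^TQY_{k+1}\le\tilde Y_{k+1}^TQ\tilde Y_{k+1}$ (as in Theorem \ref{T:C_1}, and consistent here since $Q_i\equiv Q$) gives $\E[V(Y_{k+1},r_{k+1})\mid\mathcal F_{t_k}]\le(1-\tfrac{\eta_{p,\alpha}}{4}\t)V(Y_k,r_k)+C\t$; taking expectations produces the scalar recursion $v_{k+1}\le(1-c\t)v_k+C\t$ with $c=\eta_{p,\alpha}/4>0$ and $v_k:=\E V(Y_k,r_k)$. Iterating and summing the geometric series,
\begin{align*}
v_k\le(1-c\t)^k V(x_0,\ell)+C\t\sum_{i=0}^{k-1}(1-c\t)^i\le V(x_0,\ell)+\frac{C}{c},
\end{align*}
which is bounded uniformly in both $k$ and $\t\in(0,\t^*]$. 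Since $V(Y_k,r_k)\ge \hat\xi^{p,\alpha}\,(\lambda_{\min}(Q))^{p/2}|Y_k|^p$, the desired estimate \eqref{ty3.8} follows.
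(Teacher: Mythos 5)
Your proposal is correct and follows essentially the same route as the paper's proof: the same weighted Lyapunov function $(1+x^TQx)^{p/2}\xi^{p,\alpha}_i$ built from the Perron--Frobenius vector of Lemma \ref{L:1}, the same reduction to $0<p\le 2$ via Lemma \ref{L-Tay}, the same conditional-moment estimates of $\zeta_k,\zeta_k^2,\zeta_k^3$ through Lemma \ref{Z:1} with the condition \eqref{F+} turning all remainders into $C\t^{1+2\theta}=o(\t)$, the eigen-relation \eqref{yhfF_101} producing the contraction factor $1-\tfrac{\eta_{p,\alpha}}{4}\t$, and the truncation inequality plus geometric-series iteration to close. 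Your observation that $Q_i\equiv Q$ kills the order-$\t$ jump terms in the quadratic form, which would otherwise swamp the negative drift, is exactly the mechanism the paper exploits (and the reason Theorem \ref{Tyle2.2+} imposes that hypothesis).
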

\begin{proof}
For any $p\in (0,p^*_\alpha)\cap (0,  \bar{p}]$,  by the virtue of \eqref{Y_0},  we have
 \begin{align}\la{FE:H2}
(1+\tilde{Y}^T_{k+1}Q\tilde{Y}_{k+1})^{\frac{p}{2}}=\left(1+ Y^T_kQY_k \right)^{\frac{p}{2}}( 1+\varsigma_k)^{\frac{p}{2}}
\end{align}
for any integer $k\geq 0$, where
\begin{align*}
\varsigma_k =\big(1+\! Y^T_kQY_k\big)^{-1}\Big(& 2Y_k^TQ f(Y_k, r_k)\t+\!  \t B_k^Tg^T(Y_k, r_k)Q g(Y_k, r_k)\t B_k+\! 2Y_k^TQg(Y_k, r_k)\t B_k\\
   &+ f^T(Y_k, r_k)Qf(Y_k, r_k)\t^2 +2f^T(Y_k, r_k)Qg(Y_k, r_k)\t B_k \t\Big).
\end{align*}
One observes $\varsigma_k>-1$. By Lemma \ref{L-Tay}, without loss the generality,  we prove \eqref{ty3.8} only for  $0<p\leq 2$.
It follows from \eqref{Y2} and   \eqref{FE:H2} that
\begin{align} \la{FY_3}
  &  \E \Big[ \left(1+\tilde{Y}^T_{k+1}Q\tilde{Y}_{k+1}\right)^{\frac{p}{2}}\xi^{p,\alpha}_{r_{k+1}} \big|\mathcal{F}_{t_k}\Big]  \nonumber\\
 \leq & (1+ Y^T_kQY_k)^{\frac{p}{2}}\bigg\{\E\big[\xi^{p,\alpha}_{r_{k+1}}\big|\mathcal{F}_{t_k}\big]+ \frac{p}{2} \E\big[\varsigma_k\xi^{p,\alpha}_{r_{k+1}}\big|\mathcal{F}_{t_k}\big] + \frac{p(p-2)}{8} \E\big[\varsigma_k^2\xi^{p,\alpha}_{r_{k+1}}\big|\mathcal{F}_{t_k}\big]\nn\\
 &~~~~~~~~~~~~~~~~~~~~~~ + \frac{p(p-2)(p-4)}{48} \E\big[\varsigma_k^3\xi^{p,\alpha}_{r_{k+1}}\big|\mathcal{F}_{t_k}\big]\bigg\},
\end{align}
   where the vector $\xi^{p,\alpha}=(\xi^{p,\alpha}_1,\dots, \xi^{p,\alpha}_m)^T\gg \mathbf{0}$ is given in the proof of Theorem \ref{Tyle2.2+}.    It follows from Lemma \ref{Z:1} that
%\begin{align}\la{FY_F03}
$\E\big[\xi^{p,\alpha}_{r_{k+1}}\big|\mathcal{F}_{t_k}\big]=\xi^{p,\alpha}_{r_{k}}+\sum_{j\in \mathbb{S}}\xi^{p,\alpha}_{j}\left(\gamma_{r_k j}\t+o(\t)\right).
$
Then, making use of the techniques in the proof of Theorem \ref{T:C_1} as well as   \eqref{F+} yields
\begin{align}\la{FY_4}
& \E\big[\varsigma_k\xi^{p,\alpha}_{r_{k+1}}\big|\mathcal{F}_{t_k}\big]  =\E\big[\E\big(\varsigma_k\xi^{p,\alpha}_{r_{k+1}}\big|\mathcal{G}_{t_k}\big)\big|\mathcal{F}_{t_k}\big] \nn\\
\leq&  \Big[ \frac{\psi(Y_k,r_k)\t}{1+Y^T_kQY_k}+ C\t^{1+2\theta}\Big] \Big[\xi^{p,\alpha}_{r_{k}}+\sum_{j\in \mathbb{S}}\xi^{p,\alpha}_{j}\left(\gamma_{r_k j}\t+o(\t)\right)\Big]
\leq    \frac{\psi(Y_k,r_k)\xi^{p,\alpha}_{r_{k}}\t}{1+Y^T_kQY_k}+ C\t^{1+2\theta}.
\end{align}
One further observes that
\begin{align}\la{FY_5}
&\E\Big[\varsigma_k^2\xi^{p,\alpha}_{r_{k+1}}|\F_{t_k}\Big]=\E\big[\E\big(\varsigma_k^2\xi^{p,\alpha}_{r_{k+1}}\big|\mathcal{G}_{t_k}\big)\big|\mathcal{F}_{t_k}\big]\nn\\
% =&{(1+Y^T_kQY_k)^{-2 }}\E  \Big[\E  \Big[\Big(2Y_k^TQ f(Y_k, r_k)\t+ \t B_k^Tg^T(Y_k, r_k)Q g(Y_k, r_k)\t B_k \nonumber\\
%&~~~~~~~~~~~~~~~+2Y_k^TQg(Y_k, r_k)\t B_k+ f^T(Y_k, r_k) Qf(Y_k, r_k) \t^2\nn\\
%&~~~~~~~~~~~~~~~+2f^T(Y_k, r_k)Qg(Y_k, r_k)\t B_k \t\Big)^2 \xi^{p,\alpha}_{r_{k+1}}  \big|\mathcal{G}_{t_k} \Big]\big|\F_{t_k} \Big]\nonumber\\
% \geq &{(1+Y^T_kQY_k)^{-2 }} \bigg\{\E\Big[\E\Big(|2Y_k^TQg(Y_k, r_k)\t B_k|^2\big|\mathcal{G}_{t_k} \Big)\xi^{p,\alpha}_{r_{k+1}}\big|\F_{t_k} \Big]\nn\\
% & +4\E\Big[\E\Big[\Big(Y_k^TQg(Y_k, r_k)\t B_k\Big)\Big(2Y_k^TQ f(Y_k, r_k)\t\nn\\
%& ~~~~~~~~~~~~~~~~+ \t B_k^Tg^T(Y_k, r_k)Q g(Y_k, r_k)\t B_k+f^T(Y_k, r_k) Qf(Y_k, r_k)\t^2 \nn\\
% &~~~~~~~~~~~~~~~~+2f^T(Y_k, r_k)Qg(Y_k, r_k)\t B_k \t\Big)\big|\mathcal{G}_{t_k} \Big]\xi^{p,\alpha}_{r_{k+1}}\Big|\F_{t_k} \Big]\bigg\}\\
%= & 4(1+Y^T_kQY_k)^{-2 } \Big( | Y_k^TQg(Y_k, r_k)|^2 \t\nn\\
%   &~~~~~~~~~~~~~~~~~~~~~~ +2\trace{\Big(\big(Y_k^TQg(Y_k, r_k)\big)^Tf^T(Y_k, r_k)Qg(Y_k, r_k)\Big)} \t^2\Big)\E\big[\xi^{p,\alpha}_{r_{k+1}}\big|\mathcal{F}_{t_k}\big]\nonumber\\
\geq &4(1+Y^T_kQY_k)^{-2 }\Big( | Y_k^TQg(Y_k, r_k)|^2 \t
    -2|Q|^2(1+|Y_k|)^4 h^2(\t)  \t^2 \Big)\E\big[\xi^{p,\alpha}_{r_{k+1}}\big|\mathcal{F}_{t_k}\big]\nonumber\\
 %\geq &\Big[\frac{4| Y_k^TQg(Y_k, r_k)|^2 \t}{(1+Y^T_kQY_k)^2} -Ch^2(\t)\t^{2}\Big]\Big[\xi^{p,\alpha}_{r_{k}}+\sum_{j\in \mathbb{S}}\xi^{p,\alpha}_{j}\left(\gamma_{r_k j}\t+o(\t)\right)\Big]\nn\\
  \geq & \frac{4| Y_k^TQg(Y_k, r_k)|^2\xi^{p,\alpha}_{r_{k}} \t}{(1+Y^T_kQY_k)^2} -C\t^{1+2\theta}.
\end{align}
From  \eqref{Y_01},  \eqref{L_2} and \eqref{F+}, one obtains
\begin{align}\la{FY_6}
&\E\Big[\varsigma_k^3\xi^{p,\alpha}_{r_{k+1}}|\F_{t_k}\Big]=\E\Big[\E\big(\varsigma_k^3\xi^{p,\alpha}_{r_{k+1}}\big|\mathcal{G}_{t_k}\big)\big|\mathcal{F}_{t_k}\Big]\nn\\
\leq&  C{(1+Y_k^TQY_k)^{-3 }}   \bigg[  |Y_k|^3 |f(Y_k, r_k)|^3\t^3+  |g(Y_k, r_k)|^6\t^3
 + |f(Y_k, r_k)|^6 \t^6
  \nonumber\\
+& |g(Y_k, r_k)|^2\Big( |Y_k|^2 + |f(Y_k, r_k)|^2 \t^2\Big) \t^2\Big( |Y_k||f(Y_k, r_k)|  +  |g(Y_k, r_k)|^2
 + |f(Y_k, r_k)|^2\t \Big) \bigg] \E\big[\xi^{p,\alpha}_{r_{k+1}}\big|\mathcal{F}_{t_k}\big]\nonumber\\
\leq&  C{(1+Y_k^TQY_k)^{-3 }}(1+|Y_k|)^6 \Big[ h^3(\t)\t^3
 +  h^6(\t)  \t^6
  \nonumber\\
&~~~~~~~~~~~~~~~~~~~~~~+ h^2(\t)\t^2 \Big(1+h^2(\t)\t^2  \Big)\Big( 1+ h(\t)\t \Big)
 \Big] \E\big[\xi^{p,\alpha}_{r_{k+1}}\big|\mathcal{F}_{t_k}\big]
%%%%%%%%%%%%%%%%%%%%%%%%%%%%%
 \leq  C\t^{1+2\theta}.
\end{align}
Similarly, we  can also prove that for any integer $l>3$, $ \E\big[|\varsigma_k|^l\xi^{p,\alpha}_{r_{k+1}}|\F_{t_k}\big]\leq C \t^{1+2\theta}.$  For any integer $k\geq 0$,
 substituting (\ref{FY_4})-(\ref{FY_6}) into (\ref{FY_3}),  we derive from \eqref{yhfF_101}, (\ref{eq02}) and \eqref{YH_2} that
\begin{align*}
 &\E\Big[ (1+\tilde{Y}^T_{k+1}Q\tilde{Y}_{k+1})^{\frac{p}{2}}\xi^{p,\alpha}_{r_{k+1}}|\F_{t_k}\Big]\nn\\
   \leq & \left(1+Y^T_kQY_k\right)^{\frac{p}{2}} \Big\{\xi^{p,\alpha}_{r_k}+\sum_{j\in \mathbb{S}}\xi^{p,\alpha}_{j}\gamma_{r_k j}\t  +o\left(\t\right)\nn\\
   &~~~~~~~~~~~~~~~~~~~~~~~~~+ \frac{p\t}{2}\frac{(1+Y^T_kQY_k)\psi(Y_k,r_k)+(p-2)| Y_k^TQg(Y_k, r_k)|^2}{(1+Y^T_kQY_k)^2}\xi^{p,\alpha}_{r_k}\Big\}\nn\\
   \leq & \left(1+Y^T_kQY_k\right)^{\frac{p}{2}} \Big[\xi^{p,\alpha}_{r_k} -\frac{\eta_{p, \alpha}}{2}\xi^{p,\alpha}_{r_k}\t  +o\left(\t\right)\Big]+C\t.
\end{align*}
  Choose    $ {\t} ^*\in(0,1] $ sufficiently small
such that
${\t} ^*< 4/\eta_{p,\alpha}$,   $o\left(  {\t} ^*\right)\leq \eta_{p,\alpha}\hat{\xi}^{p,\alpha} {\t} ^*/4.
$ Taking expectations on both sides, for any $\t\in(0, {\t} ^*]$, yields
\begin{align*}%\la{FY_06}
\E\left[\left(1+Y^T_{k+1}QY_{k+1}\right)^{\frac{p}{2}} \xi^{p,\alpha}_{r_{k+1}}\right] &\leq\E\left[(1+\tilde{Y}^T_{k+1}Q\tilde{Y}_{k+1})^{\frac{p}{2}}\xi^{p,\alpha}_{r_{k+1}}\right]\\
 & \leq   \Big(1-  \frac{\eta_{p,\alpha}}{4}  \t\Big)\E\Big[\left(1+Y^T_kQY_k\right)^{\frac{p}{2}} \xi^{p,\alpha}_{r_k} \Big]+C\t.
 \end{align*}
for any integer $k\geq 0$. Repeating this procedure arrives at
\begin{align*} %\la{FY_7}
\E\left[\left(1+Y^T_{k}QY_{k}\right)^{\frac{p}{2}} \xi^{p,\alpha}_{r_{k}}\right]
% \leq&  \Big(1-\big(\frac{\eta_{p,\alpha}}{2}-\epsilon\big) \t\Big)^{k}(x^T_{0}Qx_{0})^{\frac{p}{2}}\xi^{p,\alpha}_{\ell}+C\t\sum_{j=0}^{k-1}\Big(1-\big(\frac{\eta_{p,\alpha}}{2}-\epsilon\big) \t\Big)^{j}\nn\\
 \leq& \mathrm{e}^{- \frac{\eta_{p,\alpha}}{4} k\t}(x^T_{0}Qx_{0})^{\frac{p}{2}}\xi^{p,\alpha}_{\ell}+\frac{4C}{\eta_{p,\alpha}}\left[1-\left(1-  \frac{\eta_{p,\alpha}}{4}  \t\right)^{k}\right]\leq C.
\end{align*}
Therefore,
$
\sup\limits_{k\geq0}\mathbb E|Y_{k}|^{p}%\leq \sup\limits_{k\geq0}\mathbb E\Big[\left( \lambda_{\min}(Q) \right)^{-\frac{p}{2}}(\hat{\xi}^{p,\alpha})^{-1}\left(1+Y^T_{k}QY_{k}\right)^{\frac{p}{2}}\xi^{p,\alpha}_{r_{k}}\Big]
\leq C.
$
The desired assertion follows. \end{proof}

\subsection{Invariant measure}
In this subsection, we discuss the criterion on the existence and uniqueness of the invariant measure  for  SDS \eqref{e1}. It follows from   SDS (\ref{e1}) that
\begin{align}\label{y++}
\mathrm{d}\big(X^{x_0, \ell}_t-X^{\bar{x}_0, \ell}_t\big)&=\big(f(X^{x_0, \ell}_t, r_t^{\ell})-f(X^{\bar{x}_0, \ell}_t, r_t^{\ell})\big) \mathrm{d}t
 +\big(g(X^{x_0, \ell}_t, r_t^{\ell})-g(X^{\bar{x}_0, \ell}_t, r_t^{\ell})\big)\mathrm{d}B(t)\nn\\
&=:F(X^{x_0, \ell}_t, X^{\bar{x}_0, \ell}_t, r_t^{\ell}) \mathrm{d}t+G(X^{x_0, \ell}_t, X^{\bar{x}_0, \ell}_t, r_t^{\ell})\mathrm{d}B(t)
\end{align}
for any two initial values  $(x_0, \ell)$, $(\bar{x}_0,  \ell)\in \mathbb{K}\times\mathbb{S}$, where $\mathbb{K}$ is a compact set in $\mathbb{R}^n$.
For convenience we impose the following hypothesis.
\begin{assp}\label{a5}
 There exists a positive constant  $\rho>0$   such that
\begin{equation*}%\la{b-e3}
|x-y|^{2}\psi(x,y,i) -(2-  \rho)|(x-y)^{T}G(x,y,i)|^{2}\leq \beta_{i}|x-y|^{4},~~~~\forall x,~y\in\RR^n,~ i\in \mathbb{S},
\end{equation*}
where
$  \psi(x,y,i) = 2(x-y)^{T}F(x,y,i)+|G(x,y,i)|^{2},$ and each $\beta_i $ is a constant.
\end{assp}

In order for the unique invariant measure we provide the asymptotic  attractivity  of    SDS (\ref{e1}).
\begin{lemma}\la{Tyle2.3}
Suppose that  Assumption \ref{a1} with $Q_i\equiv Q$ and  Assumption  \ref{a5} hold, $\pi \alpha<0$ and $\pi  \beta<0$. Then
  for any   $p\in (0,p^*_\alpha\wedge p^*_{\beta}\wedge\bar{p})\cap (0, \rho ]$, any compact $\mathbb{K} \subset \RR^n$,   solutions $X^{x_0, \ell}_t$ and $X^{\bar{x}_0, \bar{\ell}}_t$ of    SDS (\ref{e1}) with  initial data $(x_0, \ell)$, $(\bar{x}_0, \bar{\ell})\in \mathbb{K}\times\mathbb{S}$ respectively satisfy
\begin{align}\la{Typ0}
 \lim_{t\rightarrow \infty}\mathbb{E}|X^{x_0, \ell}_t-X^{\bar{x}_0, \bar{\ell}}_t|^p  =0,
\end{align} where each $\beta_i$ is given by  Assumption  \ref{a5} and  $\beta:=(\beta_1,\dots, \beta_m)^T$.
\end{lemma}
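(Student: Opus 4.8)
The plan is to prove the estimate first in the synchronous case $\ell=\bar\ell$, where both copies are driven by the same chain $r^\ell_t$, and then to remove the restriction on the initial mode by a coupling of the two Markov chains. In the synchronous case set $Z_t:=X^{x_0,\ell}_t-X^{\bar x_0,\ell}_t$, so that by \eqref{y++} the pair $(Z_t,r^\ell_t)$ is a switching diffusion with coefficients $F(\cdot,\cdot,i)$ and $G(\cdot,\cdot,i)$. Since $\pi\beta<0$ and $p<p^*_\beta$, Lemma \ref{L:1} applied with $u=\beta$ furnishes $\eta_{p,\beta}>0$ and a vector $\xi^{p,\beta}\gg\mathbf{0}$ with $\sum_{j}\gamma_{ij}\xi^{p,\beta}_j+\tfrac{p\beta_i}{2}\xi^{p,\beta}_i=-\eta_{p,\beta}\xi^{p,\beta}_i$. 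I would then take the state-dependent Lyapunov function $V(z,i):=|z|^p\xi^{p,\beta}_i$, regularised as $(|z|^2+\epsilon)^{p/2}\xi^{p,\beta}_i$ near the origin when $p<2$ and letting $\epsilon\downarrow 0$ at the end.

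A direct computation gives $\mathcal{L}V(z,i)=\tfrac{p}{2}\xi^{p,\beta}_i|z|^{p-4}\big[|z|^2\psi(x,y,i)+(p-2)|z^TG(x,y,i)|^2\big]+|z|^p\sum_{j}\gamma_{ij}\xi^{p,\beta}_j$ with $z=x-y$, where $\psi(x,y,i)=2z^TF(x,y,i)+|G(x,y,i)|^2$ is precisely the quantity appearing in Assumption \ref{a5}. Because $p\le\rho$ forces $p-2\le\rho-2$ and $|z^TG|^2\ge 0$, Assumption \ref{a5} yields $|z|^2\psi+(p-2)|z^TG|^2\le|z|^2\psi+(\rho-2)|z^TG|^2\le\beta_i|z|^4$; combining this with the eigen-relation above collapses the bracketed term and gives $\mathcal{L}V(z,i)\le-\eta_{p,\beta}V(z,i)$. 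The generalised It\^o formula (\cite[Lemma 1.9, p.49]{Mao06}) then yields $\tfrac{d}{dt}\E V(Z_t,r^\ell_t)\le-\eta_{p,\beta}\E V(Z_t,r^\ell_t)$, whence, using $\hat\xi^{p,\beta}|z|^p\le V(z,i)\le\check\xi^{p,\beta}|z|^p$, one gets $\E|Z_t|^p\le(\hat\xi^{p,\beta})^{-1}\check\xi^{p,\beta}|x_0-\bar x_0|^p\,e^{-\eta_{p,\beta}t}$. This is the decay $(\star)$ and finishes the case $\ell=\bar\ell$.

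For $\ell\ne\bar\ell$ I would realise both solutions on one probability space with a common Brownian motion and couple the chains $r^\ell,r^{\bar\ell}$ so that they run (independently, say) until their first meeting time $\vartheta$ and coincide for all later times; the irreducibility of $\Gamma$ together with the finiteness of $\SS$ gives an exponential tail $\PP(\vartheta>t)\le C e^{-\lambda t}$. Splitting $\E|X^{x_0,\ell}_t-X^{\bar x_0,\bar\ell}_t|^p$ over $\{\vartheta>t/2\}$ and $\{\vartheta\le t/2\}$, on the first event I would apply H\"older with an exponent $p'\in(p,\,p^*_\alpha\wedge\bar p)$ and the uniform moment bound of Theorem \ref{Tyle2.2+} to bound the contribution by $(\E|X^{x_0,\ell}_t-X^{\bar x_0,\bar\ell}_t|^{p'})^{p/p'}\,\PP(\vartheta>t/2)^{1-p/p'}\to 0$. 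On the second event the two modes agree from $\vartheta$ onward, so conditioning on $\mathcal{F}_\vartheta$ and using the strong Markov property reduces the post-$\vartheta$ evolution to the synchronous case; estimate $(\star)$ then supplies a factor $e^{-\eta_{p,\beta}(t-\vartheta)}\le e^{-\eta_{p,\beta}t/2}$ multiplying $|X^{x_0,\ell}_\vartheta-X^{\bar x_0,\bar\ell}_\vartheta|^p$.

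The main obstacle is this last piece: transferring $(\star)$ across the random meeting time and controlling $\E\big[|X^{x_0,\ell}_\vartheta-X^{\bar x_0,\bar\ell}_\vartheta|^p\,I_{\{\vartheta\le t/2\}}\big]$, where exponential decay in $t$ must be weighed against the at-most-polynomial growth in $t$ of the truncated moments. I would handle it by decomposing $\{\vartheta\le t/2\}$ over unit time intervals $[n,n+1)$, applying H\"older on each against the exponential tail $\PP(n\le\vartheta<n+1)\le Ce^{-\lambda n}$, and using the finite-horizon bound $\sup_n\E\sup_{n\le s<n+1}|X_s|^{p'}\le C$ (a standard Burkholder--Davis--Gundy estimate seeded by the uniform-in-time moment bound of Theorem \ref{Tyle2.2+}); summing the resulting geometric series shows the piece vanishes as $t\to\infty$ in every case. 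By contrast the synchronous computation is routine once the Lyapunov function $|z|^p\xi^{p,\beta}_i$ has been identified, the only minor technical point being the non-smoothness of $|z|^p$ at the origin, dealt with by the $\epsilon$-regularisation.
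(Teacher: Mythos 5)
Your proposal is correct and follows essentially the same route as the paper: the same Lyapunov function $|z|^p\xi^{p,\beta}_i$ obtained from Lemma \ref{L:1} combined with Assumption \ref{a5} for the synchronous case $\ell=\bar\ell$, and for $\ell\neq\bar\ell$ the same splitting at the meeting time of the two chains, with H\"older plus the uniform moment bound of Theorem \ref{Tyle2.2+} on $\{\vartheta>t/2\}$ and the strong Markov property plus the synchronous exponential decay on $\{\vartheta\le t/2\}$. The only difference is that your final paragraph elaborates a step the paper asserts in one line (it bounds $\mathbb{E}\big(I_{\{\bar\tau\leq t/2\}}|X^{x_0,\ell}_{\bar\tau}-X^{\bar x_0,\bar\ell}_{\bar\tau}|^p\mathrm{e}^{-\eta_{p,\beta}(t-\bar\tau)}\big)\leq C\mathrm{e}^{-\eta_{p,\beta}t/2}$ directly from the fixed-time uniform moment bounds); your unit-interval decomposition is a legitimate way to justify this, though the supremum moment bound there is better obtained from a supermartingale maximal inequality (at a slightly reduced exponent, for which your choice $p<p'<p^*_\alpha\wedge\bar p$ leaves room) than from Burkholder--Davis--Gundy, since Assumption \ref{a1} provides no upper bound on the diffusion coefficient alone.
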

\begin{proof}   For the given $\beta $ and any $p>0$,   $\mathrm{diag}(\beta), ~\Gamma_{p,\beta},~ \eta_{p,\beta},~p^*_\beta$ are defined as \eqref{yhfYH_01} and \eqref{yhfEq:1}. Fix $p\in (0,p^*_\beta)\cap (0,  \rho]$.    Lemma \ref{L:1} implies that $\eta_{p,\beta}>0$ and there exists  $\xi^{p,\beta}\gg \mathbf{0}$ such that
\begin{align}\la{TyhfF_1+}
 \sum_{j= 1}^{m}\gamma_{ij}\xi^{p,\beta}_j+\frac{p\beta_i}{2}\xi^{p,\beta}_i=-\eta_{p,\beta}\xi^{p,\beta}_i<0,~~~~~~i=1,\dots,m
\end{align}
holds. %Clearly, the required assertions  hold  for the solution with the initial data $x_0=\bar{x}_0\in \RR^n,$ $\ell=\bar{\ell}\in \SS$.
%Thus in the following we firstly consider the case that $x_0\neq \bar{x}_0,$   $\ell=\bar{\ell}\in \SS$.
By generalised It\^{o}'s formula (see  e.g., \cite[Theorem 1.45, p.48]{Mao06}), we obtain
\begin{align} \la{yhfF_2}
&\mathrm{e}^{\eta_{p,\beta} t}\big|X^{x_0, \ell}_t-X^{\bar{x}_0,  \ell}_t\big|^{p}\xi^{p,\beta}_{r(t)}\nn\\
\!\!  =& \left|x_0\!-\bar{x}_0\right|^{p}\xi^{p,\beta}_{\ell}  +\int_{0}^{t}\mathrm{e}^{\eta_{p,\beta} s}\bigg\{\eta_{p,\beta}\big|X^{x_0, \ell}_t\!-X^{\bar{x}_0,  \ell}_t\big|^{p}\xi^{p,\beta}_{r(s)}
\! + {\cal{L}}\Big(\big|X^{x_0, \ell}_t\!-X^{\bar{x}_0,  \ell}_t\big|^{p}\xi^{p,\beta}_{r(s)}\Big)\bigg\}\mathrm{d}s\!+M(t),\!\!
\end{align}
where
\begin{align*}%\la{yhfYH_0_2}
 {\cal{L}}\left(|x-y|^{p} \xi^{p,\beta}_i \right)
=  |x-y|^{p}\left[ \frac{p\xi^{p,\beta}_i}{2 }
\frac{|x-y|^2\psi(x,y,i)-(2-p)|(x-y)^T G(x,y, i)|^2}{|x-y|^{4}}
 +\sum_{j=1}^{m} \gamma_{ij}\xi^{p,\beta}_j \right],
\end{align*}
and
\begin{align*}
M(t)=& p \int_{0}^{t}\mathrm{e}^{\eta_{p,\beta} s} \xi^{p,\beta}_{r(s)}\big|X^{x_0, \ell}_t-X^{\bar{x}_0,  \ell}_t\big|^{p-2} \big(X^{x_0, \ell}_t-X^{\bar{x}_0,  \ell}_t\big)^TG(X^{x_0, \ell}_t, X^{\bar{x}_0,  \ell}_t,r(s))\mathrm{d}B(s)\nn\\
&+\int_{0}^{t}\int_{\mathbb{R}}\mathrm{e}^{\eta_{p,\beta} s}\big|X^{x_0, \ell}_t-X^{\bar{x}_0,  \ell}_t\big|^{p}\big(\xi^{p,\beta}_{\ell+\hbar(r(s),l)}-\xi^{p,\beta}_{r(s)}\big)\mu(\mathrm{d}s, \mathrm{d}l)
\end{align*}
where $\mu(\mathrm{d}s, \mathrm{d}l)=\nu(\mathrm{d}s, \mathrm{d}l)-\mu(\mathrm{d}l)\mathrm{d}s$ is a martingale measure,
the definition of $\hbar$ can be found in \cite[p.48]{Mao06}, and $M(t)$
is a real-valued local continuous martingale (see \cite{Mao06}) with $M(0)=0$. %Taking expectations for both sides,
% (\ref{TyhfF_1+}) and  Assumption   \ref{a5} implies
%\begin{align} \la{yhfF_2}
%  \mathrm{e}^{\eta_{p,\beta} t}\big|X^{x_0, \ell}_t-X^{\bar{x}_0,  \ell}_t\big|^{p}\xi^{p,\beta}_{r(t)}
%\leq&\left(x_0^TQ x_0\right)^{\frac{p}{2}}\xi^{p,\beta}_{\ell} +  \int_{0}^{t}\frac{p\mathrm{e}^{\eta_{p,\beta} s}}{2}\left(X^T(s)Q X(s)\right)^{\frac{p}{2}}\xi^{p,\beta}_{r(s)}
%\bigg\{-\beta_{r(s)}  \nn \\
%&~~~~~~~~~+ \frac{\left(X^T(s)Q X(s)\right) \psi(X(s),r(s))-(2-\rho)|X^T(s)Q g(X(s), r(s))|^2}{\left(X^T(s)Q X(s)\right)^{2}} \bigg\}\mathrm{d}s +M(t) \nn \\
% \leq   \left|x_0-\bar{x}_0\right|^{p}\xi^{p,\beta}_{\ell} +M(t).
%\end{align}
Thus, taking expectations on both sides, using (\ref{TyhfF_1+}) and  Assumption   \ref{a5}  implies
\begin{align}\la{Y-HF1}
 \mathbb{E} |X_t^{x_0, \ell}-X_t^{\bar{x}_0, \ell}|^{p}
 \leq  C\exp{\big(-\eta_{p,\beta} t\big)}
\end{align}
for   any two initial values  $(x_0, \ell)$, $(\bar{x}_0,  \ell)\in \mathbb{K}\times\mathbb{S}$.
 Define a stopping time
 $
 \bar{\tau}=\inf\{t\geq0:r^{\ell}_t=r^{\bar{\ell}}_t\}.
 $
Due to the irreduciblility of $r(\cdot)$, there exists  a constant $\bar{\lambda}>0$ such that
\begin{equation}\label{equ8}
 \mathbb P(\bar{\tau}>t)\leq \mathrm{e}^{-\bar{\lambda} t},~\forall t>0.
 \end{equation}
 For any $p\in (0,p^*_\alpha\wedge p^*_{\beta}\wedge\bar{p})\cap (0, \rho ]$, choose $q>1$ such that $pq\in(0,p^*_\alpha )\cap (0,  \bar{p}]$. Using H\"{o}lder's inequality,  \eqref{Y-HF1} and \eqref{equ8}  yields
\begin{align}\label{eqn4}
%\begin{split}
&\mathbb E|X^{x_0, \ell}_t-X^{\bar{x}_0, \bar{\ell}}_t|^{p}\nn\\
=&\mathbb E\big(|X^{x_0, \ell}_t-X^{\bar{x}_0, \bar{\ell}}_t|^{p}I_{\{\bar{\tau}>t/2\}}\big)+\mathbb E\big(|X^{x_0, \ell}_t-X^{\bar{x}_0, \bar{\ell}}_t|^{p}I_{\{\bar{\tau}\leq t/2\}}\big)\nn \\
\leq&\big(\mathbb E|X^{x_0, \ell}_t-X^{\bar{x}_0, \bar{\ell}}_t|^{pq}\big)^{\frac{1}{q}}\big[\mathbb P\big(\bar{\tau}>t/2\big)\big]^{1-\frac{1}{q}}+\mathbb E\Big[I_{\{\bar{\tau}\leq t/2\}}\mathbb E\big(|X^{x_0, \ell}_t-X^{\bar{x}_0, \bar{\ell}}_t|^{p}|\mathcal F_{\bar{\tau}}\big)\Big]\nn \\
=&\big(\mathbb E|X^{x_0, \ell}_t-X^{\bar{x}_0, \bar{\ell}}_t|^{pq}\big)^{\frac{1}{q}}\big[\mathbb P\big(\bar{\tau}>t/2\big)\big]^{1-\frac{1}{q}}
 +\mathbb E\Big[I_{\{\bar{\tau}\leq t/2\}}\mathbb E\Big(\big|X^{{X^{x_0, \ell}_{\bar{\tau}},r^{\ell}_{\bar{\tau}}}}_{t-\bar{\tau}}
-X^{{X^{\bar{x}_0, \bar{\ell}}_{\bar{\tau}},r^{\bar{\ell}}_{\bar{\tau}}}}_{t-\bar{\tau}}\big|^{p}\Big)\Big]\nn \\
\leq&\mathrm{e}^{-\frac{q-1}{2q}\bar{\lambda} t}\Big(\mathbb E\big|X^{x_0, \ell}_t-X^{\bar{x}_0, \bar{\ell}}_t\big|^{pq}\Big)^{\frac{1}{q}}
+C\mathbb E\Big(I_{\{\bar{\tau}\leq t/2\}}\big|X_{\bar{\tau}}^{x_0, \ell}
-X^{\bar{x}_0, \bar{\ell}}_{\bar{\tau}}\big|^{p}\mathrm{e}^{-\eta_{p, \beta}(t-\bar{\tau})}\Big).
%\end{split}
\end{align}
For any $pq\in(0,p^*_\alpha )\cap (0,  \bar{p}]$,  it follows from  Theorem \ref{Tyle2.2+} that
$
\sup\limits_{t\geq0}\mathbb E\big|X_t^{x_0, \ell}\big|^{pq}\leq C,~
\sup\limits_{t\geq0}\mathbb E\big|X_t^{\bar{x}_0, \bar{\ell}}\big|^{pq}\leq C,
$
 and
$$
 C\mathbb{E}\Big(I_{\{\bar{\tau}\leq t/2\}}\big|X_{\bar{\tau}}^{x_0, \ell}
-X^{\bar{x}_0, \bar{\ell}}_{\bar{\tau}}\big|^{p}\mathrm{e}^{-\eta_{p, \beta}(t-\bar{\tau})}\Big)
%\leq&  C \mathrm{e}^{-\frac{\eta_{p, \beta}}{2}t}  \Big(\sup_{t\geq 0}\mathbb{E}\big|X_{t}^{x_0, \ell}\big|^p
%+\sup_{t\geq 0}\mathbb{E}\big|X^{\bar{x}_0, \bar{\ell}}_{t}\big|^{p}\Big)\nn\\
\leq   C\mathrm{e}^{-\frac{\eta_{p, \beta}}{2}t}.
$$
Thus the desired assertion (\ref{Typ0}) follows from (\ref{eqn4}). The proof is complete.
\end{proof}

As well as we know  the stability of the trivial solution is one of the major concerns in many applications. As a corollary, the result on the stability follows from the proof of Lemma \ref{Tyle2.3} directly. For clarity we impose the following assumption.
  \begin{assp}\label{a2}
If
\be\la{eq0solu}
f(\mathbf{0},i)= \mathbf{0},~~~~~g(\mathbf{0},i)= \mathbf{0},  ~~  \forall i\in \mathbb{S}
\ee
holds,  and  for  some $\rho>0$,
 there exists a symmetric positive-definite matrix $ Q\in \RR^{n\times n}$  such that
\begin{align}\label{yhfF1}
 (x^TQx) \psi(x,i)- (2-\rho)|x^TQg(x, i)|^2 \leq   \beta_i (x^TQx)^2,~~~~\forall x\in \RR^n ,~i\in \mathbb{S}
\end{align} holds,  where $
\psi(x,i):=2x^T Q f(x, i)+\trace{\left(g^T(x, i)Q g(x, i)\right)},
$ and   $\beta_i $ is a constant.
  \end{assp}
Due to the above assumption, one observes that
$X(t)\equiv \mathbf{0}$ is  the {\it trivial solution} of SDS (\ref{e1}) with initial values $ X(0)=\mathbf{0}$, any  $r(0)\in \SS $.

 \begin{cor}\la{CT:F_0}
Suppose that  Assumption   \ref{a2} holds and $\pi\beta<0$. Then  for any $p\in (0, p^*_\beta) \cap(0, \rho]$,
 the solution $X(t)$ of SDS (\ref{e1})
 has the  property that
  \be\la{yhfF_0}
   \limsup_{t\rightarrow \infty}\frac{\log(\E|X(t)|^{p  })}{t} \leq  -\eta_{p, \beta}<0,\ee
and
  \be\la{yhf0F_0}
   \limsup_{t\rightarrow \infty}\frac{\log(|X(t)|)}{t} \leq  -\frac{\eta_{p, \beta}}{p} ~~~~a.s.\ee
  where each $\beta_i$ is given by Assumption   \ref{a2}, $\beta=:(\beta_1,\dots,\beta_m)^T$ and $\eta_{p, \beta}$ is defined by \eqref{yhfYH_01}.
   \end{cor}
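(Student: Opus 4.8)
The plan is to mirror the proof of Lemma \ref{Tyle2.3}, replacing the difference of two solutions by the solution $X(t)$ measured against the trivial solution $\mathbf{0}$. Since Assumption \ref{a2} provides $f(\mathbf{0},i)=g(\mathbf{0},i)=\mathbf{0}$ together with the homogeneous Lyapunov inequality \eqref{yhfF1}, I would first invoke Lemma \ref{L:1} with the vector $\beta=(\beta_1,\dots,\beta_m)^T$: because $\pi\beta<0$ and $p\in(0,p^*_\beta)$, it delivers $\eta_{p,\beta}>0$ and a vector $\xi^{p,\beta}=(\xi^{p,\beta}_1,\dots,\xi^{p,\beta}_m)^T\gg\mathbf{0}$ satisfying $\sum_{j=1}^m\gamma_{ij}\xi^{p,\beta}_j+\frac{p\beta_i}{2}\xi^{p,\beta}_i=-\eta_{p,\beta}\xi^{p,\beta}_i$ for each $i\in\SS$. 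This plays the role that \eqref{TyhfF_1+} played in Lemma \ref{Tyle2.3}.

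Next I would take the state-dependent Lyapunov function $V(x,i)=(x^TQx)^{\frac p2}\xi^{p,\beta}_i$ and compute ${\cal L}V$ from the definition \eqref{1e1.2}, exactly as in the displays following \eqref{yhfF_2}; a direct calculation gives
\begin{align*}
{\cal L}V(x,i)=\frac{p}{2}(x^TQx)^{\frac p2-2}\xi^{p,\beta}_i\big[(x^TQx)\psi(x,i)-(2-p)|x^TQg(x,i)|^2\big]+(x^TQx)^{\frac p2}\sum_{j=1}^m\gamma_{ij}\xi^{p,\beta}_j.
\end{align*}
Since $p\le\rho$ forces $2-p\ge2-\rho$, Assumption \ref{a2} bounds the bracket by $\beta_i(x^TQx)^2$, and combined with the eigen-relation for $\xi^{p,\beta}$ this collapses to the key estimate ${\cal L}V(x,i)\le-\eta_{p,\beta}V(x,i)$. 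Note that, unlike Theorem \ref{Tyle2.2+}, the homogeneity of \eqref{yhfF1} leaves no additive constant, so the full rate $\eta_{p,\beta}$ (not its half) is available. Applying the generalised It\^{o} formula to $\mathrm{e}^{\eta_{p,\beta}t}V(X(t),r(t))$ produces a drift $\mathrm{e}^{\eta_{p,\beta}s}(\eta_{p,\beta}V+{\cal L}V)\le0$ plus a local martingale, so after a stopping-time localisation and taking expectations I obtain $\E\big[(X^T(t)QX(t))^{\frac p2}\xi^{p,\beta}_{r(t)}\big]\le(x_0^TQx_0)^{\frac p2}\xi^{p,\beta}_\ell\,\mathrm{e}^{-\eta_{p,\beta}t}$. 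Bounding below by $\hat{\xi}^{p,\beta}(\lambda_{\min}(Q))^{\frac p2}\E|X(t)|^p$ yields $\E|X(t)|^p\le C\mathrm{e}^{-\eta_{p,\beta}t}$, which is precisely \eqref{yhfF_0}.

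For the almost sure statement \eqref{yhf0F_0} I would exploit that the same drift bound shows $\mathrm{e}^{\eta_{p,\beta}t}V(X(t),r(t))$ is a nonnegative local supermartingale, hence a genuine supermartingale by Fatou, and therefore converges almost surely to a finite limit by the nonnegative supermartingale convergence theorem. Consequently $(X^T(t)QX(t))^{\frac p2}\xi^{p,\beta}_{r(t)}\le C(\omega)\mathrm{e}^{-\eta_{p,\beta}t}$ for all large $t$; taking logarithms, dividing by $t$ and letting $t\to\infty$ then gives $\limsup_{t\to\infty}\frac1t\log|X(t)|\le-\eta_{p,\beta}/p$ almost surely.

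The main obstacle is the non-smoothness of $(x^TQx)^{\frac p2}$ at the origin when $p<2$, which blocks a naive application of It\^{o}'s formula. I would dispose of it by observing that for $x_0=\mathbf{0}$ the assertions are trivial since $X(t)\equiv\mathbf{0}$, while for $x_0\ne\mathbf{0}$ the equilibrium structure $f(\mathbf{0},i)=g(\mathbf{0},i)=\mathbf{0}$ together with pathwise uniqueness guarantees $X(t)\ne\mathbf{0}$ for all $t$ almost surely; working on the stopping times $\sigma_\varepsilon=\inf\{t:|X(t)|\le\varepsilon\}$, on which $V$ is $C^2$, and letting $\varepsilon\downarrow0$ (so that $\sigma_\varepsilon\uparrow\infty$ a.s.) legitimises every step above. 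This is exactly the device already implicit in the proof of Lemma \ref{Tyle2.3}, so no genuinely new difficulty arises.
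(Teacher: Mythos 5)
Your proposal is correct and follows essentially the same route as the paper: the paper proves this corollary by specializing the proof of Lemma \ref{Tyle2.3} to the pair $(X_t^{x_0,\ell},\mathbf{0})$ (legitimate since Assumption \ref{a2} makes $\mathbf{0}$ a solution), which amounts to exactly your Lyapunov function $(x^TQx)^{\frac p2}\xi^{p,\beta}_i$ built from Lemma \ref{L:1}, the same It\^{o} estimate yielding $\E|X(t)|^p\le C\mathrm{e}^{-\eta_{p,\beta}t}$ for \eqref{yhfF_0}, and the nonnegative semimartingale convergence theorem for \eqref{yhf0F_0} (your nonnegative-supermartingale convergence argument is the same tool in slightly different packaging). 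Your closing remark on the non-smoothness of $(x^TQx)^{\frac p2}$ at the origin when $p<2$ addresses a point the paper passes over silently and is a welcome refinement; the only caveat is that the claim that $X(t)$ never reaches $\mathbf{0}$ from $x_0\neq\mathbf{0}$ is a standard lemma established by its own Lyapunov/exit-time argument (see, e.g., Mao's book), not a direct consequence of pathwise uniqueness as you assert.
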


   \begin{proof}  In the same way as  the proof of  Lemma \ref{Tyle2.3}, for any initial data $(x_0,\ell)\in \RR^n\times \SS$, the solution $ X_t^{ {x}_0, \ell}$ satisfies  inequality  \eqref{Y-HF1} with $ X_t^{ \bar{x}_0, \ell}  \equiv\mathbf{0}$. Then (\ref{yhfF_0}) follows directly.  On the other hand,
 due to Assumption \ref{a2} and   (\ref{yhfF_2}) with $ X_t^{ \bar{x}_0, \ell}  \equiv\mathbf{0}$, by  the nonnegative semimartingale convergence theorem (see, e.g., \cite[p.18, Theorem 1.10]{Mao06}), we obtain
$
\limsup_{t\rightarrow \infty} \hat{\xi}^{p,\beta} \mathrm{e}^{\eta_{p,\beta} t} \big|X^{x_0, \ell}_t\big|^{p} <\infty~a.s.
$
Therefore the other required assertion follows. The proof is complete.
\end{proof}

%It was pointed out in  \cite[p.164]{Mao06} that the result of Corollary \ref{CT:F_0} forces $f(\mathbf{0},i)= \mathbf{0}$ and $g(\mathbf{0},i)= \mathbf{0}
%$  for any $i\in \mathbb{S}$ in the SDS \eqref{e1}.
Next we give the existence and uniqueness of the invariant measure for the solution  $(X^{x_0, \ell}_t,r^{\ell}_t)$  of SDS (\ref{e1}).
 Let $\mathbf{P}_{t}(x_0, \ell;\mathrm{d}x\times \{i\})$ be the transition probability kernel of the pair $\big(X_t^{x_0, \ell},r^{\ell}_t\big)$, a time homogeneous Markov process (see, e.g, \cite[Theorem 3.27, pp.104-105]{Mao06}). Recall that $\mu\in\mathcal P(\RR^n\times S)$ is called an invariant measure of $\big(X_t^{x_0, \ell},r_t^{\ell}\big)$ if
\begin{align*}
\mu(\Upsilon\times\{\ell\})=\sum_{i=1}^{m}\int_{\RR^n} \mathbf{P}_{t}(x, i; \Upsilon\times\{\ell\})\mu(\mathrm{d}x\times\{i\}),~~~\forall t\geq0,~\Upsilon\in\mathscr{B}(\RR^n),~\ell\in \SS
\end{align*}
holds.
\begin{theorem}\la{yth3.1}
Under the conditions of Lemma  \ref{Tyle2.3},
 the solution  $(X^{x_0, \ell}_t,r^{\ell}_t)$   admits a unique invariant measure $\mu\in \mathcal{P}(\RR^n\times \SS)$.
\end{theorem}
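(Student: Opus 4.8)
The plan is to realise the invariant measure as the $W_p$-limit, as $t\to\infty$, of the one-dimensional marginals $\mathbf{P}_t(x_0,\ell;\cdot)$ of the process, exploiting the contractivity supplied by Lemma \ref{Tyle2.3} together with the uniform moment bound of Theorem \ref{Tyle2.2+}. For a measure $\nu$ write $(\nu\mathbf{P}_t)(\cdot)=\sum_{i}\int \mathbf{P}_t(x,i;\cdot)\,\nu(\mathrm dx,\{i\})$, so that $\mathbf{P}_t(x_0,\ell;\cdot)=\delta_{(x_0,\ell)}\mathbf{P}_t$ and the Chapman–Kolmogorov identity reads $\mathbf{P}_{t+s}(x_0,\ell;\cdot)=(\delta_{(x_0,\ell)}\mathbf{P}_s)\mathbf{P}_t$. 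I fix $p$ strictly inside $(0,p^*_\alpha\wedge p^*_\beta\wedge\bar p)\cap(0,\rho]$, chosen small enough that some $p'>p$ still lies in $(0,p^*_\alpha)\cap(0,\bar p]$; then Theorem \ref{Tyle2.2+} gives $\sup_{s\ge0}\mathbb{E}|X^{x_0,\ell}_s|^{p'}\le C$, so the family $\{|X^{x_0,\ell}_s|^{p}\}_{s\ge0}$ is uniformly integrable, a fact that will be decisive.

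First I would record a coupling estimate: driving two initial data by the same Brownian motion and coupling the two chains after their first meeting time $\bar\tau$ yields
\[
W_p(\delta_{(y,j)}\mathbf{P}_t,\delta_{(z,k)}\mathbf{P}_t)\le \mathbb{E}|X^{y,j}_t-X^{z,k}_t|^{p}+\mathbb{P}\{r^{j}_t\neq r^{k}_t\},
\]
where the second term is $\le \mathbb{P}\{\bar\tau>t\}\le \mathrm{e}^{-\bar\lambda t}$ by \eqref{equ8} and the first tends to $0$, uniformly for $(y,j),(z,k)$ in a compact set, by \eqref{Typ0}. I would also keep the crude companion bound $W_p(\delta_{(y,j)}\mathbf{P}_t,\delta_{(z,k)}\mathbf{P}_t)\le 1+\mathbb{E}|X^{y,j}_t|^{p}+\mathbb{E}|X^{z,k}_t|^{p}\le C(1+|y|^{p}+|z|^{p})$ from Theorem \ref{Tyle2.2+}. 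For the Cauchy property in $W_p$, write for $s\ge0$, using convexity of $W_p$ and the identity above,
\[
W_p(\delta_{(x_0,\ell)}\mathbf{P}_{t+s},\delta_{(x_0,\ell)}\mathbf{P}_t)\le \int_{\RR^n\times\SS} W_p(\delta_{(y,j)}\mathbf{P}_t,\delta_{(x_0,\ell)}\mathbf{P}_t)\,\mathbf{P}_s(x_0,\ell;\mathrm dy,\mathrm dj),
\]
and split the integral over $\{|y|\le R\}$ and $\{|y|>R\}$. On the ball the integrand is uniformly small once $t$ is large; on the complement I dominate it by $C(1+|y|^{p})$ and estimate $\int_{|y|>R}(1+|y|^{p})\,\mathbf{P}_s(x_0,\ell;\mathrm dy,\mathrm dj)=\mathbb{E}[(1+|X^{x_0,\ell}_s|^{p})\mathbf{1}_{\{|X^{x_0,\ell}_s|>R\}}]\le CR^{\,p-p'}$, which is small uniformly in $s$ by the uniform integrability above. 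Choosing $R$ large, then $t$ large, shows $\{\delta_{(x_0,\ell)}\mathbf{P}_t\}_{t\ge0}$ is $W_p$-Cauchy; completeness of $(\mathcal{P}(\RR^n\times\SS),W_p)$ produces a limit $\mu$, which has finite $p$-moment by Fatou and Theorem \ref{Tyle2.2+}.

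For invariance I would invoke the Feller property of $\mathbf{P}_s$ (continuous dependence of the solution on the initial datum under local Lipschitz coefficients, the finite switching handled by conditioning): for bounded continuous $\phi$, $\mathbf{P}_s\phi$ is bounded continuous, and $W_p$-convergence implies weak convergence, so
\[
\int \mathbf{P}_s\phi\,\mathrm d\mu=\lim_{t\to\infty}\int \mathbf{P}_s\phi\,\mathrm d(\delta_{(x_0,\ell)}\mathbf{P}_t)=\lim_{t\to\infty}\int \phi\,\mathrm d(\delta_{(x_0,\ell)}\mathbf{P}_{t+s})=\int \phi\,\mathrm d\mu,
\]
whence $\mu\mathbf{P}_s=\mu$ for every $s\ge0$. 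For uniqueness, let $\mu,\bar\mu$ be invariant (both with finite $p$-moment, inherited from the Foster–Lyapunov estimate underlying Theorem \ref{Tyle2.2+}); then for an optimal coupling $\pi$ of $\mu,\bar\mu$, invariance gives $W_p(\mu,\bar\mu)=W_p(\mu\mathbf{P}_t,\bar\mu\mathbf{P}_t)\le \iint W_p(\delta_{(y,j)}\mathbf{P}_t,\delta_{(z,k)}\mathbf{P}_t)\,\pi(\mathrm d(y,j),\mathrm d(z,k))$, where the integrand tends to $0$ pointwise by the coupling estimate and is dominated by the $\pi$-integrable $C(2+|y|^{p}+|z|^{p})$; dominated convergence forces $W_p(\mu,\bar\mu)=0$, i.e. $\mu=\bar\mu$.

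The main obstacle is the Cauchy step: the pointwise attractivity of Lemma \ref{Tyle2.3} does not transfer automatically to the mixture $\mathbf{P}_s(x_0,\ell;\cdot)$, and the genuine work is the uniform-in-$s$ control of the large-$|y|$ tail, which is exactly why a strictly larger moment exponent $p'>p$ is introduced to secure uniform integrability. The exponential meeting-time bound \eqref{equ8} is what allows the discrete component to be absorbed into the same estimate throughout.
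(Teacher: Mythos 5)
Your proposal is correct, but it reaches existence by a genuinely different route than the paper. The paper's proof is a Krylov--Bogoliubov argument: it forms the time-averaged measures $\hbar_t(\cdot)=\frac{1}{t}\int_0^t\mathbf{P}_s(x_0,\ell;\cdot)\,\mathrm{d}s$, gets tightness from the uniform moment bound of Theorem \ref{Tyle2.2+} plus Chebyshev, and invokes the Feller property together with a cited existence theorem to produce an invariant measure; uniqueness then follows from the attractivity estimate of Lemma \ref{Tyle2.3} and the irreducibility bound \eqref{equ8}, exactly as in your last paragraph. You instead prove that the marginals $\delta_{(x_0,\ell)}\mathbf{P}_t$ themselves form a $W_p$-Cauchy family, via Chapman--Kolmogorov, joint convexity of $W_p$, the coupling estimate on compact sets, and the tail control obtained from a strictly larger exponent $p'>p$ (available because $p$ is strictly below $p^*_\alpha\wedge p^*_\beta\wedge\bar p$, the same trick the paper uses inside the proof of Lemma \ref{Tyle2.3}); completeness and the Feller property then give the limit and its invariance. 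Your route buys more: it shows the one-dimensional distributions converge to $\mu$ in $W_p$ (not merely that time averages subconverge), and your dominated-convergence step in the uniqueness argument --- justified by the finite $p$-moment of any invariant measure, extracted from the Foster--Lyapunov bound --- repairs a point the paper leaves implicit, since the paper integrates $W_p\big(\delta_{(x,i)}\mathbf{P}_t,\delta_{(y,j)}\mathbf{P}_t\big)$ over all of $\RR^n\times\SS$ while \eqref{Typ0} only gives pointwise (compact-set) convergence. The price is that your Cauchy step leans on the uniformity over compact initial sets of the convergence in Lemma \ref{Tyle2.3}; this is not stated in the lemma but is what its proof actually delivers (the bound \eqref{eqn4} is exponential with constants depending only on the compact set), so the reliance is legitimate. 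The paper's approach is shorter because the heavy lifting is delegated to the cited Krylov--Bogoliubov theorem.
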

\begin{proof}   For arbitrary $t>0$, define a probability measure
$$
 \hbar_t(\Upsilon)=\frac{1}{t}\int_{0}^{t}\mathbf{P}_{s}(x, i;\Upsilon)\mathrm{d}s,~~~~~~\Upsilon\in \mathscr{B}(\RR^n\times \mathbb{S}).
$$
Then, for any $\varepsilon>0$, by Theorem \ref{Tyle2.2+} and Chebyshev's inequality, there exists a positive integer  $N>0$ sufficiently large such that
$$
 \hbar_t(B_{N}(\mathbf{0})\times \mathbb{S})=\frac{1}{t}\int_{0}^{t}\mathbf{P}_{s}(x_0, \ell;B_{N}(\mathbf{0})\times \mathbb{S})\mathrm{d}s \geq 1-\frac{\sup_{t\geq 0}\mathbb{E}|X_t^{x_0, \ell}|^p}{N^p}\geq 1-\varepsilon
$$
for any $p\in (0,p^*_\alpha)\cap (0, \bar{p}]$. Hence $\{ \hbar_t\}_{t\geq 0}$ is tight since $B_{N}(\mathbf{0})$ is a compact subset of $\RR^n$. Moreover, one observes that $(X_t^{x_0, \ell},r_t^{\ell})$ enjoys the Feller property (see, e.g., \cite[Theorem 2.18, p.48]{yz09}). Thus, the solution $(X_t^{x_0, \ell},r_t^{\ell})$ of  SDS  (\ref{e1})    has an invariant measure (see, e.g., \cite[Theorem 4.14, p.128]{Chen2004}).
  We have established the existence of the  invariant measure, and  next we go a further step to show its uniqueness. For any $p\in (0,p^*_\alpha\wedge p^*_{\beta}\wedge\bar{p})\cap (0, \rho]$,   (\ref{Typ0}) and the irreducibility of $r(\cdot)$ result  in
\begin{align}\label{eqn5}
\!\!\! W_{p}\big(\delta_{(x_0, \ell)}\mathbf{P}_{t},\delta_{(\bar{x}_0, \bar{\ell})}\mathbf{P}_{t}\big)
 \leq  \mathbb E|X^{x_0, \ell}_t-X^{\bar{x}_0, \bar{\ell}}_t|^{p}+\mathbb P\big(r^{\ell}_t\neq r^{\bar{\ell}}_t\big)\rightarrow 0,~~t\rightarrow\infty,
\end{align}
where $\delta_{(x,i)}$ stands for the Dirac's measure at the point $(x,i)$.
 Assume both $\mu$ and $\bar{\mu}$ are invariant measures, then we have
\begin{align*}
W_{p}(\bar{\mu},\mu) &=W_{p}(\bar{\mu} \mathbf{P}_{t},\mu \mathbf{P}_{t})
 =
\sup_{\bar{\psi}:\textrm{Lip}(\bar{\psi})\leq 1}\Big\{\int_{\RR^{n}\times \mathbb{S}}\bar{\psi}(x, i)\mathrm{d}(\bar{\mu} \mathbf{P}_{t})-\int_{\RR^{n}\times \mathbb{S}}\bar{\psi}(y, j)\mathrm{d}(\mu  \mathbf{P}_{t})\Big\}\\
\notag
 &\leq\int_{\RR^{n}\times \mathbb{S}}\int_{\RR^{n}\times \mathbb{S}}\bar{\mu}\big(\mathrm{d}x\times {\{i\}}\big)\mu\big(\mathrm{d}y\times {\{j\}}\big)W_{p}\big(\delta_{(x,i)}\mathbf{P}_{t},\delta_{(y,j)}\mathbf{P}_{t}\big).
\end{align*}
One then observes that, due to  (\ref{eqn5}),
$W_{p}(\bar{\mu},\mu)\rightarrow 0 $ as $t\rightarrow\infty.$
The desired assertion follows. \end{proof}

\subsection{The invariant measure of numerical solution}
\label{S-dist} In order to approximate  the invariant measure $\mu$ of  SDS (\ref{e1}) we need to construct the appropriate  scheme such that   the numerical solutions are attractive in $\r$th moment and admit a unique numerical invariant measure. However, the   proposed  mapping  $\pi_\t^i(x)$  is not suitable for the avoidance of   extra wide divergence  in distance between two different numerical solutions. Thus we construct a new    truncation mapping $\bar{\pi}_\t^i(x)$ according to the local Lipschitz continuity  of the drift and diffusion coefficients. Then making use of the appropriate truncation mapping we give an  explicit scheme. Finally we show that,  it produces  a unique numerical invariant measure $\mu^\t$ which tends to the invariant measure $\mu$ of SDS (\ref{e1}) as $\t \rightarrow0$ in the Wasserstein metric.

For each $i\in \Se$ choose a strictly increasing continuous functions $\bar{\f}_i: \RR_+\rightarrow \RR_+$ such that $\bar{\f}_i(u)\rightarrow \infty$
as $u\rightarrow \infty$ and
\begin{align}\la{6Fe21}
 \sup_{|x|\vee|y|\leq u, x\neq y}  \Big(\frac{ |f(x, i)-f(y,i)|}{|x-y|}\vee \frac{|g(x, i)-g(y, i)|^2}{|x-y|^2}\Big)\leq \bar{\f}_i(u),~~\forall~ u\geq 1.
\end{align}
Due to the local Lipschitz continuity of $f$ and $g$  the function $ \bar{\f}_i$ can be well defined as well as its inverse function  $\bar{\f}_i^{-1}: [\bar{\f}_i(1),\infty)\rightarrow \RR_+ $.
We choose  a  strictly decreasing $\bar{h}:(0, 1]\rightarrow [\check{\bar{\f}}(|x_0|\vee 1), \infty)$ such that
\begin{align}\la{6Fe22}
 \bar{h}(1)\geq   \max_{i\in \mathbb{S}}\{|f(0,i)|\vee |g(0,i)|^2\}, ~~
 \lim_{\t\rightarrow 0} \bar{h}(\t)= \infty,~~\hbox{and} ~~\t^{1/2- \bar{\theta}} \bar{h}(\t)\leq  {K},~~~\forall \t \in (0, 1]
\end{align}
for  some $0<\bar{\theta}< {1}/{2}$ holds, where $ {K}$ is a positive constant  independent of   $k$ and  $\t$.
For a given stepsize $\t\in (0, 1]$, let us define the truncated mapping $\bar{\pi}^{i}_{\t}:\RR^n\ra \RR^n$ by
\begin{align}\la{6Fe23}
\bar{\pi}^i_\t(x)= \Big(|x|\wedge  \bar{\f}_i^{-1}(\bar{h}(\t))\Big) \frac{x}{|x|},
\end{align}
where we let $\frac{x}{|x|}=\mathbf{0}$ when $x=\mathbf{0}\in \mathbb{R}^n$. Obviously, for any $i\in \mathbb{S}$ and $x,~y\in\RR^n$,
\begin{align} \la{Fe24}
 &|f(\bar{\pi}^i_\t(x),i)-f(\bar{\pi}^i_\t(y), i)|
\leq    \bar{h}(\t) |\bar{\pi}^i_\t(x)-\bar{\pi}^i_\t(y)|,\nn\\
&|g(\bar{\pi}^i_\t(x), i)-g(\bar{\pi}^i_\t(y), i)|^2
\leq    \bar{h}(\t) |\bar{\pi}^i_\t(x)-\bar{\pi}^i_\t(y)|^2,
\end{align}
and
\begin{align} \la{Fe25}
|f(\bar{\pi}^i_\t(x),i)| \leq   \bar{h}(\t)\big(1+ |\bar{\pi}^i_\t(x)|\big),~~~
|g(\bar{\pi}^i_\t(x), i)|^2  \leq   \bar{h}(\t)\big(1+ |\bar{\pi}^i_\t(x)|\big)^2.
\end{align}

\begin{rem}
If there exists the  state  $i \in  \mathbb{S}$ such that
\begin{align}\la{trun3}
|f(x, i)-f(y, i)| \leq C|x-y|,~~~~ |g(x, i)-g(y, i)|^2\leq C|x-y|^2,~~~\forall x, y\in \mathbb{R}^n,
\end{align}
   we  choose $\bar{\f}_i(u)\equiv C$ for any   $u\geq 1$, and  let $\bar{\f}^{-1}_i(u)\equiv +\infty$ for any $u\in [C, +\infty)$. Then
$\bar{\pi}_\t^i(x)=x$, (\ref{Fe24}) and (\ref{Fe25}) hold always.
%hold always where $\bar{h}(\t)=C\t^{-\tau}, ~0<\tau<1/2$. Thus the new scheme is also suitable for linear SDSs.
\end{rem}

For any given stepsize $\t\in (0, 1]$, define a new  truncated EM method scheme by
 \begin{align}\la{6FY_0}
\left\{
\begin{array}{ll}
\tilde{Z}_0=x_0,~~r_0=\ell,&\\
Z_{k}=\bar{\pi}^{r_{k}}_\t(\tilde{Z}_{k}),\\
\tilde{Z}_{k+1}=Z_{k}+f(Z_{k}, r_k)\t +g(Z_{k}, r_k)\t B_k,
\end{array}
\right.
\end{align}
for any integer  $k\geq 0$. To obtain the continuous-time approximation,  define
$ Z(t):=Z_k$ for any $t\in[t_k,  t_{k+1})$.  We write $(Z_k^{x_0, \ell},r_k^{\ell})$ in lieu of
$(Z_k,r_k)$ to highlight the initial data $(Z_0,r_0)=(x_0,\ell)$.
 Then we have
 \begin{align}\la{I-01}
\begin{array}{lll}
&|f(Z^{\bar{x}_0,\ell}_{k},r^{\ell}_k)-f(Z^{\bar{x}_0,\ell}_{k}, r^{\ell}_k)|
\leq   \bar{h}(\t) |Z^{x_0,\ell}_{k}-Z^{\bar{x}_0,\ell}_{k}|,\\
&|g(Z^{x_0,\ell}_{k}, r^{\ell}_k)-g(Z^{\bar{x}_0,\ell}_{k}, r^{\ell}_k)|^2
\leq    \bar{h}(\t) |Z^{x_0,\ell}_{k}-Z^{\bar{x}_0,\ell}_{k}|^2,
\end{array}
\end{align}
and linear property
\begin{align}\la{I-02}
\!\!\!|f(Z^{x_0,\ell}_{k},r^{\ell}_k)|  \leq   \bar{h}(\t)\big(1+ |Z^{x_0,\ell}_{k}|\big),~
|g(Z^{x_0,\ell}_{k}, r^{\ell}_k)|^2  \leq   \bar{h}(\t)\big(1+ |Z^{x_0,\ell}_{k}|\big)^2.
\end{align}
%where $(Z^{x_0,\ell}_{k}, r^{\ell}_k)$ denote the numerical solution defined by \eqref{6FY_0} with any given initial value  $(x_0,\ell)$.
 Thus,  the above properties support  that  the conclusion of Theorem \ref{T:C_2} holds for Scheme \eqref{6FY_0}.
In order for the uniqueness of the numerical invariant measure we prepare  the   attractive property of the numerical solutions.
\begin{lemma}\la{yle3.5}
Under the conditions of Lemma \ref{Tyle2.3}, there is a constant ${\t}^{**} \in(0,\t^*]$  such that  for any $p\in (0,p^*_\alpha\wedge p^*_{\beta}\wedge\bar{p})\cap (0,\rho]$, Scheme \eqref{6FY_0} has the   property that
\begin{align}\label{Tty3.8}
\lim\limits_{k \rightarrow \infty}\mathbb E|Z^{x_0,\ell}_{k}-Z^{\bar{x}_0,\bar{\ell}}_{k}|^{p}
=0
\end{align}
for any $\t \in (0, \t^{**}]$,   $(x_0,\ell), (\bar{x}_0,\bar{\ell})\in \mathbb{K} \times\mathbb{S}$, where   $\t^*$ is given in  Theorem \ref{4TT:F_1}.
\end{lemma}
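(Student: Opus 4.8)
The plan is to follow the template of Lemma~\ref{Tyle2.3}, replacing the generalised It\^o formula by the discrete Taylor expansion of Theorem~\ref{4TT:F_1} and splitting the argument according to whether the two driving chains start from the same state. Write $D_k:=Z^{x_0,\ell}_k-Z^{\bar x_0,\bar\ell}_k$ and $\tilde D_{k+1}:=\tilde Z^{x_0,\ell}_{k+1}-\tilde Z^{\bar x_0,\bar\ell}_{k+1}$. The key structural fact is that the truncation $\bar\pi^i_\t$ is a radial projection onto a ball and hence non-expansive, so $|D_{k+1}|\le|\tilde D_{k+1}|$; this plays the role of the truncation inequality in Theorem~\ref{4TT:F_1}.

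\textbf{Step 1 (common chain).} First I treat $\ell=\bar\ell$, so that both copies are driven by the same chain $r_k$ and the same increments $\t B_k$. By Lemma~\ref{L:1} there is $\xi^{p,\beta}\gg\mathbf0$ with $\sum_{j}\gamma_{ij}\xi^{p,\beta}_j+\tfrac{p\beta_i}{2}\xi^{p,\beta}_i=-\eta_{p,\beta}\xi^{p,\beta}_i$. When $D_k\neq\mathbf0$ I write $|\tilde D_{k+1}|^2=|D_k|^2(1+\chi_k)$ with $\chi_k$ the analogue of $\varsigma_k$ from Theorem~\ref{4TT:F_1}, built from $F=f(Z^{x_0,\ell}_k,r_k)-f(Z^{\bar x_0,\ell}_k,r_k)$ and $G=g(Z^{x_0,\ell}_k,r_k)-g(Z^{\bar x_0,\ell}_k,r_k)$, expand $(1+\chi_k)^{p/2}$ by Lemma~\ref{L-Tay} (reducing as usual to $0<p\le2$), and take conditional expectations using Lemma~\ref{Z:1} and the moment identities for $\t B_k$. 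The first- and second-order terms combine into $\tfrac{p\t}{2}\,|D_k|^{-4}\big(|D_k|^2\psi-(2-p)|D_k^TG|^2\big)\xi^{p,\beta}_{r_k}$, which by $p\le\rho$ and Assumption~\ref{a5} is bounded by $\tfrac{p\beta_{r_k}}{2}\t\,\xi^{p,\beta}_{r_k}$; the remaining terms are $O(\t^{1+2\bar\theta})$ by the growth control \eqref{6Fe22} and \eqref{I-01}. Using the eigenvector relation together with $|D_{k+1}|\le|\tilde D_{k+1}|$ yields, for all $\t$ below some $\t^{**}\in(0,\t^*]$,
\[
\E\big[|D_{k+1}|^p\xi^{p,\beta}_{r_{k+1}}\,\big|\,\mathcal F_{t_k}\big]\le\Big(1-\tfrac{\eta_{p,\beta}}{2}\t\Big)|D_k|^p\xi^{p,\beta}_{r_k},
\]
so iterating gives $\E\big[|D_k|^p\xi^{p,\beta}_{r_k}\big]\le(1-\tfrac{\eta_{p,\beta}}{2}\t)^k|x_0-\bar x_0|^p\xi^{p,\beta}_\ell$, whence $\E|D_k|^p\to0$ geometrically.

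\textbf{Step 2 (different chains).} For $\ell\neq\bar\ell$ I couple the two chains so that they coincide after the meeting time $\bar\tau:=\inf\{k\ge0:r^{\ell}_k=r^{\bar\ell}_k\}$; irreducibility of $\Gamma$ (which makes $P(\t)=\exp(\t\Gamma)$ strictly positive, hence aperiodic and irreducible) gives a geometric tail $\PP(\bar\tau>k)\le C\lambda^k$ as in \eqref{equ8}. Splitting $\E|D_k|^p$ over $\{\bar\tau>k/2\}$ and $\{\bar\tau\le k/2\}$ exactly as in \eqref{eqn4}: on the first event I apply H\"older with an exponent $q>1$ for which $pq\in(0,p^*_\alpha)\cap(0,\bar p]$, together with the uniform-in-time bound $\sup_k\E|Z_k|^{pq}\le C$ (the analogue of Theorem~\ref{4TT:F_1} for Scheme~\eqref{6FY_0}, which holds verbatim thanks to \eqref{I-02} and \eqref{6Fe22}) and the tail estimate; on the second event I condition at $\bar\tau$, where the chains agree, and invoke the Step~1 geometric decay over the remaining $k-\bar\tau\ge k/2$ steps started from $(Z^{x_0,\ell}_{\bar\tau},Z^{\bar x_0,\bar\ell}_{\bar\tau})$, whose $p$th moment is bounded. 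Both contributions vanish as $k\to\infty$, giving \eqref{Tty3.8}.

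\textbf{Main obstacle.} The real work is Step~1: carrying the conditional-expectation bookkeeping of Theorem~\ref{4TT:F_1} through for the two-point difference and checking that, because Assumption~\ref{a5} is homogeneous (no additive slack), the recursion is purely multiplicative, so that no $+C\t$ term spoils the contraction. In particular one must verify that all third- and higher-order terms in the expansion are $O(\t^{1+2\bar\theta})|D_k|^p$ uniformly in $D_k$, using $\t^{1/2-\bar\theta}\bar h(\t)\le K$; this is what fixes $\t^{**}$ and forces the strict inequality $0<\bar\theta<1/2$ in \eqref{6Fe22}. Step~2 is comparatively routine once the continuous-time coupling of Lemma~\ref{Tyle2.3} is transcribed to the grid.
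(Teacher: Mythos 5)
Your proposal is correct and follows essentially the same route as the paper: a weighted contraction estimate $\E\big[|D_{k+1}|^p\xi^{p,\beta}_{r_{k+1}}\big]\le(1-c\t)\,\E\big[|D_k|^p\xi^{p,\beta}_{r_k}\big]$ for a common chain (via Lemma \ref{L-Tay}, the conditioning technique of Lemma \ref{Z:1}, Assumption \ref{a5} with $p\le\rho$, the eigenvector relation from Lemma \ref{L:1}, and the non-expansiveness of $\bar{\pi}^i_\t$), followed by the meeting-time coupling with H\"older splitting for different initial states, exactly as in the paper's proof. The only cosmetic differences are that the paper regularizes the ratio with $(\varepsilon+|D_k|^2)$ and lets $\varepsilon\downarrow 0$ by monotone convergence rather than your case split on $D_k\neq\mathbf{0}$, and that at the stopped time the paper bounds $\E\big|Z^{x_0,\ell}_{\tilde{\tau}\wedge(\lfloor k/2\rfloor+1)}\big|^p$ by $C(2+k/2)$ (linear in $k$, not bounded as you state, though still killed by the exponential factor, so your argument goes through unchanged).
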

\begin{proof}
 % It follows from \eqref{6FY_0} that
%$$ \Bigg\{
%\begin{aligned}
%\tilde{Z}^{x_0,\ell}_{k+1}   = & Z^{x_0,\ell}_{k}+f(Z^{x_0,\ell}_{k},r_{k}^{\ell})\triangle+g(Z^{x_0,\ell}_{k},r_{k}^{\ell})\triangle B_{k}, &Z^{x_0,\ell}_{k+1}  = & \bar{\pi}^{r_{k+1}^{\ell}}_{\t}\big(\tilde{Z}^{x_0,\ell}_{k+1}),\\
%\tilde{Z}^{\bar{x}_0,\ell}_{k+1}  = & Z^{\bar{x}_0,\ell}_{k}+f(Z^{\bar{x}_0,\ell}_{k},r_{k}^{\ell})\triangle+g(Z^{\bar{x}_0,\ell}_{k},r_{k}^{\ell})\triangle B_{k},&Z^{\bar{x}_0,\ell}_{k+1}  = & \bar{\pi}^{r_{k+1}^{\ell}}_{\t}\big(\tilde{Z}^{\bar{x}_0,\ell}_{k+1}).
%\end{aligned}
%$$
Recall the definitions of  $F$ and $G$ in (\ref{y++}), we know that
 \begin{align}\la{E_y}
 \tilde{Z}^{x_0,\ell}_{k+1}-\tilde{Z}^{\bar{x}_0,\ell}_{k+1}
   =  Z^{x_0,\ell}_{k}-Z^{\bar{x}_0,\ell}_{k}+ F(Z^{x_0,\ell}_{k},Z^{\bar{x}_0,\ell}_{k},r_{k}^{\ell})\t+G(Z^{x_0,\ell}_{k},Z^{\bar{x}_0,\ell}_{k},r_{k}^{\ell})\t B_{k}.
\end{align}
Without loss the generality,  we prove \eqref{Tty3.8} only for  $0<p\leq 2$. %since the techniques for other cases are completely similar due to Lemma \ref{L-Tay}.
For any  $p\in (0, p^*_\beta) \cap(0, \rho\wedge 2]$, any $\varepsilon>0$, we derive from \eqref{Y2} and   \eqref{E_y} that
\begin{align} \la{FY_3+}
  &  \E \Big[ \left(\varepsilon+\big|\tilde{Z}^{x_0,\ell}_{k+1}-\tilde{Z}^{\bar{x}_0,\ell}_{k+1}\big|^2\right)^{\frac{p}{2}}\xi^{p,\beta}_{r_{k+1}} \big|\mathcal{F}_{t_k}\Big]  \nonumber\\
 \leq & \left(\varepsilon+ \big|Z^{x_0,\ell}_{k}-Z^{\bar{x}_0,\ell}_{k}\big|^2\right)^{\frac{p}{2}}\bigg\{\E\big[\xi^{p,\beta}_{r_{k+1}}\big|\mathcal{F}_{t_k}\big]+ \frac{p}{2} \E\big[\tilde{\varsigma}_k\xi^{p,\beta}_{r_{k+1}}\big|\mathcal{F}_{t_k}\big] + \frac{p(p-2)}{8} \E\big[\tilde{\varsigma}_k^2\xi^{p,\beta}_{r_{k+1}}\big|\mathcal{F}_{t_k}\big]\nn\\
 &~~~~~~~~~~~~~~~~~~~~~~~~~~~~~~~~~~ + \frac{p(p-2)(p-4)}{48} \E\big[\tilde{\varsigma}_k^3\xi^{p,\beta}_{r_{k+1}}\big|\mathcal{F}_{t_k}\big]\bigg\},
\end{align}
   where $\xi^{p,\beta}$ satisfying  \eqref{TyhfF_1+} is given in the proof of Lemma \ref{Tyle2.3}, and
\begin{align*}
\tilde{\varsigma}_k \!= \Big(\varepsilon \!+ \big|Z^{x_0,\ell}_{k}\!-Z^{\bar{x}_0,\ell}_{k}\big|^2\Big)^{-1}&\Big[ 2\big(Z^{x_0,\ell}_{k}\!-Z^{\bar{x}_0,\ell}_{k}\big)^T F(Z^{x_0,\ell}_{k},Z^{\bar{x}_0,\ell}_{k},r_{k}^{\ell})\t+  \big|G(Z^{x_0,\ell}_{k},Z^{\bar{x}_0,\ell}_{k},r_{k}^{\ell})\t B_k\big|^2\\
+&  2\big(Z^{x_0,\ell}_{k}-Z^{\bar{x}_0,\ell}_{k}\big)^T G(Z^{x_0,\ell}_{k},Z^{\bar{x}_0,\ell}_{k},r_{k}^{\ell})\t B_k+ \big|F(Z^{x_0,\ell}_{k},Z^{\bar{x}_0,\ell}_{k},r_{k}^{\ell})\big|^2\t^2\nn\\
 +&2F^T(Z^{x_0,\ell}_{k},Z^{\bar{x}_0,\ell}_{k},r_{k}^{\ell})G(Z^{x_0,\ell}_{k},Z^{\bar{x}_0,\ell}_{k},r_{k}^{\ell})\t B_k \t\Big].
\end{align*}
Using the techniques in the proof of Theorem \ref{4TT:F_1},  by \eqref{I-01},  we deduce that
\begin{align}
 &\E\big[\tilde{\varsigma}_k\xi^{p,\beta}_{r_{k+1}}\big|\mathcal{F}_{t_k}\big]  = \E\big[\E\big(\tilde{\varsigma}_k\xi^{p,\beta}_{r_{k+1}}\big|\mathcal{G}_{t_k}\big)\big|\mathcal{F}_{t_k}\big]
\leq    \frac{\psi(Z^{x_0,\ell}_{k},Z^{\bar{x}_0,\ell}_{k},r_{k}^{\ell})\xi^{p,\beta}_{r_{k}}\t}{\varepsilon+\big|Z^{x_0,\ell}_{k}-Z^{\bar{x}_0,\ell}_{k}\big|^2}+ C\t^{1+2\bar{\theta}}, \la{FY_4+}\\
& \E\Big[\tilde{\varsigma}_k^2\xi^{p,\beta}_{r_{k+1}}|\F_{t_k}\Big]
%=\E\big[\E\big(\tilde{\varsigma}_k^2\xi^{p,\beta}_{r_{k+1}}\big|\mathcal{G}_{t_k}\big)\big|\mathcal{F}_{t_k}\big]
  \geq   \frac{4| (Z^{x_0,\ell}_{k}-Z^{\bar{x}_0,\ell}_{k} )^TG(Z^{x_0,\ell}_{k},Z^{\bar{x}_0,\ell}_{k},r_{k}^{\ell})|^2\xi^{p,\beta}_{r_{k}} \t}{\big(\varepsilon+ |Z^{x_0,\ell}_{k}-Z^{\bar{x}_0,\ell}_{k}|^2\big)^2} -C\t^{1+2\bar{\theta}},\la{FY_5+}
\end{align}
and
\begin{align}\la{FY_6+}
 \E\Big[\tilde{\varsigma}_k^3\xi^{p,\beta}_{r_{k+1}}|\F_{t_k}\Big]
 =\E\Big[\E\big(\tilde{\varsigma}_k^3\xi^{p,\beta}_{r_{k+1}}\big|\mathcal{G}_{t_k}\big)\big|\mathcal{F}_{t_k}\Big]
%\leq&  C{(\varepsilon+\bar{X}_{k}^TQ\bar{X}_{k})^{-3 }}|\bar{X}_{k}|^6 \Big[ \tilde{h}^3(\t)\t^3
% +  \tilde{h}^6(\t)  \t^6
%  \nonumber\\
%&~~~~~~~~~~~~~~~~~~~~~~~~~~~~~~ + \tilde{h}^2(\t)\t^2 \Big(1+\tilde{h}^2(\t)\t^2  \Big)\Big( 1+ \tilde{h}(\t)\t \Big)
% \Big] \E\big[\xi^{p,\beta}_{r_{k+1}}\big|\mathcal{F}_{t_k}\big]\nonumber\\
%%%%%%%%%%%%%%%%%%%%%%%%%%%%%%
 \leq  C\t^{1+2\bar{\theta}}.
\end{align}
Similarly, we  can also prove that for any integer $l>3$, $ \E\big[|\tilde{\varsigma}_k|^l\xi^{p,\beta}_{r_{k+1}}|\F_{t_k}\big]\leq C \t^{1+2\bar{\theta}}.$   For any integer $k\geq 0$,
 substituting (\ref{FY_4+})-(\ref{FY_6+}) into (\ref{FY_3+}), we deduce from (\ref{TyhfF_1+}) that
\begin{align*}
 &\E\Big[ \big(\varepsilon+\big|\tilde{Z}^{x_0,\ell}_{k+1}-\tilde{Z}^{\bar{x}_0,\ell}_{k+1}\big|^2\big)^{\frac{p}{2}}\xi^{p,\beta}_{r_{k+1}}|\F_{t_k}\Big]\nn\\
   \leq & \left(\varepsilon+ \big|Z^{x_0,\ell}_{k}-Z^{\bar{x}_0,\ell}_{k}\big|^2\right)^{\frac{p}{2}} \bigg\{\xi^{p,\beta}_{r_k}-\frac{p\beta_{r_k}}{2}\xi^{p,\beta}_{r_k}\t-\eta_{p,\beta}\xi^{p,\beta}_{r_k}\t  +o\left(\t\right)\nn\\
   &+ \frac{p\t}{2}\frac{(\varepsilon\!+ |Z^{x_0,\ell}_{k}\!-Z^{\bar{x}_0,\ell}_{k}|^2)\psi(Z^{x_0,\ell}_{k},Z^{\bar{x}_0,\ell}_{k},r_{k}^{\ell})
   \!+(p-2)\big| (Z^{x_0,\ell}_{k}\!-Z^{\bar{x}_0,\ell}_{k})^T G(Z^{x_0,\ell}_{k},Z^{\bar{x}_0,\ell}_{k},r_{k}^{\ell})\big|^2}{\big(\varepsilon+|Z^{x_0,\ell}_{k}-Z^{\bar{x}_0,\ell}_{k}|^2\big)^2}\xi^{p,\beta}_{r_k}\bigg\}.
\end{align*}
For any  given $\varrho\in(0, \eta_{p,\beta})$,   choose    $ \t^{**}\in(0, \t^{*}] $ sufficiently small
such that
$
\t^{**}< 1/(\eta_{p,\beta}-\varrho),$  $o\big( \t^{**}\big)\leq\varrho\hat{\xi}^{p,\beta} \t^{**}.
$
Taking expectations on both sides and  letting $\varepsilon\downarrow 0$, then using the theorem on monotone convergence %(see \cite[p.6]{Mao08})
and   Assumption \ref{a5},  for any $\t\in(0, \t^{**}]$, yields
\begin{align*}%\la{FY_06}
 \E\left[\big|\tilde{Z}^{x_0,\ell}_{k+1}-\tilde{Z}^{\bar{x}_0,\ell}_{k+1}\big|^p{\xi}^{p, \beta}_{r^{\ell}_{k+1}}\right]
\leq \left(1+\varrho\t-\eta_{p,\beta}\t \right)\E \Big[\big|Z^{x_0,\ell}_{k}-Z^{\bar{x}_0,\ell}_{k}\big|^p {\xi}^{p, \beta}_{r^{\ell}_k}\Big]
\end{align*}
for any $p\in (0,p^*_{\beta})\cap (0,  \rho]$ and any $\t\in (0, \t^{**}]$.
 Notice that
$
 \big|Z^{x_0,\ell}_{k+1} -Z^{\bar{x}_0,\ell}_{k+1}\big|%=\big|\bar{\pi}^{r^{\ell}_{k+1}}_\t(\tilde{Z}^{x_0,\ell}_{k+1})-\bar{\pi}^{r^{\ell}_{k+1}}_\t(\tilde{Z}^{\bar{x}_0,\ell}_{k+1})\big|
 \leq\big|\tilde{Z}^{x_0,\ell}_{k+1}-\tilde{Z}^{\bar{x}_0,\ell}_{k+1}\big|,
$
which implies that
\begin{align*}
\E\left[|Z^{x_0,\ell}_{k+1}\!-Z^{\bar{x}_0,\ell}_{k+1}|^{p}\xi^{p, \beta}_{r^{\ell}_{k+1}}\right]
\leq  \big(1-(\eta_{p,\beta} -\varrho)\t \big)\E \Big[|Z^{x_0,\ell}_{k} \!-Z^{\bar{x}_0,\ell}_{k}|^{p}\xi^{p, \beta}_{r^{\ell}_k}\Big]
\end{align*}
for any integer $k\geq 0$. Repeating this procedure yields
%\begin{align*} %\la{FY_7}
%\E\left[||Z^{x_0,\ell}_{k+1}\!-Z^{\bar{x}_0,\ell}_{k+1}|^{p}\xi^{p, \beta}_{r^{\ell}_{k+1}}\right]
%% \leq&  \left(1-(\eta_{p,\beta}-\varrho)\t \right)^{k+1}|x_0-\bar{x}_0|^{p}\xi^{p, \beta}_{i}\nn\\
% \leq& \mathrm{e}^{-(\eta_{p,\beta}-\varrho)(k+1)\t}|x_0-\bar{x}_0|^{p}\xi^{p, \beta}_{i}.
%\end{align*}
%This implies that
\begin{align}\la{Y-HF}
 \E\left[|Z^{x_0,\ell}_{k}-Z^{\bar{x}_0,\ell}_{k}|^{p}\right]
 \leq C\mathrm{e}^{-(\eta_{p,\beta}-\varrho)k\t}.
\end{align}
Define
$  \tilde{\tau}  =\inf\{k\geq 0:r_{k}^{\ell}=r_{k}^{\bar{\ell}}\}.$
%Due to the irreducibility of  $r(\cdot)$, there exists $\bar{\theta}>0$ such that
%\begin{align}\label{4.21}
%\mathbb{P}( \tilde{\tau} >k)\leq \mathrm{e}^{-\bar{\theta} k\triangle}
%\end{align}
%for any  integer $k>0$.
For any $p\in (0,p^*_\alpha\wedge p^*_{\beta}\wedge\bar{p})\cap (0,  \rho]$, choose $q>1$ such that $pq\in(0,p^*_\alpha )\cap (0,  \bar{p}]$. Thus,  using  H\"{o}lder's inequality,  \eqref{equ8} and \eqref{Y-HF} yields
\begin{align}\la{h1}
%\begin{split}
 &\mathbb E|Z^{x_0,\ell}_{k}-Z^{\bar{x}_0,\bar{\ell}}_{k}|^{p} \nn\\
 =& \mathbb E\Big(|Z^{x_0,\ell}_{k}-Z^{\bar{x}_0,\bar{\ell}}_{k}|^{p}I_{\{\tilde{\tau} >  \lfloor k/2\rfloor+1 \}}\Big)+\mathbb E\Big(|Z^{x_0,\ell}_{k}-Z^{\bar{x}_0,\bar{\ell}}_{k}|^{p}I_{\{\tilde{\tau} \leq \lfloor k/2\rfloor+1\}}\Big) \nn\\
 \leq&\Big(\mathbb E|Z^{x_0,\ell}_{k}-Z^{\bar{x}_0,\bar{\ell}}_{k}|^{pq}\Big)^{\frac{1}{q}}\Big(\mathbb P(\tilde{\tau} > \lfloor k/2\rfloor+1)\Big)^{1-\frac{1}{q}}
  +\mathbb E\Big[I_{\{\tilde{\tau} \leq \lfloor k/2\rfloor+1\}}\mathbb E\Big(|Z^{x_0,\ell}_{k}-Z^{\bar{x}_0,\bar{\ell}}_{k}|^{p}\big|\mathcal F_{\tilde{\tau} \triangle}\Big)\Big]\nn \\
 \leq&\Big(\mathbb E|Z^{x_0,\ell}_{k}-Z^{\bar{x}_0,\bar{\ell}}_{k}|^{pq}\Big)^{\frac{1}{q}}\Big(\mathbb P(\tilde{\tau} > k/2 )\Big)^{1-\frac{1}{q}}
 +\mathbb E\Big[I_{\{\tilde{\tau} \leq \lfloor k/2\rfloor+1\}}\mathbb E\Big(|Z_{k-\tilde{\tau} }^{Z_{\tilde{\tau} }^{x_0,\ell},r_{\tilde{\tau} }^{\ell}}\!-Z_{k-\tilde{\tau} }^{Z_{\tilde{\tau} }^{\bar{x}_0,\bar{\ell}},r_{\tilde{\tau} }^{\bar{\ell}}}|^{p}\Big)\Big] \nn\\
\leq&\mathrm{e}^{-\frac{q-1}{2q}\bar{\lambda} k\triangle}\Big(\mathbb E|Z^{x_0,\ell}_{k}-Z^{\bar{x}_0,\bar{\ell}}_{k}|^{pq}\Big)^{\frac{1}{q}} +C \mathbb E\Big[I_{\{\tilde{\tau} \leq \lfloor k/2\rfloor+1\}} \mathrm{e}^{-(\eta_{p,\beta}-\varrho)(k-\tilde{\tau})\t}\mathbb E\Big(|Z_{\tilde{\tau} }^{x_0,\ell}-Z_{\tilde{\tau} }^{\bar{x}_0,\bar{\ell}}|^{p}\Big)\Big]\nn \\
 \leq&\mathrm{e}^{-\frac{q-1}{2q}\bar{\lambda} k\triangle}\Big(\mathbb E|Z^{x_0,\ell}_{k}-Z^{\bar{x}_0,\bar{\ell}}_{k}|^{pq}\Big)^{\frac{1}{q}}
 +C\mathrm{e}^{-\frac{(\eta_{p,\beta}-\varrho)(k-2)}{2}\t}\mathbb E\Big(\big|Z_{\tilde{\tau} \wedge( \lfloor k/2\rfloor+1)}^{x_0,\ell}-Z_{\tilde{\tau} \wedge(\lfloor k/2\rfloor+1)}^{\bar{x}_0,\bar{\ell}}\big|^{p}\Big).
%\end{split}
\end{align}
Due to (\ref{I-02}), one observes that  the conclusion of Theorem \ref{4TT:F_1} holds for the new   scheme \eqref{6FY_0}.
 %namely,  for  any $p\in (0,p^*_\alpha)\cap (0,  \bar{p}]$  and any $(x_0,\ell)\in \mathbb{K} \times\mathbb{S}$,
%\begin{align}\label{++}
%\sup\limits_{k\geq0}\mathbb E|Z^{x_0,\ell}_{k}|^{p}\leq C.
%\end{align}
 Then %for  $pq\in(0,p^*_\alpha )\cap (0,  \bar{p}]$,
$
\sup\limits_{k\geq0}\mathbb E\big|Z_k^{x_0,\ell}\big|^{pq}\leq C,~
\sup\limits_{k\geq0}\mathbb E\big|Z_k^{\bar{x}_0,\bar{\ell}}\big|^{pq}\leq C.
$
One further observes that
\begin{align*}
\mathbb E\big|Z_{\tilde{\tau} \wedge(\lfloor k/2\rfloor+1)}^{x_0,\ell}\big|^{p} \leq \sum_{l=0}^{ \lfloor k/2\rfloor+1 }\mathbb E\Big(\big|Z^{x_0,\ell}_{l}\big|^{p}I_{\{\tilde{\tau} \wedge(\lfloor k/2\rfloor+1)=l\}}(\omega)\Big)
 \leq \sum_{l=0}^{ \lfloor k/2\rfloor+1 }\mathbb E\big|Z^{x_0,\ell}_{l}\big|^{p}
 \leq C(2+k/2),
\end{align*}
which implies
$$
C\mathrm{e}^{-\frac{(\eta_{p,\beta}-\epsilon)(k-2)}{2}\t}\mathbb E\Big(\big|Z_{\tilde{\tau} \wedge  (\lfloor k/2\rfloor+1)}^{x_0,\ell}-Z_{\tilde{\tau} \wedge(\lfloor k/2\rfloor+1)}^{\bar{x}_0,\bar{\ell}}\big|^{p}\Big)\rightarrow 0~ \mathrm{as}~ k\rightarrow\infty.
$$
Inserting this into (\ref{h1}) yields the desired  assertion (\ref{Tty3.8}).
 \end{proof}

%In Lemma \ref{yle3.5} the numerical solutions of Scheme \eqref{6FY_0} realize the attractivity of  SDS (\ref{e1}),  that is, the $p$th moment of the distance between any two solutions tends to $0$ as $t\rightarrow \infty$. In fact, the stability of $ X(t)\equiv\mathbf{0}$ in the $p$th moment means that   the $p$th moment of the distance between any solution and $  \mathbf{0}$ tends to $0$ as $t\rightarrow \infty$. So we

Furthermore we  can also use Scheme \eqref{6FY_0} to approximate the stability as follows.
  \begin{cor}\la{CTT:F_1}
Under the conditions of Corollary \ref{CT:F_0}, for any  $p\in (0, p^*_\beta) \cap(0, \rho]$,  any $\varrho\in(0, \eta_{p,\beta})$, there is a constant $ {\t}^{**}\in(0,1]$  such that $\forall\t\in(0,  {\t}^{**}]$, Scheme \eqref{6FY_0} satisfies
  \be\la{FyhfF_4}
 \limsup_{k\rightarrow \infty}\frac{\log\big(\E|Z^{x_0,\ell}_{k}|^{p  }\big)}{k\t} \leq  -(\eta_{p,\beta}-\varrho)<0,
   \ee
and
  \be\la{FyhfF_5}
  \limsup_{k\rightarrow \infty}\frac{\log\big(|Z^{x_0,\ell}_{k}|\big) }{k\t}
   \leq-\frac{\eta_{p,\beta}-\varrho}{p}~~~~~~~a.s.
   \ee
  \end{cor}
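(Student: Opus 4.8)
The plan is to deduce both stability assertions from the one-step contraction already established for Scheme \eqref{6FY_0} in Lemma \ref{yle3.5}, exactly mirroring the way Corollary \ref{CT:F_0} is read off from Lemma \ref{Tyle2.3}. The first observation is that Assumption \ref{a2} makes the origin a genuine fixed point of the scheme: starting from $\bar{x}_0=\mathbf{0}$ we have $Z_0=\bar{\pi}^{\ell}_\t(\mathbf{0})=\mathbf{0}$ by the convention $x/|x|=\mathbf{0}$ at the origin, and if $Z_k=\mathbf{0}$ then $\tilde{Z}_{k+1}=f(\mathbf{0},r_k)\t+g(\mathbf{0},r_k)\t B_k=\mathbf{0}$ by \eqref{eq0solu}, whence $Z_{k+1}=\bar{\pi}^{r_{k+1}}_\t(\mathbf{0})=\mathbf{0}$. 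Thus $Z^{\mathbf{0},\ell}_k\equiv \mathbf{0}$ for every $k$, and the trivial solution survives both the Euler step and the truncation.

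Next I would re-run the argument of Lemma \ref{yle3.5} with $\bar{x}_0=\mathbf{0}$ and $\bar{\ell}=\ell$, but built on the $Q$-weighted functional $\big(Z_k^TQZ_k\big)^{p/2}\xi^{p,\beta}_{r_k}$ matching Assumption \ref{a2}. Since $F(x,\mathbf{0},i)=f(x,i)$ and $G(x,\mathbf{0},i)=g(x,i)$, the dissipativity inequality \eqref{yhfF1} supplies precisely the one-step estimate needed, and the spectral vector $\xi^{p,\beta}\gg\mathbf{0}$ from Lemma \ref{L:1} together with \eqref{TyhfF_1+} again produces the geometric factor $1-(\eta_{p,\beta}-\varrho)\t$. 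Because the two solutions share the initial regime $\ell$, one stops at the direct estimate \eqref{Y-HF} and never invokes the regime-coupling (H\"older) step; this is why the range $p\in(0,p^*_\beta)\cap(0,\rho]$ suffices here, without the extra constraints $p^*_\alpha,\bar{p}$. The upshot is $\E|Z^{x_0,\ell}_k|^{p}=\E|Z^{x_0,\ell}_k-Z^{\mathbf{0},\ell}_k|^{p}\leq C\mathrm{e}^{-(\eta_{p,\beta}-\varrho)k\t}$ for all $\t\in(0,\t^{**}]$; taking logarithms, dividing by $k\t$ and letting $k\to\infty$ yields \eqref{FyhfF_4} immediately.

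For the almost sure statement \eqref{FyhfF_5} I would pass from this moment bound to a pathwise one by a Chebyshev--Borel--Cantelli argument. Writing $\lambda:=\eta_{p,\beta}-\varrho>0$ and fixing an arbitrary $\epsilon\in(0,\lambda/p)$, Chebyshev's inequality gives
\begin{align*}
\PP\Big(|Z^{x_0,\ell}_k|>\mathrm{e}^{-(\lambda/p-\epsilon)k\t}\Big)
\leq \mathrm{e}^{(\lambda-p\epsilon)k\t}\,\E|Z^{x_0,\ell}_k|^{p}
\leq C\mathrm{e}^{-p\epsilon k\t}.
\end{align*}
With $\t>0$ fixed the right-hand side is summable in $k$, so the first Borel--Cantelli lemma yields $|Z^{x_0,\ell}_k|\leq \mathrm{e}^{-(\lambda/p-\epsilon)k\t}$ for all sufficiently large $k$ almost surely, hence $\limsup_{k\to\infty}\frac{\log|Z^{x_0,\ell}_k|}{k\t}\leq -(\lambda/p-\epsilon)$ a.s.; letting $\epsilon\downarrow0$ gives \eqref{FyhfF_5}.

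The Chebyshev--Borel--Cantelli passage is routine, and the deepest estimates are already packaged in Lemma \ref{yle3.5}. The only genuine care needed is the bookkeeping in the second paragraph: verifying that switching from the Euclidean distance of Lemma \ref{yle3.5} to the $Q$-weighted norm leaves the exponent $\eta_{p,\beta}$ unchanged (it depends only on $\Gamma_{p,\beta}$, not on $Q$), that Assumption \ref{a2} rather than Assumption \ref{a5} furnishes the correct dissipativity after the substitution $y=\mathbf{0}$, and that the remainder terms of order $\t^{1+2\bar{\theta}}$ coming from Lemma \ref{Z:1} and the linear bound \eqref{Fe25} stay controllable so that the contraction is preserved through the projection step exactly as in \eqref{Y-HF}.
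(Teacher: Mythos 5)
Your proposal is correct and takes essentially the same route as the paper: the paper's own two-line proof obtains \eqref{FyhfF_4} directly from the contraction estimate \eqref{Y-HF} specialized to the trivial solution and gets \eqref{FyhfF_5} by the Borel--Cantelli lemma, which is precisely your Chebyshev--Borel--Cantelli passage. Your additional bookkeeping (verifying that $\mathbf{0}$ is a fixed point of Scheme \eqref{6FY_0} under \eqref{eq0solu}, and re-running the argument of Lemma \ref{yle3.5} with the $Q$-weighted functional so that Assumption \ref{a2} replaces Assumption \ref{a5}) simply makes explicit what the paper leaves implicit.
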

\begin{proof}Clearly,  (\ref{FyhfF_4}) follows from \eqref{Y-HF} directly.
Moreover,  an application of  the Borel-Cantelli lemma (see, e.g., \cite{Mao06}) results in (\ref{FyhfF_5}),
please refer to \cite[p.600]{Higham2007}.
%Moreover,  an application of  the Borel-Cantelli lemma (see, e.g., \cite[p.7]{Mao08}) results in (\ref{FyhfF_5}),
%please refer to \cite[p.600]{Higham2007}.
\end{proof}

In order for the   ergodicity, we  show the Markov property of  $\big\{(Z^{x_0,\ell}_{k}, r^{\ell}_k)\big\}_{k\geq 0}$.
\begin{lemma}\la{L-4.2}
 $\{(Z_{k},r_{k})\}_{k\geq0}$ is a time homogeneous Markov chain.
\end{lemma}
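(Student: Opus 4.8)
The plan is to exhibit the one-step update of \eqref{6FY_0} as a fixed measurable map of the current state $(Z_k,r_k)$ driven by fresh randomness that is independent of the past and whose joint law does not depend on $k$; the Markov property together with time-homogeneity will then follow by computing the associated one-step transition kernel. Concretely, combining the three lines of \eqref{6FY_0} gives
\begin{align*}
(Z_{k+1},r_{k+1})=\Big(\bar{\pi}^{r_{k+1}}_\t\big(Z_k+f(Z_k,r_k)\t+g(Z_k,r_k)\t B_k\big),\,r_{k+1}\Big),
\end{align*}
so that $(Z_{k+1},r_{k+1})$ is a deterministic Borel function of the quadruple $(Z_k,r_k,\t B_k,r_{k+1})$. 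The two innovations here are the Brownian increment $\t B_k=B(t_{k+1})-B(t_k)$ and the next chain value $r_{k+1}=r(t_{k+1})$.

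First I would record measurability: an easy induction on \eqref{6FY_0} shows that $\tilde Z_k$, $Z_k$ and $r_k$ are $\F_{t_k}$-measurable, whence $\sigma\big((Z_0,r_0),\dots,(Z_k,r_k)\big)\subseteq\F_{t_k}$. Next I would establish the key distributional facts conditional on $\F_{t_k}$: since $B$ has independent increments and is independent of the chain $r(\cdot)$, the increment $\t B_k$ is independent of $\F_{t_k}$ with law $N(\mathbf 0,\t\mathbb I_d)$ for every $k$; since $r(\cdot)$ is a continuous-time Markov chain independent of $B$, the skeleton $\{r_k\}$ is a Markov chain whose conditional law of $r_{k+1}$ given $\F_{t_k}$ is $P_{r_k,\cdot}(\t)$ with $P(\t)=\exp(\t\Gamma)$; and these two innovations are conditionally independent given $\F_{t_k}$. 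Feeding this factorisation into the displayed update, for any bounded measurable $\varphi:\RR^n\times\mathbb S\to\RR$ I obtain
\begin{align*}
\E\big[\varphi(Z_{k+1},r_{k+1})\,\big|\,\F_{t_k}\big]
=\sum_{j\in\mathbb S}P_{r_k j}(\t)\int_{\RR^d}\varphi\Big(\bar{\pi}^{j}_\t\big(Z_k+f(Z_k,r_k)\t+g(Z_k,r_k)w\big),\,j\Big)\,\mathcal N_\t(w)\,\mathrm d w,
\end{align*}
where $\mathcal N_\t$ is the $N(\mathbf 0,\t\mathbb I_d)$ density. The right-hand side is a fixed kernel $(K\varphi)(Z_k,r_k)$ evaluated at $(Z_k,r_k)$, and crucially its functional form does not involve $k$.

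From here the conclusion is routine. Because $K\varphi$ is $\sigma(Z_k,r_k)$-measurable and $\sigma\big((Z_0,r_0),\dots,(Z_k,r_k)\big)\subseteq\F_{t_k}$, the tower property yields $\E\big[\varphi(Z_{k+1},r_{k+1})\mid\sigma((Z_0,r_0),\dots,(Z_k,r_k))\big]=(K\varphi)(Z_k,r_k)$, which is precisely the Markov property; since the kernel $K$ is the same for every $k$, the chain is time-homogeneous. The step I expect to be the only real point of care is the conditional factorisation of the pair $(\t B_k,r_{k+1})$ given $\F_{t_k}$: one must use both the independent-increments property of $B$ and the independence of $B$ and $r(\cdot)$, and---because $Z_{k+1}$ depends on $r_{k+1}$ through the truncation $\bar{\pi}^{r_{k+1}}_\t$---treat $r_{k+1}$ as part of the innovation rather than as a merely future variable. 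Everything else is bookkeeping with the already-established measurability.
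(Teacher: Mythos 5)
Your proof is correct, and it reaches the conclusion by a somewhat different, more self-contained route than the paper. The paper splits the claim in two: it first obtains time-homogeneity by comparing the one-step formula at step $k$ with the formula at step $0$ (using that $\t B_k$ and $\t B_0$ are identical in law), and then obtains the Markov property by writing $(Z_{k+1},r_{k+1})$ as a random function $\big(\theta^{Z_k,r_k}_{k+1,r_{k+1}},\lambda^{r_k}_{k+1}\big)$ of the current state, with $\lambda^{i}_{k+1}:=i+r_{k+1}-r_k$, and invoking the freezing lemma \cite[Lemma 3.2, p.104]{Mao06} for random functions independent of $\F_{t_k}$. You instead compute the one-step transition kernel explicitly, factorising the innovation pair $(\t B_k, r_{k+1})$ conditionally on $\F_{t_k}$ into the Gaussian law of $\t B_k$ and the transition row $P_{r_k j}(\t)$; the Markov property and homogeneity then drop out simultaneously because the kernel's functional form is free of $k$. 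The mathematical substance (one-step update as a fixed map of the state driven by fresh, $k$-independent randomness) is the same, but your execution buys something real: you handle the regime innovation by honest conditioning — the conditional law of $r_{k+1}$ given $\F_{t_k}$ is $P_{r_k j}(\t)$ — rather than by the paper's device of treating the increment $r_{k+1}-r_k$ as a random quantity independent of $\F_{t_k}$, which is delicate since, unlike a Brownian increment, the law of a Markov-chain increment does depend on the $\F_{t_k}$-measurable current state $r_k$. Your treatment of $r_{k+1}$ as part of the innovation with a state-dependent conditional law is the cleaner way to make that step rigorous, and it is consistent with how the paper itself computes conditional expectations in Lemma \ref{Z:1}. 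What the paper's route buys in exchange is brevity: homogeneity is immediate from the step-$0$ comparison, and the citation replaces your explicit kernel computation.
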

\begin{proof}   One observes % If~$(Z_{k},r_{k})=(x,i)$ and~$(x_{0},\ell)=(x,i)$, then  we have
\begin{align*}
Z_{k+1}=\bar{\pi}^{r_{k+1}}_{\t}\big(x+f(x,i)\triangle+g(x,i)\triangle B_{k}\big),~~
%\end{align*}
\mathrm{and}~~
%\begin{align*}
Z_{1}=\bar{\pi}^{r_{1}}_{\t}\big(x+f(x,i)\triangle+g(x,i)\triangle B_{0}\big).
\end{align*}
Since~$\triangle B_{k}$ and~$\triangle B_{0}$ are identical in probability law, comparing the two fomulas above, we know that $(Z_{k+1},r_{k+1})$ and~$(Z_{1}, r_1)$ are identical in probability law under $Z_{k}=x$,  $r_{k}=i$ and $x_{0}=x$,  $r_0=i$ respectively.  Thus
$$
\mathbb{P}\Big((Z_{k+1},r_{k+1})\in \mathbb{D}\times \{j\}\big|(Z_{k},r_{k})=(x,i)\Big) =\mathbb{P}\Big((Z_{1},r_{1})\in \mathbb{D}\times \{j\}\big|(x_{0}, r_0)=(x,i)\Big)
$$
for any $\mathbb{D}\in\mathscr{B}(\RR^n),~j\in \mathbb{S}$, which is the desired homogenous property.
For any $k\geq0$,~$\t\in (0,1]$, $x\in\RR^{n}$,~$i\in \mathbb{S}$, define
$$
 \lambda^{i}_{k+1}:=i+r_{k+1}-r_{k},~~~~
\theta^{x,i}_{k+1, j}:=\bar{\pi}^{ j}_{\t}\big(x+f(x,i)\triangle+g(x,i)\triangle B_{k}\big).
$$
By \eqref{6FY_0} we know that $r_{k+1}=\lambda^{r_k}_{k+1}$ and $Z_{k+1}= \theta^{Z_k,r_k}_{k+1,r_{k+1}}$. Note that $\lambda^{i}_{k+1}$ and $\theta^{x,i}_{k+1, j}$ are bounded measurable random functions
 independent of $\F_{t_k}$. % $\lambda^{i}_{k+1}$ is bounded measurable and independent of  $\theta^{x,i}_{k+1, j}$.
 Hence, for any $\mathbb{D}\times \{j\}\in \mathscr{B}(\RR^n)\times \SS$,  using \cite[Lemma 3.2, p.104]{Mao06} with
 $ \bar{h}\big((x,i), \omega\big)=I_{\mathbb{D}\times \{j\}}\big(\theta^{x,i}_{k+1,\lambda^{i}_{k+1}}, \lambda^{i}_{k+1}\big)$ yields
\begin{align*}
\PP\Big((Z_{k+1},r_{k+1})\in \mathbb{D}\times \{j\}\Big|\F_{t_k}\Big) &= \E\Big(I_ {\mathbb{D}\times \{j\}}\big(Z_{k+1},r_{k+1}\big)\Big|\F_{t_k}\Big)\\
 &=\E\Big(I_{\mathbb{D}\times \{j\} }\big(\theta^{Z_k,r_k}_{k+1,j}, \lambda^{r_k}_{k+1}\big)  \Big|\F_{t_k}\Big)\\
&=\E\Big[I_{\mathbb{D}\times \{j\} }\big(\theta^{x,i}_{k+1,j}, \lambda^{i}_{k+1}\big)\Big]\Big|_{x=Z_k, i=r_k}\\
& = \PP\Big(\big(Z_{k+1},r_{k+1}\big)\in \mathbb{D}\times \{j\}  \Big|(Z_k,r_k)\Big),
\end{align*}
which is the desired Markov property.
\end{proof}

Let $\mathbf{P}_{k\triangle}^{\Delta}(x_0,\ell;\mathrm{d} x\times\{i\})$ be the transition probability  kernel of the pair $\big(Z_{k}^{x_0,\ell},r_{k}^{\ell}\big)$, a time homogeneous Markov chain.
 If $\mu^{\triangle}\in\mathcal P(\RR^n\times \mathbb{S})$ satisfies
\begin{align*}
\mu^{\triangle}(\Upsilon\times\{\ell\})
=\sum_{i=1}^{m}\int_{\RR^n}\mathbf{P}_{k\triangle}^{\triangle}(x,i;\Upsilon\times\{\ell\})\mu^{\triangle}(\mathrm{d}x\times\{i\}),
~\Upsilon\in\mathscr{B}(\RR^n),~\ell\in \SS
\end{align*}
for any $k\geq0$, then $\mu^{\triangle}$ is called an invariant measure of $\big(Z_{k}^{x_0, \ell},r_{k}^{\ell}\big)$. Moreover, such an invariant measure $\mu^{\triangle}\in\mathcal P(\RR^n\times \mathbb{S})$ is also called a numerical invariant measure. %of $\big(Z^{x_0, \ell}_t,r^{\ell}_t\big)$.
Next we give the existence and uniqueness of the numerical invariant measure
for  SDS (\ref{e1}) using Scheme \eqref{6FY_0}.
\begin{theorem}\la{yth3.2}
Under the conditions of Theorem \ref{yth3.1},
  for any $\t \in(0, \t^{**}]$,
 the numerical solutions of Scheme \eqref{6FY_0} admit a unique invariant measure $\mu^{\Delta}\in \mathcal{P}(\RR^n \times \SS)$.
\end{theorem}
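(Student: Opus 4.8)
The plan is to mirror the proof of Theorem \ref{yth3.1} for the exact solution, replacing each continuous-time ingredient by its discrete counterpart already prepared for Scheme \eqref{6FY_0}. By Lemma \ref{L-4.2}, $\{(Z_k^{x_0,\ell},r_k^{\ell})\}_{k\ge 0}$ is a time-homogeneous Markov chain with transition kernel $\mathbf{P}_{k\t}^{\t}$, so that an invariant measure in the sense defined above is precisely a stationary distribution of this chain. The two tasks are then existence (via a Krylov--Bogolyubov/tightness argument) and uniqueness (via the attractivity of Lemma \ref{yle3.5} together with the irreducibility of $r(\cdot)$). Throughout I fix $\t\in(0,\t^{**}]$ and work with a $p$ in the common admissible range supplied by Lemma \ref{yle3.5}.

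For existence, I would form the Ces\`aro averages
\[
\hbar_{k}^{\t}(\Upsilon)=\frac{1}{k}\sum_{l=0}^{k-1}\mathbf{P}_{l\t}^{\t}(x_0,\ell;\Upsilon),\qquad \Upsilon\in\mathscr{B}(\RR^n)\times\SS .
\]
Since the linear-growth bounds \eqref{I-02} guarantee that the conclusion of Theorem \ref{4TT:F_1} carries over to Scheme \eqref{6FY_0}, we have $\sup_{k\ge 0}\E|Z_k^{x_0,\ell}|^{p}\le C$; Chebyshev's inequality then makes $\{\hbar_k^{\t}\}_{k\ge 1}$ tight on $\RR^n\times\SS$ (tightness in the discrete component being automatic because $\SS$ is finite), so by Prokhorov's theorem some subsequence converges weakly to a measure $\mu^{\t}$. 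To upgrade this subsequential limit to an invariant measure I would verify the Feller property of the one-step kernel: for bounded continuous $\phi$,
$
(\mathbf{P}_\t^{\t}\phi)(x,i)=\E\,\phi\big(\bar{\pi}_\t^{r_1}(x+f(x,i)\t+g(x,i)\t B_0),\,r_1\big)
$
depends continuously on $x$ because $f(\cdot,i)$, $g(\cdot,i)$ and the radial truncation $\bar{\pi}_\t^{j}$ are continuous, so dominated convergence applies. The standard Krylov--Bogolyubov argument then shows $\mu^{\t}$ is invariant.

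For uniqueness, I would combine Lemma \ref{yle3.5} with the irreducibility bound \eqref{equ8}. Using the natural coupling that drives both chains by the same Brownian increments and chain transitions, for any $(x_0,\ell),(\bar{x}_0,\bar{\ell})\in\mathbb{K}\times\SS$ one gets
\[
W_{p}\big(\delta_{(x_0,\ell)}\mathbf{P}_{k\t}^{\t},\,\delta_{(\bar{x}_0,\bar{\ell})}\mathbf{P}_{k\t}^{\t}\big)\le \E\big|Z_k^{x_0,\ell}-Z_k^{\bar{x}_0,\bar{\ell}}\big|^{p}+\PP\big(r_k^{\ell}\neq r_k^{\bar{\ell}}\big)\longrightarrow 0
\]
as $k\to\infty$, the first term by \eqref{Tty3.8} and the second by irreducibility. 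If $\mu^{\t}$ and $\bar{\mu}^{\t}$ are both invariant, then $W_{p}(\mu^{\t},\bar{\mu}^{\t})=W_{p}(\mu^{\t}\mathbf{P}_{k\t}^{\t},\bar{\mu}^{\t}\mathbf{P}_{k\t}^{\t})$ for every $k$, and the duality representation of $W_{p}$ bounds this by $\int\!\!\int W_{p}(\delta_{(x,i)}\mathbf{P}_{k\t}^{\t},\delta_{(y,j)}\mathbf{P}_{k\t}^{\t})\,\mu^{\t}(\mathrm{d}x\times\{i\})\bar{\mu}^{\t}(\mathrm{d}y\times\{j\})$, which tends to $0$ by the display above and dominated convergence. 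Hence $W_{p}(\mu^{\t},\bar{\mu}^{\t})=0$, so the two measures coincide.

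The tightness estimate and the Wasserstein bookkeeping are routine once Theorem \ref{4TT:F_1} and Lemma \ref{yle3.5} are invoked. I expect the main obstacle to be the rigorous verification that the subsequential limit $\mu^{\t}$ is genuinely invariant, that is, establishing the Feller property of the numerical kernel and justifying the limit interchange in the Krylov--Bogolyubov step, since the state-dependent, non-smooth truncation $\bar{\pi}_\t^{i}$ enters the one-step transition and must be handled with care — although, as noted, its continuity is exactly what is needed to make the argument go through.
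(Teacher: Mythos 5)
Your proposal follows essentially the same route as the paper's proof: existence via tightness of the Ces\`aro averages of the laws (obtained from the moment bound of Theorem \ref{4TT:F_1} via Chebyshev's inequality), and uniqueness via the attractivity in Lemma \ref{yle3.5} combined with the irreducibility bound \eqref{equ8} and the Wasserstein duality/triangle estimates. The only difference is that you explicitly verify the Feller property of the one-step kernel to justify that the subsequential weak limit is invariant (the Krylov--Bogolyubov step), a point the paper leaves implicit; this is a sound filling-in of detail rather than a different method.
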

\begin{proof}    For arbitrary integer $l>0$,
define a measure sequence $\{\hbar_{l}\}$ that
\begin{align*}
\hbar_{l}\big(\Upsilon\times \{i\}\big):=\frac{1}{l+1}\sum_{k=0}^{l} \mathbb{P}\Big((Z^{x_0,\ell}_{k},r^{\ell}_{k})\in \Upsilon\times \{i\}\Big)
\end{align*}
for any $\Upsilon\in\mathscr{B}(\RR^n)$ and $i\in \SS$.
For any $p\in (0,p^*_\alpha)\cap (0,  \bar{p}]$ and $\t\in(0, \t^{**}]$, by Theorem \ref{4TT:F_1} and Chebyshev's inequality, we derive that $\{\hbar_{l}\}$ is tight, then one can extract a subsequence which converges weakly to an invariant measure.
  Thus, the numerical solution  $(Z_k^{x_0,\ell},r_k^{\ell})$   has an invariant measure  $\mu^{\triangle}$.
For any $\t\in(0, \t^{**}]$, we have established the existence of  the numerical   invariant measure, and now we further show its uniqueness. For any $p\in (0,p^*_\alpha\wedge p^*_{\beta}\wedge\bar{p})\cap (0,\rho]$, it follows from  \eqref{equ8} and (\ref{Tty3.8}) that
%\begin{align}\la{4.32yhf}
%\mathbb P(r^{\ell}_k\neq r^{\bar{\ell}}_k)=\mathbb P(\tilde{\tau}>k)\leq \mathrm{e}^{-\bar{\theta} k\triangle}
%\end{align}
%for any $k>0$. Therefore, %for any $p\in (0,p^*_\alpha\wedge p^*_{\beta}\wedge\bar{p})\cap (0,  \rho]$,
%we derive from (\ref{Tty3.8}) and (\ref{4.32yhf}) that
\begin{align}\la{y2.15}
W_{p}(\delta_{(x_0,\ell)}\mathbf{P}_{k\triangle}^{\triangle},\delta_{(\bar{x}_0,\bar{\ell})}\mathbf{P}_{k\triangle}^{\triangle})\leq \mathbb{E}\big|Z_{k}^{x_0,\ell}-Z_{k}^{\bar{x}_0,\bar{\ell}}\big|^{p}+\mathbb{P}(r_{k}^{\ell}\neq r_{k}^{\bar{\ell}})\rightarrow 0,~~k\rightarrow\infty.
\end{align}
 Assume both $\mu^{\triangle}$ and  $\bar{\mu}^{\triangle}\in \mathcal P(\RR^n\times \mathbb{S})$ are invariant measures, then we have
\begin{align*}
W_{p}(\mu^{\triangle},\bar{\mu}^{\triangle})
\leq W_{p}(\delta_{(x,i)}\mathbf{P}_{k\triangle}^{\triangle},\mu^{\triangle}) +W_{p}(\delta_{(x,i)}\mathbf{P}_{k\triangle}^{\triangle},\delta_{(y,j)}\mathbf{P}_{k\triangle}^{\triangle})
+W_{p}(\delta_{(y,j)}\mathbf{P}_{k\triangle}^{\triangle},\bar{\mu}^{\triangle}).
%=&W_{p}(\nu^{\triangle}\mathbf{P}_{k\triangle}^{\triangle},\bar{\nu}^{\triangle}
%\mathbf{P}_{k\triangle}^{\triangle})\nn\\
%
% \leq&\int_{\RR^{n}\times \mathbb{S}}\int_{\RR^{n}\times \mathbb{S}}
% W_{p}\big(\delta_{(x,i)}\mathbf{P}_{k\triangle}^{\triangle},
% \delta_{(y,j)}\mathbf{P}_{k\triangle}^{\triangle}\big)\nu^{\triangle}\big(\mathrm{d}x\times\{i\}\big)\bar{\nu}^{\triangle}\big(\mathrm{d}y\times\{j\}\big).
\end{align*}
Due to  (\ref{y2.15}), $
W_{p}(\mu^{\triangle},\bar{\mu}^{\triangle})\rightarrow 0$ holds as $k\rightarrow\infty.
$
The desired assertion follows. \end{proof}

The following theorem reveals that numerical invariant measure $\mu^{\triangle}$ converges in the Wassertein distance to the underlying one $\mu$.
\begin{theorem}\la{yth3.3}
Under the conditions of  Theorem \ref{yth3.1},
$
\lim_{\triangle\to 0}W_{p}(\mu,\mu^{\triangle})=0.
$
\end{theorem}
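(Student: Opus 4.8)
The plan is to sandwich $\mu$ and $\mu^{\triangle}$ between the laws of the exact and numerical flows issued from a common point $(x_0,\ell)$ after a long run, and to balance a triangle inequality for $W_p$ against the finite-horizon error. Fix $(x_0,\ell)$; for a time $T>0$ put $k=\lfloor T/\triangle\rfloor$, so that $k\triangle\to T$ as $\triangle\to0$, and decompose
\begin{align*}
W_{p}(\mu,\mu^{\triangle})
\le{}& W_{p}\big(\mu,\delta_{(x_0,\ell)}\mathbf{P}_{k\triangle}\big)
+ W_{p}\big(\delta_{(x_0,\ell)}\mathbf{P}_{k\triangle},\delta_{(x_0,\ell)}\mathbf{P}^{\triangle}_{k\triangle}\big)\\
&+ W_{p}\big(\delta_{(x_0,\ell)}\mathbf{P}^{\triangle}_{k\triangle},\mu^{\triangle}\big)
=: (I)+(II)+(III).
\end{align*}

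First I would dispose of $(I)$ and $(III)$ by the two attractivity statements. Since $\mu$ is invariant, $\mu=\mu\mathbf{P}_{k\triangle}$, so the duality representation of $W_p$ gives
\[
(I)=W_{p}\big(\mu\mathbf{P}_{k\triangle},\delta_{(x_0,\ell)}\mathbf{P}_{k\triangle}\big)
\le \int_{\RR^{n}\times\SS} W_{p}\big(\delta_{(y,j)}\mathbf{P}_{k\triangle},\delta_{(x_0,\ell)}\mathbf{P}_{k\triangle}\big)\,\mu(\mathrm{d}y\times\{j\}),
\]
and, by \eqref{eqn5}, the integrand is dominated by $\mathbb{E}|X^{y,j}_{k\triangle}-X^{x_0,\ell}_{k\triangle}|^{p}+\mathbb{P}(r^{j}_{k\triangle}\ne r^{\ell}_{k\triangle})$, which tends to $0$ as $k\triangle\to\infty$ by Lemma \ref{Tyle2.3} and the irreducibility bound \eqref{equ8}; the $\triangle$-free moment bound of Theorem \ref{Tyle2.2+} supplies the uniform integrability that lets the integral vanish. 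An entirely parallel argument, now using $\mu^{\triangle}=\mu^{\triangle}\mathbf{P}^{\triangle}_{k\triangle}$ together with \eqref{y2.15}, Lemma \ref{yle3.5} (i.e. \eqref{Tty3.8} and the exponential rate \eqref{Y-HF}), shows $(III)\to0$ as $k\to\infty$.

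For the middle term $(II)$ I would invoke the finite-horizon strong convergence. Coupling the exact and numerical trajectories through the same Brownian motion and the same Markov chain $\{r_k\}$, the discrete components agree at the grid point $k\triangle$, so $\mathbb{P}(r^{\ell}_{k\triangle}\ne r_k)=0$ and hence
\[
(II)\le \mathbb{E}\big[d_{p}\big((X^{x_0,\ell}_{k\triangle},r^{\ell}_{k\triangle}),(Z^{x_0,\ell}_{k},r_k)\big)\big]=\mathbb{E}\big|X(k\triangle)-Z(k\triangle)\big|^{p}.
\]
Because the Lipschitz and linear-growth bounds \eqref{I-01}--\eqref{I-02} let Theorem \ref{T:C_2} apply verbatim to Scheme \eqref{6FY_0}, and because the admissible $p$ satisfies $p<\bar p$, this right-hand side (with $q=p$) tends to $0$ as $\triangle\to0$ for the \emph{fixed} $T$.

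It remains to arrange the limits. Given $\e>0$, I would first choose $T$ large enough that $(I)<\e/3$ for all times $\ge T-1$ and, uniformly in $\triangle\in(0,\triangle^{**}]$, $(III)<\e/3$; freezing this $T$ and setting $k=\lfloor T/\triangle\rfloor$, the finite-time convergence then forces $(II)<\e/3$ for all small $\triangle$, giving $W_p(\mu,\mu^{\triangle})<\e$. The main obstacle is precisely this interchange of $T\to\infty$ and $\triangle\to0$: one must verify that the exponential attractivity entering $(III)$ holds with a rate and prefactor \emph{independent of} $\triangle$, so that $T$ can be fixed before $\triangle$ is sent to zero. This is exactly what the $\triangle$-uniform moment boundedness of Theorem \ref{4TT:F_1} (controlling $\int|x_0-y|^{p}\,\mu^{\triangle}(\mathrm{d}y\times\{j\})$ uniformly) and the $\triangle$-independent irreducibility rate $\bar\lambda$ in \eqref{equ8} are there to guarantee; checking this uniformity carefully is the crux of the argument, the rest being assembly.
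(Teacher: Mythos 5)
Your proposal is correct and follows essentially the same route as the paper's own proof: the identical three-term triangle inequality through $\delta_{(x_0,\ell)}\mathbf{P}_{k\triangle}$ and $\delta_{(x_0,\ell)}\mathbf{P}_{k\triangle}^{\triangle}$, with the two outer terms handled by invariance plus convexity of $W_p$ and the attractivity estimates \eqref{eqn5} and \eqref{y2.15}, and the middle term by Theorem \ref{T:C_2} applied to Scheme \eqref{6FY_0}. The only real difference is the bookkeeping of the double limit: the paper fixes an integer $\bar{k}$ and asserts the bound \eqref{f1} uniformly in $\triangle\in(0,\t^{**}]$, whereas you fix a physical time $T$ and set $k=\lfloor T/\triangle\rfloor$; your version is the more careful one, since the attractivity bounds decay in the physical time $k\triangle$ (so uniformity in $\triangle$ at fixed $\bar{k}$ is precisely the delicate point you flag), and it is the natural way to make the paper's argument airtight.
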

\begin{proof} By the virtues of Theorems  \ref{yth3.1} and \ref{yth3.2}, we have
\begin{align*}
W_{p}(\delta_{(x,i)}\mathbf{P}_{k\triangle},\mu)\leq \int_{\RR^{n}\times \SS}\mu(\mathrm{d}y\times {\{j\}})W_{p}(\delta_{(x,i)}\mathbf{P}_{k\triangle},\delta_{(y,j)}\mathbf{P}_{k\triangle}),
\end{align*}
and
\begin{align*}
W_{p}(\delta_{(x,i)}\mathbf{P}_{k\triangle}^{\triangle},\mu^{\triangle})\leq \int_{\RR^{n}\times \SS}\mu^{\triangle}(\mathrm{d}y\times {\{j\}})W_{p}(\delta_{(x,i)}\mathbf{P}_{k\triangle}^{\triangle},
\delta_{(y,j)}\mathbf{P}_{k\triangle}^{\triangle})
\end{align*}
for any $\t\in(0, \t^{**}]$. Hence
$$
\lim_{k\to \infty}W_{p}(\delta_{(x,i)}\mathbf{P}_{k\triangle},\mu)=\lim_{k\to \infty}W_{p}(\delta_{(x,i)}\mathbf{P}_{k\triangle}^{\triangle},\mu^{\triangle})=0.
$$
Thus for any $\epsilon>0$, there exists a positive integer $\bar{k}>0$ sufficiently large such that
\begin{align}\la{f1}
W_{p}(\delta_{(x,i)}\mathbf{P}_{\bar{k}\triangle},\mu)
+W_{p}(\delta_{(x,i)}\mathbf{P}_{\bar{k}\triangle}^{\triangle},\mu^{\triangle})\leq \frac{\epsilon}{2},~~~~\forall ~\triangle\in(0, \t^{**}].
\end{align}
Moreover,  by Theorem \ref{T:C_2},  there is a $\t_1\in (0, \t^{**}]$  such that
$
 W_{p}(\delta_{(x,i)}\mathbf{P}_{\bar{k}\triangle},\delta_{(x,i)}\mathbf{P}_{\bar{k}\triangle}^{\triangle})< {\epsilon}/{2}
$
for any $\t\in(0, \t_1]$. Therefore, for any $\t\in(0, \t_1]$,
\begin{align*}
W_{p}(\mu,\mu^{\triangle}) \leq  W_{p}(\delta_{(x,i)}\mathbf{P}_{\bar{k}\triangle},\mu)
+W_{p}(\delta_{(x,i)}\mathbf{P}_{\bar{k}\triangle}^{\triangle},\mu^{\triangle})
+W_{p}(\delta_{(x,i)}\mathbf{P}_{\bar{k}\triangle},\delta_{(x,i)}\mathbf{P}_{\bar{k}\triangle}^{\triangle})<\epsilon.
\end{align*}
The proof is complete.
\end{proof}

\begin{rem}
  For the given stepsize $\t$ and $\bar{h}(\t)$, it is easy to find $\hat{\bar{\varphi}}^{-1}(\bar{h}(\t))$,  the minimum of all $\bar{\varphi}_i^{-1}(\bar{h}(\t))$. Define the uniform truncation mapping $\bar{\pi}_\t(x)=  \Big(|x|\wedge \hat{\bar{\varphi}}^{-1}(\bar{h}(\t))\Big)  {x}/{|x|}. $
  Clearly, the properties  \eqref{I-01} and \eqref{I-02} still hold  for Scheme \eqref{6FY_0} with each $\bar{\pi}^i_\t(x)=\bar{\pi}_\t(x)$. This implies that all results on numerical solutions in this section still hold for this uniformly truncated scheme.
\end{rem}
\subsection{Numerical examples}\la{exp}
Before closing this section we carry out some  simulations   to illustrate the efficiency of the  scheme (\ref{6FY_0}) in the approximation of invariant measures.

Recall Example \ref{exp3.1}.  Assumption \ref{a1} holds with $\bar{p}=5/3$ and $\alpha_1=5$, $\alpha_2=-0.64$. Due to
more computations %result in
%\begin{align*}
%&|x-y|^{2}\big[2(x-y)^T(f(x,1)-f(y,1))+|g(x,1)-g(y,1)|^2\big]
%  -1.9|(x-y)^{T}(g(x,1)-g(y,1))|^{2}\nn\\
%\leq& |x-y|^{2}\Big[5|x-y|^{2}-5(|x|+|y|)(|x|-|y|)^2+6(|x|^{\frac{3}{2}}-|y|^{\frac{3}{2}})^2\Big]
% -5.7|x-y|^{2}(|x|^{\frac{3}{2}}-|y|^{\frac{3}{2}})^2\nn\\
%% =&5|x-y|^{4}-5|x-y|^{2}(|x|+|y|)(|x|-|y|)^2+0.3|x-y|^{2}(|x|^{\frac{3}{2}}-|y|^{\frac{3}{2}})^2\nn\\
%=&5|x-y|^{4}-5|x-y|^{2}(|x|+|y|)(|x|-|y|)^2+0.3|x-y|^{2}(|x|^{3}-2|y|^{\frac{3}{2}}|x|^{\frac{3}{2}}+|y|^{3})\nn\\
%\leq&5|x-y|^{4}-4.4|x-y|^{2}(|x|+|y|)(|x|-|y|)^2
%%\end{align*}
%and
%\begin{align*}
%&|x-y|^{2}\big[2(x-y)^T(f(x,2)-f(y,2))+|g(x,2)-g(y,2)|^2\big]
%  -1.9|(x-y)^{T}(g(x,2)-g(y,2))|^{2}\nn\\
%\leq& |x-y|^{2}\Big[-2|x-y|^{2}+1.45(|x|-|y|)^2\Big] -0.551|x-y|^{2}(|x|-|y|)^2\nn\\
%\leq&-2|x-y|^{4} +0.9 |x-y|^{2}(|x|-|y|)^2
%\leq -1.1|x-y|^{4},
%\end{align*}
 Assumption \ref{a5} holds with $\rho=0.8$ and  $\beta_1=5$, $\beta_2=-0.898$.
 Solving the linear equation (\ref{eq:a1.2}) results in $
\pi=\left(\pi_1, \pi_2\right)=\left( {1}/{21},  {20}/{21}\right).
$ Clearly, $$
\pi \alpha=\pi_1 \alpha_1+\pi_2 \alpha_2= -7.8/21<0, ~\pi \beta=\pi_1 \beta_1+\pi_2 \beta_2=- 12.96/21<0.
$$
It follows  from Theorem \ref{yth3.1} that   exact solutions of stochastic volatility model with random switching  between (\ref{sex1}) and (\ref{sex2}) admit  a unique invariant measure $\mu\in \mathcal{P}(\RR^n\times \SS)$.
By the virtues of Theorems \ref{yth3.2} and \ref{yth3.3} Scheme \eqref{6FY_0} produces a unique numerical invariant measure $\mu^{\t}\in \mathcal{P}(\RR^n\times \SS)$    converging to  $\mu$ in the Wasserstein metric. To the best of our knowledge almost all numerical methods  in the literatures such as \cite{Yuan3,Bao2016,Liu} cannot treat this case.

Secondly we carry out some  numerical experiments.
  Basing on the model structure,  we begin to specify the explicit truncated EM scheme \eqref{6FY_0}.% for the stochastic volatility model.  % with random switching  between (\ref{sex1}) and (\ref{sex2}) to approximate the invariant measure.

\vspace*{4pt}\noindent{\bf Step 1.} Choose $\bar{\varphi}_i(\cdot)$ and $\bar{h}(\cdot)$.  Compute
\begin{align*}
 \sup_{|x|\vee|y|\leq u, x\neq y}  \Big(\frac{ |f(x, 1)-f(y,1)|}{|x-y|}\vee \frac{  |g(x, 1)-g(y, 1)|^2}{|x-y|^2}\Big)
%\leq& \sup_{|x|\vee|y|\leq u, x\neq y} \Big( \frac{ 2.5|x-y|(1+|x|+|y|)}{|x-y|}\vee\frac{ 6|x-y|^2\big(|x|+|y|+|x|^{1/2}|y|^{1/2}\big)}{|x-y|^2}\Big)
\leq   18u,~~~~\forall u\geq 1,
\end{align*}
  and
\begin{align*}
 \sup_{|x|\vee|y|\leq u, x\neq y}  \Big(\frac{ |f(x, 2)-f(y,2)|}{|x-y|}\vee \frac{  |g(x,2)-g(y,2)|^2}{|x-y|^2}\Big)
%=  \sup_{|x|\vee|y|\leq u, x\neq y}  \Big(1\vee\frac{ 1.45(|x|-|y|)^2}{|x-y|^2}\Big)
\leq  1.45,~~~~\forall u\geq 1.
\end{align*}
 Then choose
$
\bar{\varphi}_1(u)=18u, ~ \bar{\varphi}_2(u)=1.45, ~\forall u \geq 1,
$  which implies $$\bar{\varphi}_{1}^{-1}(u)=u/18,~~~~
 ~\bar{\varphi}_{2}^{-1}(u)=+\infty ,  ~\forall u\geq 18.$$  Let  $\bar{h}(\t)=54\t^{-0.4}, ~\forall\t\in (0,1]$. Thus (\ref{6Fe22}) holds.

\vspace*{4pt}\noindent{\bf Step 2.} {M{\scriptsize ATLAB}} code. Next we specify the {M{\scriptsize ATLAB}} code for calculating the truncated EM  approximation $Z(t)$:
\vspace{-1.2em}
\begin{lstlisting}%[firstline=1, lastline=23]%[firstline=1, lastline=19, float,caption=A floating example]

%MATLAB code for calculating the truncated EM approximation Z(t)
clear all;
T=100; dt=2^(-9); Gam=[-4 4;0.2 -0.2]; c=expm(Gam*dt); h=54*dt^(-2/5);
Z=zeros(2,T/dt+1,'double'); r=zeros(1,T/dt+1,'double'); Z(:,1)=[1;1];
r(1)=2; dB=sqrt(dt)*randn(2,T/dt); v=h/18; %Obviously, v>norm(Z(:,1));
for n=1:T/dt
    if r(n)==1
        Z(:,n+1)=Z(:,n)+2.5*Z(:,n)*(1-norm(Z(:,n)))*dt+...
            [-1  sqrt(2);sqrt(2) 1]*norm(Z(:,n))^(3/2)*dB(:,n);
    else
        Z(:,n+1)=Z(:,n)+([1;2]-Z(:,n))*dt+...
            [0.2 -0.5;1 0.4]*norm(Z(:,n))*dB(:,n);
    end
    if rand<=c(r(n),1)
        r(n+1)=1;
        if norm(Z(:,n+1))>v
            Z(:,n+1)=v*Z(:,n+1)/norm(Z(:,n+1));
        end
    else
        r(n+1)=2;
    end
end
\end{lstlisting}
\begin{figure}[!ht]%[!htbp]
  \centering
\includegraphics[width=12cm,height=6.3cm]{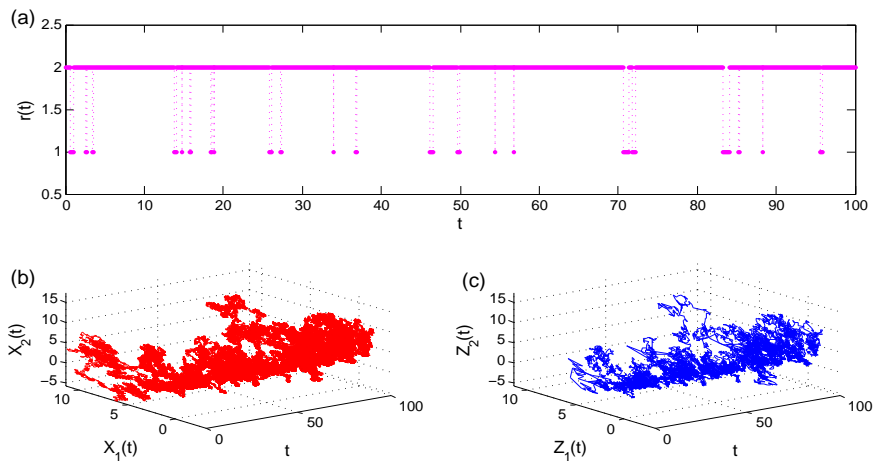}
  \caption{(a)  Computer simulation of a single path of  Markov chain $r(t)$.  (b) A sample path of exact solution $X(t)$ in 3D settings. (c) A sample path of numerical solution $Z(t)$ in 3D settings. The red trajectory represents the exact solution (i.e. the numerical solution of Scheme \eqref{6FY_0} with $\triangle=2^{-18}$) while  the blue trajectory represents the numerical solution of Scheme \eqref{6FY_0} with $\triangle=2^{-9}$.}
  \label{figure1}
\end{figure}
\begin{figure}[!ht]%[!htbp]
  \centering
\includegraphics[width=14cm,height=8.5cm]{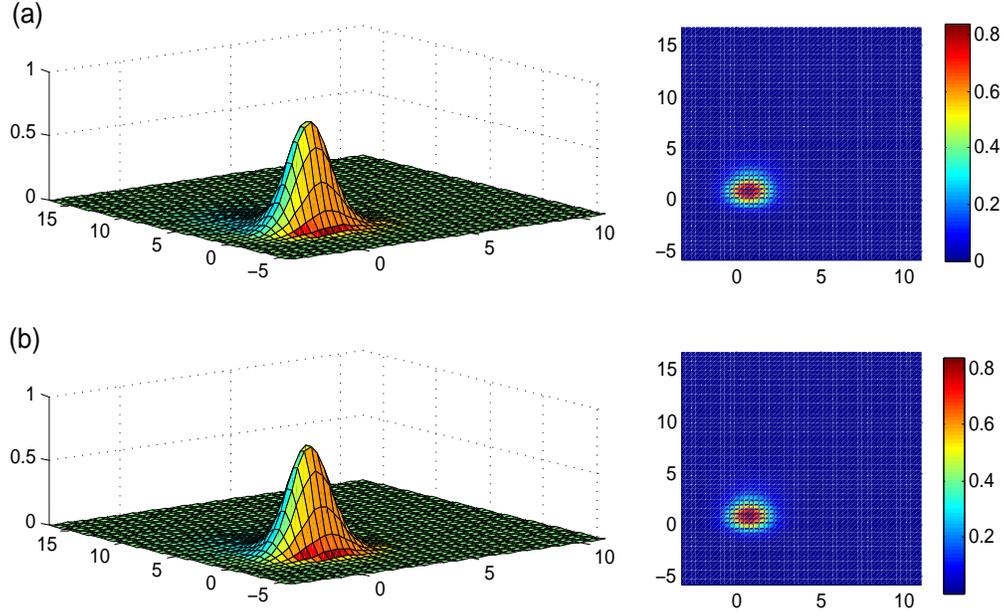}
  \caption{(a)  The empirical density of  $\mu^{\t}$ in 2D and 3D settings.  (b) The   empirical density of  $\mu$ in 2D and 3D settings.   }
  \label{figure2}
\end{figure}
\vspace*{4pt}\noindent{\bf Step 3.} MATLAB simulations.
  Without the closed-form,
the more precise numerical solution $X(t)=(X_1(t), X_2(t))^T$ with $\triangle=2^{-18}$   is a good substitute of the  exact solution.  We compare it with the other numerical solution $ Z(t)=(Z_1(t), Z_2(t))^T$  with  stepsize $\triangle=2^{-9}$.
Figure \ref{figure1} depicts the paths of the Markov chain (Figure \ref{figure1}(a)), $X(t)$ (Figure \ref{figure1}(b)) and $Z(t)$ (Figure \ref{figure1}(c)) for $t\in [0, 100]$.
With 51201 iterations, Figure \ref{figure2}(a) depicts the empirical density of  $\mu^{\t}$ in 2D and 3D settings  while Figure \ref{figure2}(b) depicts the empirical density of  $\mu$ in 2D and 3D settings. It is evident to see that these two density pictures   are very similar.  To  support the theoretical results deeply, we further plot the empirical cumulative distribution function (ECDF) of $ Z(t)$ with the blue dashed line  and the ECDF of  $ X(t)$ with the red solid line.
 To measure the similarity quantitatively, we use the Kolmogorov-Smirnov test  \cite{Massey} to test the alternative hypothesis that  the exact   and numerical invariant measures are from different distributions against the null hypothesis that they are from the same distribution for each component. With 2\% significance level, the  Kolmogorov-Smirnov test
indicates that we cannot reject the null hypothesis. So
the numerical invariant measure  approximates the underlying exact invariant measure very well.
\begin{figure}[!ht]%[!htbp]
  \centering
\includegraphics[width=15cm,height=7cm]{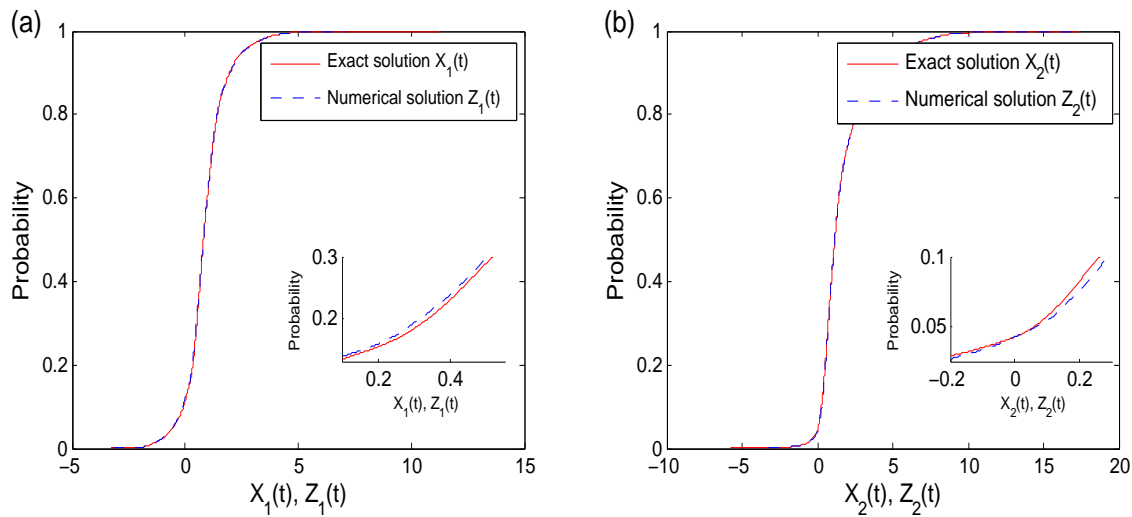}
  \caption{(a)  The ECDFs of $X_1(t)$, $Z_1(t)$.  (b) The ECDF of $X_2(t)$, $Z_2(t)$.   The red solid line represents the exact solution of the switching system while the blue dashed line represents the numerical solution of the switching system.}
  \label{figure3}
\end{figure}

To further illustrate the validity,  we reanalyze  the existence of the invariant measure and its approximation of SDS \eqref{exyhf4.1} . % with non-globally  Lipschitz continuous drift coefficients again.

\begin{expl}\la{exp5.3}{\rm  Let $r(t)$ be a Markov chain with the state space $\SS=\{1, 2\}$ and the generator  $$\Gamma=\left(
  \begin{array}{ccc}
    -\gamma  & \gamma  \\
    3 & -3
  \end{array}
\right)
$$ for some $\gamma >0$. Its unique stationary distribution $\pi=\left(\pi_1, \pi_2\right)\in \mathbb{R}^{1\times 2}$ is given by
$
 \pi_1= {3}/({3+\gamma }), ~\pi_2= {\gamma }/({3+\gamma }).
$
 Consider the scalar hybrid cubic SDS \eqref{exyhf4.1}  with the initial value $(x_0, \ell)=(0.5, 2)$, and  coefficients
$$
a(1)=1,~b(1)=-1,~\sigma(1)=2;~a(2)=2,~b(2)=0,~\sigma(2)=-1.
$$
 Thus,
  Assumption \ref{a1} holds with any negative constant $\alpha_1 $ and $\alpha_2=3+\bar{p}$ for any $\bar{p}>0$.  Moreover, one observes %we observe that
%\begin{align*}
%&|x-y|^{2}\big[2(x-y)^T(f(x, i)-f(y, i))+|g(x, i)-g(y, i)|^2\big]
% -(2-\rho)|(x-y)^{T}(g(x, i)-g(y, i))|^{2}\nn\\
%=& |x-y|^{2} \big[(2a_i+\sigma^2_i)|x-y|^2+2b_i|x-y|^2(x^2+xy+y^2)\big] -(2-\rho)\sigma^2_i|x-y|^{4}\nn\\
%\leq&(2a_i+(\rho-1)\sigma^2_i) |x-y|^{4},
%\end{align*}
%namely,
Assumption \ref{a5} holds  with $\beta_1=4\rho-2$, $\beta_2=3+\rho$ for any $\rho>0$.
 Let $\rho=0.1$,  then we know
$
\pi\alpha=\pi_1 \alpha_1+\pi_2 \alpha_2<0,~\pi \beta=\pi_1 \beta_1+\pi_2 \beta_2<0
$
holds  with $\gamma \in(0, 1.548]$. It follows  from Theorem \ref{yth3.1} that the  exact solutions  of  SDS \eqref{exyhf4.1} admit  a unique invariant measure $\mu\in \mathcal{P}(\RR \times \SS)$.
Moreover,  by Theorems \ref{yth3.2} and \ref{yth3.3}, the unique numerical invariant measure $\mu^{\t}\in \mathcal{P}(\RR \times \SS)$ of the truncated EM scheme exists and   converges to $\mu$ in the Wasserstein metric.

Next we begin to construct the explicit scheme to  approximate the underlying invariant measure of  SDS \eqref{exyhf4.1}.
For any $i\in \mathbb{S}$, compute
\begin{align*}
 \sup_{|x|\vee|y|\leq u, x\neq y}  \Big(\frac{ |f(x,i)-f(y,i)|}{|x-y|} \vee\frac{ |g(x,i)-g(y,i)|^2}{|x-y|^2}\Big)
%\leq&\sup_{|x|\vee|y|\leq u, x\neq y}\Big(  \frac{ |x-y|\big[|a_i|+|b_i|\big(|x|^2+|x||y|+|y|^2\big)\big]}{|x-y|}\vee \sigma^2_i  \Big)
\leq  (3|b_i|u^2+|a_i|)\vee \sigma^2_i, ~~~\forall ~ u\geq 1.
\end{align*}
Thus, for any $u\geq 1$, we choose
$ \bar{\varphi}_1(u)=3u^2+1, ~\bar{\varphi}_2(u)=2,$ which implies  $\bar{\varphi}_1^{-1}(u)=\big((u-1)/3\big)^{1/2},~ \forall~u\geq 4,~
\bar{\varphi}_2^{-1}(u)=+\infty,~~\forall~u\geq 2.$   Let $\bar{h}(\t)=6\t^{-0.4}.$
Clearly, (\ref{6Fe21}) and (\ref{6Fe22}) hold for any $\t\in (0,1]$. Then,
  we give the   truncated EM scheme
\begin{align}  \la{N-EME}
\left\{
\begin{array}{ll}
Z_0=x_0,~~r_0=\ell,&\\
\tilde{Z}_{k+1}=Z_k+[a(r_k)Z_{k}+b(r_k)(Z_{k})^3]\t +\sigma(r_k)Z_k\t B_k,~~~~~~~ \\
Z_{k+1}=\dis\Big(|\tilde{Z}_{k+1}|\wedge   \bar{\varphi}^{-1}_{r_{k+1}}(\bar{h}(\t)) \Big) \frac{\tilde{Z}_{k+1}}{|\tilde{Z}_{k+1}|}
\end{array}
\right.
\end{align}
for any $k=0, 1,\dots, \bar{N}-1.$
Let $\gamma=1.5$, $T=100$ and stepsize  $\triangle=10^{-4}$. We implement  Scheme \eqref{N-EME} in the numerical experiments.
 We simulate  $100$ paths by using  {M{\scriptsize ATLAB}}.
 On the  computer running at Intel Core i3-4170 CPU 3.70 GHz,  the  runtime of  the  truncated EM scheme \eqref{N-EME} is  about 22.137421 seconds while the the  runtime of  the backward EM scheme (\ref{yhfBE1}) is about 38.229964  seconds   on the same computer.
Thus we know that the speed of the  truncated EM scheme \eqref{N-EME} is    1.727 times faster than that of the backward EM scheme.
  Figure \ref{figure4} depicts 10   paths
  of the numerical solution of Scheme \eqref{N-EME}.
  Figure \ref{figure5}(a) depicts the path of the Markov chain,
Figure \ref{figure5}(b) further compares  the path of the exact solution $X(t)$ with that of  the  numerical solution $Z(t)$ while Figure \ref{figure5}(c) compares  the ECDF  of the  exact solution with that of the  numerical solution.  The similarity between the paths as well as the distributions  is significant.  Thus  the numerical invariant measure approximates  the underlying one very well.
\begin{figure}[!ht]%[!htbp]
  \centering
\includegraphics[width=13cm,height=6cm]{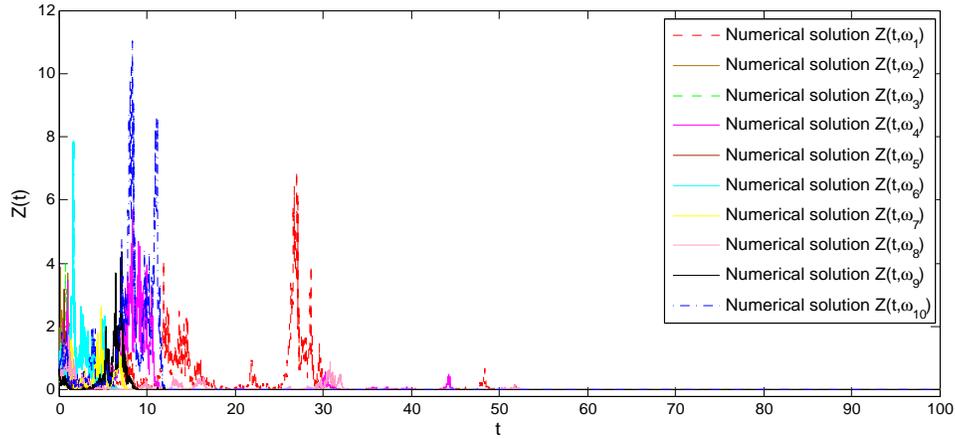}
  \caption{ 10 trajectories of the numerical solution of Scheme \eqref{N-EME} with $x_0=0.5$, $\ell=2$ and stepsize  $\triangle=10^{-4}$.}
  \label{figure4}
\end{figure}
\begin{figure}[!ht]%[!htbp]
  \centering%\label{fig:a}
\includegraphics[width=13cm,height=9cm]{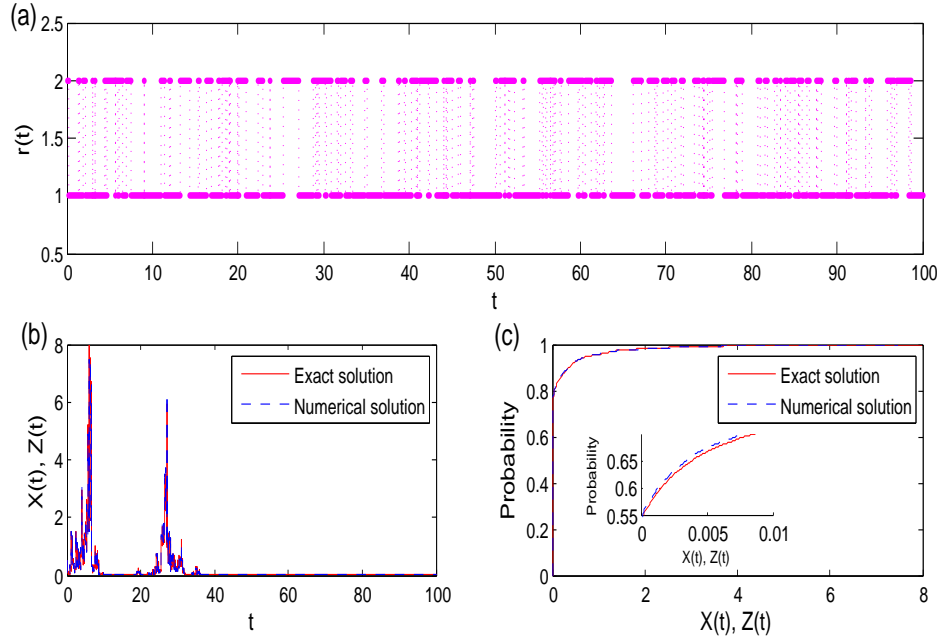}
  \caption{ (a)  Computer simulation of a sample path of  Markov chain $r(t)$.  (b) Sample paths of the exact solution (the red solid line) and numerical solution (the blue dashed line). (c) ECDFs for the exact solution (the red solid line) and numerical solution (the blue dashed line). }
  \label{figure5}
\end{figure}

}
\end{expl}

\section{Concluding Remarks}\la{s6}

This paper investigates the approximation methods for
 the SDSs without globally Lipschitz continuous  coefficients.
A novelty
is to construct two   explicit  schemes  approximating the dynamical properties of SDSs.   %based on the structure of nonlinear growth coefficients.
In finite horizon, by one scheme, we show the   boundedness of the numerical solutions, obtain  the convergence in the $p$th moment and estimate the rate of convergence. % Under very mild conditions the rate of convergence is   also ascertained.
 On the other hand, in infinite horizon we use the other  scheme
to approximate  the underlying invariant measure of exact solutions  in   the Wasserstein distance. Moreover, this scheme is also suitable to realize the stability of SDSs.
 Our exploiting schemes perform the dynamical behaviors of exact solutions very well but don't require more restrictions except the structure conditions which guarantee the exact solutions posses some propery such as stability, moment boundedness and ergodicity.
 Some simulation examples  are provided to support the theoretical results and demonstrate the validity of approaches.

\section*{Acknowledgements}

The authors
thank the editors and referee for the
helpful comments and suggestions.

\bibliography{jde_arxiv}

\begin{thebibliography}{10}
\expandafter\ifx\csname url\endcsname\relax
  \def\url#1{\texttt{#1}}\fi
\expandafter\ifx\csname urlprefix\endcsname\relax\def\urlprefix{URL }\fi
\expandafter\ifx\csname href\endcsname\relax
  \def\href#1#2{#2} \def\path#1{#1}\fi

\bibitem{Shen2013}
Y.~Shen, T.~K. Siu, Pricing variance swaps under a stochastic interest rate and
  volatility model with regime-switching, Oper. Res. Lett. 41~(2) (2013)
  180--187.

\bibitem{Mao06}
X.~Mao, C.~Yuan, Stochastic Differential Equations with Markovian Switching,
  Imperial College Press, 2006.

\bibitem{So}
M.~K.~P. So, K.~Lam, W.~K. Li, A stochastic volatility model with markov
  switching, J. Bus. Econom. Statist. 16~(2) (1998) 244--253.

\bibitem{Ta}
Y.~Takeuchi, N.~H. Du, N.~T. Hieu, K.~Sato, Evolution of predator-prey systems
  described by a lotka-volterra equation under random environment, J. Math.
  Anal. Appl. 323~(2) (2006) 938--957.

\bibitem{yz09}
G.~Yin, C.~Zhu, Hybrid Switching Diffusions: Properties and Applications, New
  York: Springer-Verlag, 2010.

\bibitem{Pin1}
M.~Pinsky, R.~G. Pinsky, Transience/recurrence and central limit theorem
  behavior for diffusions in random temporal environments, Ann. Probab. 21~(1)
  (1993) 433--452.

\bibitem{Pin2}
M.~Pinsky, M.~Scheutzow, Some remarks and examples concerning the transience
  and recurrence of random diffusions, Ann. Inst. H. Poincar$\acute{\hbox{e}}$
  Probab. Statist. 28~(4) (1992) 519--536.

\bibitem{Kloeden}
P.~E. Kloeden, E.~Platen, Numerical Solution of Stochastic Differential
  Equations, Springer-Verlag, Berlin, 1992.

\bibitem{Hutzenthaler15}
M.~Hutzenthaler, A.~Jentzen, Numerical approximations of stochastic
  differential equations with non-globally lipschitz continuous coefficients,
  Mem. Amer. Math. Soc. 236~(1112) (2015) 99 pp.

\bibitem{16}
M.~Hutzenthaler, A.~Jentzen, P.~E. Kloeden, Strong and weak divergence in
  finite time of euler's method for stochastic differential equations with
  non-globally lipschitz continuous coefficients, Proc. R. Soc. Lond. Ser. A
  Math. Phys. Eng. Sci. 467 (2011) 1563--1576.

\bibitem{Hi02}
D.~J. Higham, X.~Mao, A.~M. Stuart, Strong convergence of euler-type methods
  for nonlinear stochastic differential equations, SIAM J. Numer. Anal. 40~(3)
  (2002) 1041--1063.

\bibitem{Hutzenthaler12}
M.~Hutzenthaler, A.~Jentzen, P.~E. Kloeden, Strong convergence of an explicit
  numerical method for sdes with nonglobally lipschitz continuous coefficients,
  Ann. Appl. Probab. 22~(4) (2012) 1611--1641.

\bibitem{Sabanis13}
S.~Sabanis, A note on tamed euler approximations, Electron. Commun. Probab.
  18~(13) (2013) 1--10.

\bibitem{Sabanis16}
S.~Sabanis, Euler approximations with varying coefficients: the case of
  superlinearly growing diffusion coefficients, Ann. Appl. Probab. 26~(4)
  (2016) 2083--2105.

\bibitem{Wang13}
X.~Wang, S.~Gan, The tamed milstein method for commutative stochastic
  differential equations with non-globally lipschitz continuous coefficients,
  J. Difference Equ. Appl. 19~(3) (2013) 466--490.

\bibitem{LiuW13}
W.~Liu, X.~Mao, Strong convergence of the stopped euler-maruyama method for
  nonlinear stochastic differential equations, Appl. Math. Comput. 223~(4)
  (2013) 389--400.

\bibitem{Mao20152}
X.~Mao, The truncated euler-maruyama method for stochastic differential
  equations, J. Comput. Appl. Math. 290 (2015) 370--384.

\bibitem{James}
J.~D. Hamilton, A new approach to the economic analysis of nonstationary time
  series and the business cycle, Econometrica 57~(2) (1989) 357--384.

\bibitem{Goard}
J.~Goard, M.~Mazur, Stochastic volatility models and the pricing of vix
  options, Math. Finance 23~(3) (2013) 439--458.

\bibitem{Khasminskii2007}
R.~Z. Khasminskii, C.~Zhu, G.~Yin, Stability of regime-switching diffusions,
  Stochastic Process. Appl. 117~(8) (2007) 1037--1051.

\bibitem{shao}
J.~Shao, F.~Xi, Stability and recurrence of regime-switching diffusion
  processes, SIAM J. Control Optim. 52~(6) (2014) 3496--3516.

\bibitem{Bakhtin12}
Y.~Bakhtin, T.~Hurth, Invariant densities for dynamical systems with random
  switching, Nonlinearity 25~(10) (2012) 2937--2952.

\bibitem{Bakhtin15}
Y.~Bakhtin, T.~Hurth, J.~C. Mattingly, Regularity of invariant densities for 1d
  systems with random switching, Nonlinearity 28~(11) (2015) 3755--3787.

\bibitem{Shao15}
J.~Shao, Criteria for transience and recurrence of regime-switching diffusion
  processes, Electron. J. Probab. 20~(63) (2015) 15 pp.

\bibitem{Higham07}
D.~J. Higham, X.~Mao, C.~Yuan, Preserving exponential mean-square stability in
  the simulation of hybrid stochastic differential equations, Numer. Math.
  108~(2) (2007) 295--325.

\bibitem{Mao2011}
X.~Mao, Y.~Shen, A.~Gray, Almost sure exponential stability of backward
  euler-maruyama discretizations for hybrid stochastic differential equations,
  J. Comput. Appl. Math. 235~(5) (2011) 1213--1226.

\bibitem{Mao2005}
X.~Mao, C.~Yuan, G.~Yin, Numerical method for stationary distribution of
  stochastic differential equations with markovian switching, J. Comput. Appl.
  Math. 174 (2005) 1--27.

\bibitem{Yuan3}
C.~Yuan, X.~Mao, Stationary distributions of euler-maruyama-type stochastic
  difference equations with markovian switching and their convergence, J.
  Difference Equ. Appl. 11~(1) (2005) 29--48.

\bibitem{Bao2016}
J.~Bao, J.~Shao, C.~Yuan, Approximation of invariant measures for
  regime-switching diffusions, Potential Anal. 44~(4) (2016) 707--727.

\bibitem{Nguyen17}
S.~L. Nguyen, T.~A. Hoang, D.~T. Nguyen, G.~Yin, Milstein-type procedures for
  numerical solutions of stochastic differential equations with markovian
  switching, SIAM J. Numer. Anal. 55~(2) (2017) 953--979.

\bibitem{Zhou2015}
S.~Zhou, Strong convergence and stability of backward euler-maruyama scheme for
  highly nonlinear hybrid stochastic differential delay equation, Calcolo
  52~(4) (2015) 445--473.

\bibitem{Chen2007}
M.~Chen, Y.~Mao, An Introduction of Stochastic Processes, Higher Education
  Press, 2007.

\bibitem{Liu}
W.~Liu, X.~Mao, Numerical stationary distribution and its convergence for
  nonlinear stochastic differential equations, J. Comput. Appl. Math. 276
  (2015) 16--29.

\bibitem{Shao2015}
J.~Shao, Ergodicity of regime-switching diffusions in wasserstein distances,
  Stoch. Proc. Appl. 125~(2) (2015) 739--758.

\bibitem{Ba}
J.~B. Bardet, H.~Gu\'{e}rin, F.~Malrieu, Long time behavior of diffusion with
  markov switching, ALEA Lat. Am. J. Probab. Math. Stat.~(7) (2010) 151--170.

\bibitem{Chen2004}
M.~Chen, From Markov Chains to Non-equillibrium Particle Systems, World
  Scientific Publishing Co. Pte. Ltd., Singapore, 2004.

\bibitem{Higham2007}
D.~J. Higham, X.~Mao, C.~Yuan, Almost sure and moment exponential stability in
  the numerical simulation of stochastic differential equations, SIAM J. Numer.
  Anal. 45~(2) (2007) 592--609.

\bibitem{Massey}
F.~J. Massey, The kolmogorov-smirnov test for goodness of fit, J. Amer.
  Statist. Assoc. 46~(253) (1951) 68--78.

\end{thebibliography}

\end{document}